\newtheorem{theorem}{Theorem}[section]
\newtheorem{lemma}[theorem]{Lemma}
\newtheorem{remark}[theorem]{Remark}
\newtheorem{example}[theorem]{Example}
\newtheorem{proposition}[theorem]{Proposition}
\newtheorem{corollary}[theorem]{Corollary}
\newcommand{\NN}{\mathbb{N}}
\newcommand{\RR}{\mathbb{R}}
\newcommand{\LL}{\mathbb{L}}
\newcommand{\cB}{\mathcal{B}}
\newcommand{\cE}{\mathcal{E}}
\newcommand{\cL}{\mathcal{L}}
\newcommand{\bN}{\mathbb{N}}
\newcommand{\bR}{\mathbb{R}}
\newcommand{\bC}{\mathbb{C}}
\newcommand{\law}{\mathcal{L}}
\begin{document}
\title{On exponential functionals of L\'evy processes}
\author{Anita Behme\thanks{Technische Universit\"at M\"unchen, Institut f\"ur Mathematische Statistik, Boltzmannstra\ss e 3, D-85748 Garching bei M\"unchen,
Germany, email: a.behme@tum.de, tel.: +49/89/28917424, fax:+49/89/28917435}$\,$  and Alexander Lindner\thanks{Technische Universit\"at Braunschweig, Institut f\"ur
Mathematische Stochastik, Pockelsstr. 14, D-38106
Braunschweig, Germany, email: a.lindner@tu-bs.de,
tel.:+49/531/3917575, fax:+49/531/3917564}
}
\date{\today}
\maketitle

\begin{abstract}
Exponential functionals of L\'evy processes appear as stationary distributions of generalized Ornstein-Uhlenbeck (GOU) processes.
In this paper we obtain the infinitesimal generator of the GOU process and show that it is a Feller process.
Further we use these results to investigate properties of the mapping $\Phi$, which maps two independent L\'evy processes to their
corresponding exponential functional, where one of the processes is assumed to be fixed.
We show that in many cases this mapping is injective and give the inverse mapping in terms of (L\'evy) characteristics. Also,
continuity of $\Phi$ is treated and some results on its range are obtained. \end{abstract}

2000 {\sl Mathematics subject classification.} 60G10, 60G51, 60J35.\\
{\sl Key words and phrases.}  generalized Ornstein-Uhlenbeck process, L\'evy process, Feller process,
 infinitesimal generator, integral mapping, stationarity

\section{Introduction}\label{S1}
\setcounter{equation}{0}

The {\it exponential functional} of a bivariate L\'evy process $(\xi,\eta)^T = ((\xi_t,\eta_t)^T)_{t\geq 0}$ is defined as
\begin{equation} \label{eq-integral}
V_\infty = \int_{(0,\infty)}e^{-\xi_{t-}}d\eta_t.
\end{equation}
Necessary and sufficient conditions for the convergence of integrals of the
form $\int_{(0,t]}e^{-\xi_{s-}}d\eta_s$ as $t\to \infty$ for a bivariate L\'evy process $(\xi,\eta)^T$
were given by Erickson and Maller~\cite[Thm. 2]{ericksonmaller05}.
Distributional properties of exponential functionals have been studied in
various articles throughout the years by e.g. Paulsen \cite{paulsen93}, Yor \cite{yor}, Bertoin et al.~\cite{bertoinlindnermaller08}, Kondo et al.~\cite{kondomaejimasato06}, Lindner
and Sato \cite{lindnersato09}, Behme \cite{behme2011} and Kuznetsov et al.~\cite{savovetal} to name just a few.

Denote by $\cL(X)$ the law of a random variable $X$. In this paper, for a given one-dimensional L\'evy process $\xi$,  we will consider mappings like
\begin{eqnarray*}
\Phi_\xi : D_\xi  & \to & \mbox{set of probability distributions on $\bR$} ,\\
\cL(\eta_1) & \mapsto & \cL \left( \int_0^\infty e^{-\xi_{s-}} \, d\eta_s \right)
\end{eqnarray*}
defined on $D_\xi := \{ \cL (\eta_1) : \eta$ L\'evy process, independent of $\xi,$ such that $\int_0^\infty e^{-\xi_{s-}} \,
d\eta_s $ converges a.s.$\}$
and we will examine injectivity and continuity of such mappings and gather information about their ranges.
 In the case that $\xi_t=at$ is deterministic, it is well known that $D_\xi=\mbox{ID}_{\log}(\RR)$
is the set of real-valued infinitely divisible distributions with finite $\mbox{log}^+$-moment and that
$\Phi_\xi$ is an algebraic isomorphism between $\mbox{ID}_{\log}(\RR)$ and $L(\RR)$, the set of real-valued
selfdecomposable distributions \cite[Prop. 3.6.10]{jurekmason}.

We start with a short example of a special case to illustrate the kind of results we obtain, as well as the occuring problems.

\begin{example} \label{example-start}
Suppose $(\xi_t)_{t\geq 0}$ is a compound Poisson process with
intensity rate $\lambda$ and jump heights measure $\tau$. Let $\eta$
be a L\'evy process independent of $\xi$ such that $\cL(\eta_1)\in D_\xi$. Define $T_i$ to be
the time of the $i$th jump of $\xi$ with $T_0 := 0$. Then
$$V_\infty=\int_{(0,\infty)} e^{-\xi_{s-}} \, d\eta_s  = \sum_{i=0}^\infty \int_{(T_i,T_{i+1}]}e^{-\xi_{T_i}}d\eta_t =
 \sum_{i=0}^\infty \left(\prod_{k=1}^i e^{-\Delta \xi_{T_k}} \right) (\eta_{T_{i+1}}-\eta_{T_i}).$$
Since $(e^{-\Delta \xi_{T_i}},
\eta_{T_{i+1}}-\eta_{T_i})_{i=0,1,2,\ldots}$ is an i.i.d. sequence,
as e.g. in \cite{behme-et-al} we obtain from this the distributional fixed point equation
$$V_\infty\overset{d}= X V_\infty' + H$$
where $(X,H)\overset{d}= (e^{-\Delta \xi_{T_i}},\eta_{T_{i+1}}-\eta_{T_i})$ for $i=1,2,\ldots$ and $V_\infty\overset{d}=V_\infty'$ where $V_\infty'$ is independent of $(X,H)$.
In terms of characteristic functions this yields $\phi_{V_\infty}(u)= \phi_{XV_\infty'}(u)\phi_H(u)$ and
adding the fact that the characteristic function $\phi_\eta$ of the
L\'evy process $(\eta_t)_{t\geq 0}$ and the corresponding exponentially subordinated process $(H_t)_{t\geq 0}=(\eta_{\tau(t)})_{t\geq 0}$ with $\tau\sim\mbox{Exp}(\lambda)$
fulfill the equation
$$\phi_H(u)=\frac{\lambda}{\lambda -\log (\phi_\eta(u))}$$
(see e.g. \cite[p.10]{steutelvanharn}) we have
\begin{equation} \label{eq-motivation}
\log(\phi_\eta(u)) \phi_{V_\infty}(u)= \lambda \left( \phi_{V_\infty}(u)- \phi_{XV_\infty'}(u)\right)=
\lambda \int_{\RR} \left(E\left[e^{iuV_\infty} \right]- E\left[e^{iue^{-y}V_\infty} \right] \right) \tau(dy).
\end{equation}
\end{example}

Now, in the setting of the example {\it if we knew} that the characteristic function of $V_\infty$ is non-zero on a dense subset of $\RR$
this gave us a formula for the characteristic exponent of $\eta$ and thus injectivity of the mapping $\Phi_\xi$.
But in general the quotient of two characteristic functions does not necessarily yield a unique
solution as has already been remarked in \cite{loeve}. Examples for non-uniqueness of such quotients are also given in \cite{levy61}.

To obtain formulas like \eqref{eq-motivation} for general L\'evy processes $(\xi, \eta)^T$ we will strongly make use of the fact
that GOU processes are Markov processes. So, in Section \ref{sec-markov} we first compute the infinitesimal generator
of the GOU process and show that it is actually a Feller process. In Section \ref{sec-relations} these results
will be used to obtain formulas of the form \eqref{eq-motivation} for general, independent L\'evy processes $\xi$ and $\eta$.
Hereby we obtain a general formula for
$\log(\phi_{\eta}(u)) \phi_{V_\infty}(u)$ in terms of the characteristic triplet of $\xi$ and $\cL(V_\infty)$ as
given in Theorem \ref{thm-generatorlimit} and Corollary \ref{cor-generatorlimit} and on the other hand in Theorem \ref{thm-generatorlimit-2} we express
$\log(\phi_{-\xi}(u)) \phi_{\log|V_\infty|}(u)$ in terms of the characteristic triplet of $\eta$ and $\cL(V_\infty)$.\\
Further, Section \ref{sec-inj} is devoted to the study of injectivity, which - in view of the results of Section \ref{sec-relations} -
now reduces to an examination of when either $\phi_{V_\infty}(u)$ or $\phi_{\log|V_\infty|}(u)$ are non-zero on a dense subset of $\RR$.
We give various examples of when the mapping $\Phi_\xi$ or its counterpart $\tilde{\Phi}_\eta$ (which maps $\cL(\xi_1)$ to
$\cL(V_\infty)$ for  $\eta$ fixed) are injective and argue why injectivity cannot be obtained if $\xi$ and $\eta$ are
allowed to exhibit a dependence structure.\\
Section \ref{sec-ranges} then uses the previous results to obtain information on the ranges of $\Phi_\xi$ and $\tilde{\Phi}_\eta$. In
particular, Theorem \ref{thm-nonormal} shows that centered Gaussian distributions can only be obtained in the setting of (standard) OU processes,
i.e. for $\xi$ being deterministic and $\eta$ being a Brownian motion.\\
Finally, in Section \ref{sec-cont} we give conditions for continuity (in a weak sense) of the mappings  $\Phi_\xi$ and $\tilde{\Phi}_\eta$ and
give an example of $\Phi_\xi$ being not continuous.

\section{Some background on GOU processes and Notations}
\setcounter{equation}{0}

By the L\'evy-Khintchine formula (e.g. \cite[Thm. 8.1]{sato}) the {\it characteristic exponent} of an $\RR^d$-valued L\'evy process $X=(X_t)_{t\geq 0}$ is given by
\begin{eqnarray*}
 \psi_X(u)&:=&\log \phi_X(u) := \log E\left[e^{i\langle u, X_1 \rangle } \right]\\
&=& i \langle \gamma_X, u\rangle - \frac{1}{2} \langle u, A_X u  \rangle + \int_{\RR^d} (e^{i \langle u, x \rangle} -1 -i \langle u, x \rangle \mathds{1}_{|x|\leq 1}) \nu_X(dx)
\end{eqnarray*}
where $(\gamma_X, A_X, \nu_X)$ is the {\it characteristic triplet} of $X$. In case that $X$ is real valued we will usually replace $A_X$ by $\sigma^2_X$.
To simplify notations, we set $\nu(\{0\})=0$ for any L\'evy measure $\nu$.
If the L\'evy measure $\nu_X$ satisfies the condition $\int_{|x|\leq 1} |x|\nu_X(dx)<\infty$ we may also use the L\'evy-Khintchine formula in the form
\begin{eqnarray*}
 \psi_X(u)&=& i \langle \gamma^0_X, u\rangle - \frac{1}{2} \langle u, A_X u  \rangle +
\int_{\RR^d} (e^{i \langle u, x \rangle} -1 ) \nu_X(dx)
\end{eqnarray*}
and call $\gamma_X^0$ the {\it drift} of $X$. We refer to \cite{sato} for any further information on L\'evy processes. We write $\Delta Y_t = Y_t - Y_{t-}$ for
any c\`adl\`ag process $Y$.

Given a bivariate L\'evy process $((\xi_t,\eta_t)^T)_{t\geq 0}$ and
a random variable $V_0$ on the same probability space,
\begin{equation} V_t=e^{-\xi_t} \left( \int_0^t e^{\xi_{s-}}d\eta_s +V_0  \right),\quad t\geq 0,
\label{GOUdef} \end{equation}
defines the {\it generalized Ornstein-Uhlenbeck (GOU) process driven by $(\xi,\eta)^T$ with starting random variable $V_0$}.
In the case that $\xi_t=at$ is deterministic, the process $V_t$ is
usually called {\it Ornstein-Uhlenbeck-type process}, while if $(\xi_t, \eta_t)=(at,B_t)$ for $B$ a Brownian motion, $V_t$ is known as
{\it Ornstein-Uhlenbeck (OU) process}.

The GOU process driven by $(\xi,\eta)^T$ is the unique solution of
the stochastic differential equation
\begin{equation}\label{SDEGOU}
dV_t=V_{t-}dU_t+dL_t,\quad t\geq 0, \end{equation} for the bivariate
L\'evy process $((U_t,L_t)^T)_{t\geq 0}$ given by
\begin{equation} \label{eq-def-UL}
\left( \begin{array}{c} U_t \\ L_t  \end{array} \right) = \left(
\begin{array}{l} -\xi_t + \sum_{0< s \leq t} \left(e^{- \Delta \xi_s}
-1 + \Delta \xi_{s}\right) + t \,\sigma_{\xi}^2/2 \\
 \eta_t + \sum_{0 < s \leq t} (e^{-\Delta \xi_s}-1)
\Delta \eta_s  - t\,\sigma_{\xi,\eta}  \end{array}\right),\quad
t\geq 0,
\end{equation}
where $\sigma^2_{\xi}$ and $\sigma_{\xi,\eta}$ denote the $(1,1)$
and $(1,2)$ elements of the Gaussian covariance matrix
$A_{(\xi,\eta)}$. Equation \eqref{eq-def-UL} defines a bijection
between all bivariate L\'evy processes $(\xi,\eta)^T$ and all
bivariate L\'evy process $(U,L)^T$ such that $\nu_U((-\infty,-1]) =
0$. The upper line of \eqref{eq-def-UL} is equivalent to $e^{-\xi_t}
= \cE(U)_t$, where $\cE(U)_t$ is the stochastic exponential of
$U$, which is defined as the unique c\`adl\`ag solution $S$ of $S_t = 1 +
\int_{(0,t]} S_{s-} dU_s$ (see e.g. \cite[Thm.~II.37]{protter}). Equation \eqref{SDEGOU} has a solution
for any bivariate L\'evy process $(U,L)^T$ and any starting random
variable $V_0$ independent of $(U,L)^T$, which in the case
$\nu_U(\{-1\}) = 0$ is given by
\begin{equation} \label{GOUdef2}
V_t = \cE(U)_t \left( \int_{(0,t]} \cE(U)_{s-}^{-1} \, d\eta_s +
V_0\right),
\end{equation}
where $\eta_t = L_t - \sum_{0<s\leq t} (1+\Delta U_s)^{-1} \Delta
U_s \Delta L_s  - t\sigma_{U,L}$, see \cite[Thm.
2.1]{behmelindnermaller11}. The processes of the form
\eqref{GOUdef2} hence constitute a slightly larger class of
stochastic processes than the GOU processes defined by
\eqref{GOUdef} since they allow $U$ also to have jumps smaller than
$-1$. Obviously, the GOU process defined in \eqref{GOUdef} as well
as the process defined in \eqref{GOUdef2} are time homogeneous
Markov processes \cite[Lem. 3.3]{behmelindnermaller11}.

In \cite{lindnermaller05} necessary and
sufficient conditions for the existence of causal, strictly stationary solutions of
the generalized Ornstein-Uhlenbeck process \eqref{GOUdef} are given. In particular it is shown (\cite[Thm. 2.1]{lindnermaller05})
that if $(V_t)_{t\geq 0}$ is strictly stationary and causal, then $\int_{(0,t]} e^{-\xi_{s-}}dL_s$ with $L$ as defined in \eqref{eq-def-UL}
converges a.s. to a finite random variable as $t\to \infty$ and the stationary law $\mu$ is given by $\mu=\cL(V_\infty)$
for $V_\infty = \int_{(0,\infty)}e^{-\xi_{s-}}dL_s.$ Observe that $L=\eta$ if $\xi$ and $\eta$ are independent.

The space of continuous functions $\RR^d\to \RR$ is denoted by $C(\RR^d)$. The subspaces of bounded functions, functions vanishing at infinity
and functions with compact support are written as $C_b(\RR^d)$, $C_0(\RR^d)$ and $C_c(\RR^d)$, resp. For $n\in \NN$ we write $C^n(\RR^d)$
for the space of functions which are
$n$-times continuously differentiable. Functions in $C^n_b(\RR^d)$ are $n$-times continuously differentiable and the first $n$ derivatives are bounded.
 $C^n_0(\RR^d)$ and $C^n_c(\RR^d)$ are defined likewise. For any bounded function $f$ we let $\|f\|=\|f\|_\infty$ denote its supremum norm. We write ``$\stackrel{d}{=}$'' to denote equality in distribution of random
 variables, ``$\stackrel{d}{\to}$'' to denote convergence in
 distribution of random variables,
 {\it i.i.d.} for ``independent and identically distributed'', and  $\log^+ (x) = \log (\max\{x,1\})$ for $x\in \bR$. Throughout, the
 characteristic function of a random variable $X$ is denoted by $\phi_X(u) = E e^{iuX}$, $u\in
 \mathbb{R}$, and the Fourier transform of a finite measure $\mu$ on
 $(\bR,\cB_1)$ by $\widehat{\mu}(u) = \int_{\bR} e^{iux} \,
 \mu(dx)$. Here, $\cB_1$ denotes the Borel-$\sigma$-algebra  in
 $\bR$.

\section{Feller property and the infinitesimal generator of the GOU process}\setcounter{equation}{0}
\label{sec-markov}

Let $(X_t)_{t\geq 0}$ be a time homogeneous
Markov process on $\RR^d$ with semigroup $T_t$, i.e.
$$T_tf(x)=\int_{\RR^d} f(y)\mu_t(x,dy)=E^x[f(X_t)]$$
where $\mu_t(x,dy)=P(X_t\in dy|X_0=x)$ are the transition probabilities of $X$ and $f\in C_0(\RR^d)$.
Then $X$ is a {\it Feller process} in $\RR^d$ if its semigroup fulfills the {\it Feller properties}
\begin{eqnarray*}
 &\mbox{(F1)}& \quad T_t C_0(\RR^d) \subset C_0(\RR^d)\\
&\mbox{(F2)}& \quad T_tf \to f \mbox{ as } t\to 0 \quad \forall f\in
C_0(\RR^d),
\end{eqnarray*}
where the convergence under (F2) is meant to hold in the Banach space $(C_0(\RR^d), \|\cdot\|_\infty)$.

The {\it infinitesimal generator} $A^X$ of a Feller process $X$ is defined by
$$A^Xf=\lim_{t\to 0} \frac{T_tf-f}{t}$$
for all functions $f$ in the domain of $A^X$, i.e. all $f$ in
$$D(A^X)=\left\{f\in C_0(\RR^d), \lim_{t\to 0} \frac{T_tf-f}{t}
\mbox{ exists in } \|\cdot \|_\infty  \right\}.$$ A subspace $D$ of
$D(A^X)$ is said to be a {\it core} for the generator $A^X$, if the
closure of the restriction of $A^X$ to $D$ is equal to $A^X$.

Every L\'evy process $X$ is a Feller process. If the L\'evy process $X$ is real-valued its generator $A^X$ is given by (e.g. \cite[Thm. 31.5]{sato})
\begin{equation} \label{generatorlevy}
 A^X f(x)= \frac{1}{2}\sigma_X^2 f''(x) + \gamma_X f'(x) +
\int_{\RR} (f(x+y)-f(x)- yf'(x)\mathds{1}_{|y|\leq 1} )\nu_X(dy)
\end{equation}
and it holds $C_0^2(\RR)\subset D(A^X)$.

The generator of the OU process is well known in the literature, unlike the
generator of the GOU process, which is presented in the next theorem. For
L\'evy processes with finite second moment this generator is also
given in \cite[Thm. 4.6.1]{kolokoltsov} and the formula for the
generator can also be found in
 \cite[Exercise V.7]{protter} (containing a typo). The fact that GOU processes are Feller processes and the
 determination of the cores seems to be new.\\
Note that the equation $dV_t^x = x + \int_{(0,t]} V_{s-}^x \, dU_s +
dL_t$ can be written as \begin{equation} \label{eq-Schilling} dV_t^x
= x + \int_{(0,t]} g(V_{s-}^x) \, d(U_s,L_s)^T \end{equation} with
$g(u) = (u,1) \in \bR^{1\times 2}$. Solutions of
\eqref{eq-Schilling} with {\it bounded} and locally Lipschitz $g$
are well known to constitute Feller process (e.g. \cite[Cor.
3.3]{schillingschnurr}), but the function $u \mapsto (u,1)$ is not
bounded so that this theory cannot be applied. Further, in
\cite[Rem. 3.4]{schillingschnurr}  an example is given when $g$ is
not bounded and the corresponding solution fails to be a Feller
process.

\begin{theorem} \label{thm-generator-GOU}
 Let $(Z_t)_{t\geq 0}=((U_t,L_t)^T)_{t\geq 0}$ be a bivariate L\'evy process with characteristic triplet $(\gamma_Z,
A_Z, \nu_Z)$ where $\gamma_Z=(\gamma_U,\gamma_L)^T$,
$A_Z=\begin{pmatrix} \sigma_U^2 & \sigma_{U,L} \\ \sigma_{U,L} &
\sigma_L^2 \end{pmatrix}$ and $\nu_Z((dz_1,dz_2)^T)$ such that
$\nu_Z((-1,dz_2)^T)=0$. Then the process $(V_t^x)_{t\geq 0}$ defined by
\begin{equation} \label{GOUSDE-matrix}
 V^x_t=x+ \int_{(0,t]}  V_{s-}^x\, dU_s +L_t = x+\int_{(0,t]} g(V_{s-}^x)dZ_s,
 \quad t\geq 0,
\end{equation}
for $g(u)=(u,1)$
is a Feller process whose generator $A^V$ has a domain containing
$$S(\RR):=\left\{f\in C_0^2(\RR) : \lim_{|x|\to\infty} \left(|x f'(x)|+ |x^2 f''(x)|\right)=0 \right\}.$$
In particular $C_c^\infty (\bR) \subset C_c^2(\RR)\subset D(A^V)$.
For any $f\in S(\RR)$ the generator can be written as
\begin{eqnarray}
 A^Vf(x)&=& f'(x) g(x) \gamma_Z + \frac{1}{2} f''(x)\left(g(x) A_Z g(x)^T\right) \label{generatorGOU-UL-matrix} \nonumber \\
&& + \int_{\RR^2} \left( f(x+g(x)z)-f(x)-f'(x)g(x)z\mathds{1}_{|z|\leq 1}\right) \nu_Z(dz)\nonumber\\
&=& f'(x)(x\gamma_U+\gamma_L) + \frac{1}{2} f''(x)(x^2\sigma_U^2 + 2x\sigma_{U,L}+\sigma_L^2) \label{generatorGOU-UL}\\
&& + \int_{\RR^2} (f(x+xz_1+z_2) - f(x) - f'(x)(xz_1
+z_2)\mathds{1}_{|z|\leq 1} ) \nu_{U,L}(dz_1,dz_2). \nonumber
\end{eqnarray}
The spaces $S(\RR)$, $C_c^2(\RR)$ and $C_c^\infty(\bR)$ are cores for $A^V$.
\end{theorem}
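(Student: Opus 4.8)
The plan is to break the statement into three tasks: (i) that $(V^x_t)$ is a well-defined Feller process, (ii) that $S(\RR) \subset D(A^V)$ with the claimed formula for $A^V f$, and (iii) that $S(\RR)$, $C_c^2(\RR)$ and $C_c^\infty(\RR)$ are cores. First I would recall that existence and uniqueness of the solution of \eqref{GOUSDE-matrix} is classical (it is a linear SDE driven by a L\'evy process, and the explicit solution \eqref{GOUdef2} is available since $\nu_U(\{-1\})=0$), and that the solution is a time-homogeneous Markov process by \cite[Lem. 3.3]{behmelindnermaller11}. The work is to check (F1) and (F2). For (F2), continuity at $t=0$: for $f\in C_0(\RR)$ one estimates $\|T_tf - f\|_\infty$ by splitting into $|x|\le R$ and $|x|>R$; on the bounded part one uses that $V^x_t \to x$ in probability as $t\to 0$ uniformly in $x$ on compacts (from the explicit solution and the c\`adl\`ag property of $(U,L)$ at $0$), and on the unbounded part one uses that $f$ vanishes at infinity together with a uniform-in-$x$ bound showing $V^x_t$ stays large when $x$ is large and $t$ is small. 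For (F1), that $T_t$ preserves $C_0(\RR)$, the key point is continuity of $x\mapsto V^x_t$ in an $L^p$ or in-probability sense, which follows from Gronwall-type estimates for the linear SDE, giving $T_tf\in C_b$; and the vanishing-at-infinity part again follows because $|V^x_t|$ is large (with high probability) when $|x|$ is large, using $V^x_t = x\,\cE(U)_t + (\text{term independent of }x)$.

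Next, for the generator on $S(\RR)$, I would apply It\^o's formula for semimartingales (e.g. \cite[Thm.~II.32]{protter}) to $f(V^x_t)$, using $dV^x_s = g(V^x_{s-})\,dZ_s$. This yields $f(V^x_t) - f(x) = \int_0^t f'(V^x_{s-}) g(V^x_{s-})\,dZ^{\mathrm c}_s + \tfrac12\int_0^t f''(V^x_{s-})\, g(V^x_{s-})A_Z g(V^x_{s-})^T\,ds + \sum_{0<s\le t}\big(f(V^x_s)-f(V^x_{s-}) - f'(V^x_{s-})\Delta V^x_s\big) + (\text{drift terms})$. Taking expectations, dividing by $t$, letting $t\to0$, and using dominated convergence, the limit is the right-hand side of \eqref{generatorGOU-UL} evaluated at $x$ (since $V^x_{0-}=x$). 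This is exactly the place where the definition of $S(\RR)$ earns its keep: the integrand $f(x+xz_1+z_2) - f(x) - f'(x)(xz_1+z_2)\mathds{1}_{|z|\le1}$ must be $\nu_Z$-integrable and the various error terms must be dominated uniformly for $x$ in a neighbourhood and $s\in[0,t]$; the decay conditions $|xf'(x)|\to0$ and $|x^2 f''(x)|\to 0$ are precisely what controls the quadratic-in-$x$ growth coming from $g(x)=(x,1)$, both in the small-jump compensated integral (second-order Taylor estimate gives a bound $\tfrac12\|(x f'')(\cdot)\|$-type term near $x$, but one needs uniformity out to infinity for the next step) and in ensuring $A^V f \in C_0(\RR)$. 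One must also verify $A^V f \in C_0(\RR)$, i.e. that the expression in \eqref{generatorGOU-UL} vanishes as $|x|\to\infty$: the polynomial prefactors $x\gamma_U$, $x^2\sigma_U^2$, $2x\sigma_{U,L}$ are killed by $f'(x)$, $f''(x)$ with the required rates, and the integral term is handled by dominated convergence after the substitution $y = xz_1+z_2$, again using the $S(\RR)$ decay to bound the integrand by something $\nu_Z$-integrable uniformly in $x$. Once $A^V f$ is shown to lie in $C_0(\RR)$ and the $L^1$-limit is identified, a standard argument upgrades it to a $\|\cdot\|_\infty$-limit, so $f\in D(A^V)$.

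Finally, for the core assertions, I would invoke the standard criterion (e.g. via \cite[Prop. 1.3.3]{ethierkurtz} or the Hille--Yosida/core lemma): a subspace $D\subset D(A^V)$ that is dense in $C_0(\RR)$ and invariant under the semigroup $(T_t)$ is a core. Density in $C_0(\RR)$ is clear for $C_c^\infty(\RR)$ (hence for the larger $C_c^2(\RR)$ and $S(\RR)$, all of which sit between $C_c^\infty$ and $C_0$). Semigroup invariance is more delicate and is likely the subtlest point after the generator computation; the cleanest route is to avoid it by using the alternative criterion that $D$ is a core if it is dense in $D(A^V)$ in the graph norm $\|f\| + \|A^V f\|$, or by a direct mollification argument: given $f\in S(\RR)$ one can approximate it in the graph norm by functions in $C_c^\infty(\RR)$ (truncate with a smooth cutoff that is $1$ on $[-R,R]$ and control the extra terms using the $S(\RR)$ decay so that $\|A^V f_R - A^V f\|_\infty\to 0$), which simultaneously shows $C_c^\infty(\RR)$ is a core and that it is dense in the graph closure of $S(\RR)$. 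I expect the main obstacle to be the uniform-in-$x$ estimates needed both for the Feller property (F1) and for the interchange of limit and expectation in the generator computation, precisely because $g$ is unbounded — this is the difficulty flagged in the discussion preceding the theorem, and it is resolved by the affine structure $V^x_t = x\,\cE(U)_t + R_t$ with $R_t$ independent of $x$, which makes all the $x$-dependence explicit and linear.
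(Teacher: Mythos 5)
Your treatment of the Feller property and of the generator formula on $S(\RR)$ is essentially the paper's argument: the explicit affine representation $V_t^x = x\,\cE(U)_t + R_t$ gives both (F1) and the vanishing at infinity, It\^o's formula plus the compensation formula and a second-order Taylor bound (this is exactly where the decay conditions defining $S(\RR)$ enter) give the pointwise limit, and the upgrade to uniform convergence is the standard fact that for a Feller process a pointwise limit of $(T_tf-f)/t$ lying in $C_0(\RR)$ is automatically a uniform limit. Your direct $\varepsilon$-$R$ argument for (F2) differs from the paper, which instead deduces (F2) from (F1) together with right-continuity of the paths at $0$ via a general result of Liggett, but either route works.

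The core assertion is where your proposal has a genuine gap. Your ``cleanest route'' — approximating $f\in S(\RR)$ in the graph norm by truncations $f_R\in C_c^\infty(\RR)$ — only shows that $C_c^\infty(\RR)$, $C_c^2(\RR)$ and $S(\RR)$ all have the \emph{same} graph closure; it does not show that this common closure is all of $A^V$, which is what ``core'' means. (This truncation step is exactly part (vi) of the paper's proof, but there it is used only to pass the core property \emph{from} $S(\RR)$ \emph{down to} $C_c^2$ and $C_c^\infty$, after $S(\RR)$ has already been shown to be a core.) To establish that $S(\RR)$ is a core you cannot avoid the hard step. The paper does it in two stages: first, under the extra assumption $EU_1^2+EL_1^2<\infty$, it verifies the semigroup invariance $T_tS(\RR)\subset S(\RR)$ by differentiating $E[f(A_tx+B_t)]$ twice under the expectation (this is precisely where the second moments of $A_t=\cE(U)_t$ and $B_t$ are needed) and then applies the dense-plus-invariant criterion; second, it removes the moment assumption by splitting the generator as $G_0+W$ with $W$ the bounded operator coming from the jumps of size $>1$, using the first stage for the truncated process, and invoking the Hille--Yosida theorem together with a Kato perturbation argument to conclude $\overline{G_0+W}=A^V$. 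Neither the invariance computation (which fails without moments) nor the perturbation step appears in your plan, so as written the core claim is unproved.
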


\begin{proof} $\,$\\
(i) Let us
first establish the Feller property. It is well known that $V_t^x$
is a time homogenous Markov process (e.g. \cite[Lem.
3.3]{behmelindnermaller11}). By \eqref{GOUdef2}, $V_t^x$ is given by
$V_t^x = \cE(U)_t \left( x + \int_{(0,t]} \cE(U)_{s-}^{-1} \,
d\eta_s\right)$. Since $\cE(U)_t \neq 0$  as a consequence of
$\nu_U(\{-1\}) = 0$, we have $\lim_{|x|\to \infty} |V_t^x| = \infty$
and hence $\lim_{|x|\to \infty} f(V_t^x) = 0$ for any $f\in
C_0(\RR)$. By Lebesgue's dominated convergence theorem, this implies
$T_t f(x) = E [f(V_t^x)] \to 0$ as $|x|\to\infty$. The fact that for
bounded and continuous $f$ the mapping $x\mapsto E[f(V_t^x)]$ is
continuous is obvious using dominated convergence.
Thus $T_t$ maps $C_0(\RR)$ into $C_0(\RR)$ and (F1) is shown. (F2)
follows from (F1) and \cite[Thm. 3.15]{liggett}, observing that
for each $x\in \bR$, $V^x$ satisfies $P(V_0^x=x)=1$ and
$(V_t^x)_{t\geq 0}$ is adapted to the smallest filtration satisfying
the usual hypotheses induced by $((U_t,L_t)^T)_{t\geq 0}$, which is
right continuous.

(ii) Before we prove \eqref{generatorGOU-UL}, we give a bound for
the integrand appearing in \eqref{generatorGOU-UL} which will be
used throughout. Let $f \in S(\bR)$ and set
\begin{eqnarray}
K_1(f) & := & \sup_{y\in \bR} \left\{ |f'(y)| (1+|y|) + |f''(y)|
(1+|y|)^2\right\} < \infty \quad \mbox{and} \label{eq-K1}\\
K_2 & := & \frac12 \, \sup_{y\in \bR} \; \; \sup_{\zeta \in \bR:
|\zeta|\leq (1+|y|)/2} \frac{ (1+|y|)^2}{(1+|y+\zeta|)^2}< \infty.
\nonumber
\end{eqnarray} We claim that
\begin{eqnarray} \label{eq-majorant1}
\lefteqn{\left| f(x + xz_1 + z_2) - f(x) - f'(x) (x z_1 + z_2)
\mathds{1}_{|z|\leq 1} \right|} \\
&\leq & K_1(f) K_2 |z|^2 \mathds{1}_{|z|\leq 1/2} + K_1(f) |z|
\mathds{1}_{1/2 < |z| \leq 1} + 2 \|f\| \mathds{1}_{|z| > 1/2} \; \;
\forall \; z = (z_1,z_2)^T \in \bR^2, \; x \in \bR. \nonumber
\end{eqnarray}
Indeed, this is obvious for $|z|> 1/2$ since $|xz_1+z_2| \leq
\sqrt{1+x^2} |z| \leq (1+|x|) |z|$. For $|z|\leq 1/2$, by Taylor's
theorem there is $\zeta\in \bR$ with $0\leq |\zeta| \leq |xz_1+z_2|
\leq (1+|x|) |z| \leq (1+|x|)/2$ such that
\begin{eqnarray*}
\lefteqn{\left| f(x+xz_1 + z_2) - f(x) - f'(x) (xz_1 + z_2)\right|}
\\
& = & 2^{-1} |f''(x+\zeta)|  (xz_1+z_2)^2 \\
& \leq & 2^{-1} \left|f''(x+\zeta) (1+|x+\zeta|)^2\right| \,
\frac{(1+|x|)^2}{(1+|x+\zeta|)^2} |z|^2 \\
& \leq & K_1(f) K_2 |z|^2,
\end{eqnarray*}
which shows \eqref{eq-majorant1} also for $|z|\leq 1/2$. In
particular, the right hand side of \eqref{generatorGOU-UL} is in
$C_0(\bR)$ for $f\in S(\bR)$ by Lebesgue's dominated
convergence theorem.

(iii) Let us show \eqref{generatorGOU-UL}. Let $f\in
S(\RR)$, then by It\^o's formula (e.g. \cite[Thm.
II.32]{protter}) we have
\begin{eqnarray*}
 \lefteqn{f(V^x_t)-f(V^x_0)}\\
&=& \int_{(0,t]} f'(V^x_{s-}) dV^x_s + \frac{1}{2}\int_{(0,t]}
f''(V^x_{s-}) d[V^x,V^x]^c_s + \sum_{0< s\leq
t}\left(f(V^x_s)-f(V^x_{s-})-f'(V^x_s)\Delta V^x_s\right)
\end{eqnarray*}
and hence
\begin{eqnarray}
 T_tf(x)-f(x)&=& E \left[f(V^x_t)-f(V^x_0)\right] \nonumber\\
&=& E \left[\int_{(0,t]} f'(V^x_{s-}) dV^x_s + \sum_{0< s\leq t}\left(f(V^x_s)-f(V^x_{s-})-f'(V^x_{s-})\Delta V^x_s\right)\right] \nonumber \\
&& + \frac{1}{2}E \left[\int_{(0,t]} f''(V^x_{s-}) d[V^x,V^x]^c_s \right]\nonumber\\
&=:& \mbox{I}_t + \mbox{II}_t, \quad \mbox{say.}
\label{eq-ito-numbers}
\end{eqnarray}
Observe that $dV_s^x = g(V_{s-}^x) d Z_s$ and $\Delta V^x_s=
g(V^x_{s-})\Delta Z_s$. Since $Z$ is a L\'evy process, by the
L\'evy-It\^o decomposition (e.g. \cite[Thm. 2.4.16]{applebaum}) we
can write $Z_t = \gamma_Z t + M_t + \sum_{0<s\leq t} \Delta Z_s
\mathds{1}_{|\Delta Z_s| > 1}$, where $(M_t)_{t\geq 0}$ is a square
integrable martingale with expectation 0. Hence we obtain for the
first term
\begin{eqnarray*}
 \mbox{I}_t
&=& E \left[\int_{(0,t]} f'(V^x_{s-})g(V^x_{s-}) \gamma_Z ds\right] + E \left[\int_{(0,t]} f'(V^x_{s-}) g(V^x_{s-}) dM_s\right]\\
&& + E \left[\sum_{0<s\leq t} f'(V^x_{s-}) g(V^x_{s-}) \Delta Z_s \mathds{1}_{|\Delta Z_s| > 1}+ \sum_{0< s\leq t}\left(f(V^x_s)-f(V^x_{s-})-f'(V^x_{s-})g(V^x_{s-})\Delta Z_s\right) \right].\\
\end{eqnarray*}
Since $M$ is a square integrable martingale with expectation 0 and
since $s\mapsto f'(V_{s-}^x) g(V_{s-}^x)$ is bounded because of
$f\in S(\bR)$, the process $t\mapsto \int_{(0,t]} f'(V^x_{s-})
g(V^x_{s-}) dM_s$ is a martingale with expectation 0 (e.g.
\cite[Prop. 2.24]{medvegyev}). Hence we conclude
\begin{eqnarray*}
 \mbox{I}_t &=& \int_{(0,t]} E \left[ f'(V^x_{s-})g(V^x_{s-})\right] \gamma_Z \, ds\\
&& + E \left[ \sum_{0< s\leq t}  \left( f(V^x_{s-} + g(V_{s-}^x)
\Delta Z_s)
-f(V^x_{s-})-f'(V^x_{s-})g(V^x_{s-})\Delta Z_s \mathds{1}_{|\Delta Z_s| \leq 1}\right) \right]\\
&=& \int_{(0,t]} E \left[ f'(V^x_{s-})g(V^x_{s-})\right] \gamma_Z
ds\\
 && + E \left[\int_{(0,t]} \int_{\RR^2} \left( f\left(V^x_{s-}+ g(V^x_{s-})z
\right)-f(V^x_{s-})-f'(V^x_{s-})g(V^x_{s-})z\mathds{1}_{| z|\leq 1}
\right)\nu_Z(dz) ds\right],
\end{eqnarray*}
where we used the compensation formula (e.g. \cite[Thm.
4.4]{kyprianou}), which may be applied since
$E\int_{(0,t]}\int_{\bR^2} \left|f\left(V^x_{s-}+
g(V^x_{s-})z\right)
-f(V^x_{s-})-f'(V^x_{s-})g(V^x_{s-})z\mathds{1}_{| z|\leq 1}
\right|\nu_Z(dz) ds$ is finite by  \eqref{eq-majorant1}. Using the
continuity of $s\mapsto V_s^x$ at $s=0$ and again the bound from
\eqref{eq-majorant1}, it follows from Lebesgue's dominated
convergence theorem that
$$\lim_{t\to 0} t^{-1} {\rm{I}}_t = f'(x) g(x) \gamma_Z + \int_{\bR^2} \left(
f(x+g(x) z) - f(x) - f'(x) g(x) z \mathds{1}_{|z|\leq 1} \right)\,
\nu_Z(dz).$$

For the second term in Equation
\eqref{eq-ito-numbers} observe that by \cite[Eq. (4)]{karandikar}
\begin{eqnarray*}
 [V^x,V^x]^c_s&=& \left[ x + \int_{(0,\cdot]} g(V^x_{u-})dZ_u ,x + \int_{(0,\cdot]} g(V^x_{u-})dZ_u\right]_s^c\\
&=& \int_{(0,s]} g(V^x_{u-}) \,d[Z, Z^T]_u^c \,g(V^x_{u-})^T ,
\end{eqnarray*}
and since $[Z,Z^T]^c_u = A_Z u$ it follows
$$\mbox{II}_t=\frac{1}{2} E \left[\int_{(0,t]} f''(V^x_{s-}) g(V^x_{u-})\, A_Z  \,g(V^x_{u-})^T \, du\right].$$
Together with the obtained formula for $\mbox{I}_t$, and inserting
the definition of $g$ and $Z$, this shows that $\lim_{t\to 0} t^{-1}
({\rm{I}}_t + {\rm{II}}_t)$ is given by the right hand side of
\eqref{generatorGOU-UL}. Since $V$ is a Feller process, and since
the right hand side of \eqref{generatorGOU-UL} is in $C_0(\bR)$ for
$f\in S(\bR)$ by \eqref{eq-majorant1}, this pointwise limit
is actually uniform in $x$ (e.g. \cite[Lem. 31.7]{sato}), so that
$S(\bR)$ is contained in the domain of the generator of $V$
and that $A^V f$ is given by \eqref{generatorGOU-UL} for all $f\in
S(\RR)$.

(iv) We now show that $S(\bR)$ is a core for $A^V$ under the extra
assumption  that $E U_1^2 < \infty$ and $E L_1^2 < \infty$. Denote
$A_t = \cE(U)_t$ and $B_t = \cE(U)_t \int_{(0,t]} \cE(U)_s^{-1} \,
d\eta_s$. Then $B_t \stackrel{d}{=} \int_{(0,t]} \cE(U)_{s-} \,
dL_s$ by \cite[Lem. 3.1]{behmelindnermaller11}. Then $E A_t^2 <
\infty$ and $E B_t^2 < \infty$ as a consequence of Proposition 3.1
and Lemma 6.1 in \cite{behme2011} together with \cite[Thm.
25.18]{sato}. We conclude that $\diffp[2]{}{x} T_t f(x)$ exists for
$f\in S(\bR)$ and that
\begin{eqnarray*}
\diffp{}{x} T_tf(x) & = & \diffp{}{x} E[f(A_t x+B_t)] = E[A_t f'(A_t
x+B_t)] \quad \mbox{and}\\
\diffp[2]{}{x} T_t f(x) & = & \diffp[2]{}{x} E [f(A_tx + B_t)] = E
[A_t^2 f''(A_tx + B_t)].
\end{eqnarray*}
Since $E A_t^2 < \infty$, the mapping $x\mapsto \diffp[2]{}{x} T_t f(x)$ is
obviously continuous, so that $T_t S(\bR) \subset C^2(\bR)
\cap C_0(\bR)$. Using that $E |B_t| < \infty$ and $\lim_{|y|\to
\infty} |y f'(y)| = 0$ for $f\in S(\bR)$, we further see by
dominated convergence that
\begin{eqnarray*}
\left| x \diffp{}{x} T_t f(x)\right|
&\leq& E\left[\left|A_t x f'(A_t x + B_t)\right|\right] \\
&\leq& E\left[\left| (A_t x + B_t) f'(A_t x + B_t)\right|\right] + E\left[\left|B_t f'(A_t x + B_t)\right|\right]\\
&\to& 0,\quad \mbox{as }|x|\to\infty .
\end{eqnarray*}
In the same way one can check that $\left|x^2 \diffp[2]{}{x}
T_tf(x)\right|\to 0$ as $|x|\to \infty$ such that $T_tS(\RR)
\subset S(\RR)$. By \cite[Prop. 1.3.3]{ethierkurtz}  we thus
obtain that $S(\RR)$ is a core for $A^V$, provided that $E
U_t^2 < \infty$ and $E L_t^2 < \infty$.

(v) Now we drop the assumption that $E U_1^2 + E L_1^2 < \infty$ and
show that $S(\bR)$ is a core for $A^V$. Similarly to the
proof of Theorem 3.1 in Sato and Yamazato \cite{satoyamazato}, for
$f \in S(\bR)$ denote the right hand side of
\eqref{generatorGOU-UL} by $G f(x)$ and define
\begin{eqnarray*} G_0 f(x) & := &  f'(x)(x\gamma_U+\gamma_L) + \frac{1}{2} f''(x)(x^2\sigma_U^2 + 2x\sigma_{U,L}+\sigma_L^2) \\
&& + \int_{\{ z \in \RR^2: |z|\leq 1\}} \left(f(x+xz_1+z_2) - f(x) -
f'(x)(xz_1 +z_2)\mathds{1}_{|z|\leq 1} \right) \nu_{U,L}(dz_1,dz_2).
\end{eqnarray*}
For $f\in C_0(\bR)$, denote further
$$W f(x) := \int_{\{ z \in \bR^2 : |z|> 1\}} \left( f(x+ x z_1 + z_2) -
f(x) \right) \, \nu_{U,L}(dz_1,dz_2).$$ Then $W: C_0(\bR) \to
C_0(\bR)$ is a bounded linear operator, and from (iii) we know that
$A^V f = G f = G_0 f + W f$ for $f\in S(\bR)$. Consider the process
$V_{(0)}$ defined by ${V}_{(0),t}^x = x + \int_{(0,t]}
{V}_{(0),s-}^x \, d\tilde{U_s} + \tilde{L}_t$, where
$(\tilde{U},\tilde{L})^T$ is a L\'evy process with characteristic
triplet $(\gamma_Z, A_Z, \mathds{1}_{|z|\leq 1} \nu_Z(dz))$. Again
by (iii), $G_0 f = A^{V_{(0)}} f$ for $f\in S(\bR)$, and from (iv)
we know that $S(\bR)$ is a core for $A^{V_{(0)}}$, so that the
closure $\overline{G_0}$ of ${G_0}$ is $A^{{V_{(0)}}}$, in
particular $D (\overline{{G_0}}) = D(A^{{V_{(0)}}})$. Since $G f =
{G_0} f + W f$ for $f\in S(\bR)$ and $W$ is bounded, it follows that
the closure $\overline{G}$ of $G$ satisfies $\overline{G} =
\overline{{G_0}} + W$, in particular $D(A^{{V_{(0)}}}) = D
(\overline{{G_0}}) = D(\overline{G})$. Since $A^V$ is a closed
operator, we further know that $D(\overline{G}) \subset D(A^V)$ and
that $A^V$ is a closed extension of $\overline{G}$. From the
Hille-Yosida theorem (e.g. \cite[Thm. 1.2.6]{ethierkurtz}) it
follows that for every $\lambda
> 0$, $\lambda \,\mbox{Id} - \overline{{G_0}}: D(A^{{V_{(0)}}}) =
D(\overline{{G_0}}) \to C_0(\bR)$, $f\mapsto \lambda f - \overline{{G_0}} f$
is a bijection with bounded inverse (the resolvent) satisfying $\| (\lambda\,
\mbox{Id} - \overline{{G_0}})^{-1}\| \leq \lambda^{-1}$. For $\lambda_0>
\|W\|$, it then follows from a perturbation result for closed linear
operators (e.g. \cite[Thm. IV.1.16]{kato}), that also $\lambda_0\,
\mbox{Id} - \overline{G} = \lambda_0\,  \mbox{Id} - \overline{{G_0}} - W:
D(\overline{G}) = D(\overline{{G_0}}) \to C_0(\bR)$ is a bijection with
bounded inverse. Since $A^V$ is a closed extension of $\overline{G}$ and
also $\lambda_0 \, \mbox{Id} - A^V: D(A^V) \to C_0(\bR)$ is a
bijection (e.g. \cite[Prop. 1.2.1]{ethierkurtz}), we must have
$D(\overline{G}) = D(A^V)$ and hence $\overline{G} = A^V$. This shows that
$S(\bR)$ is a core for $A^V$.

(vi) Finally, we show that $C_c^2(\bR)$ and $C_c^\infty(\bR)$ are
cores for $A^V$. Let $h$ be a function in $C^\infty_c$ with $h(x)=1$
if $|x|\leq 1$ and $h(x)=0$ if $|x|\geq 2$. Define $h_n(x)=h(x/n)$
and for any $f\in S(\RR)$ set $f_n(x)=f(x)h_n(x)$. Then
$f_n\in C_c^2(\RR)$ and we obtain that $f_n\to f$, $f_n'\to f'$,
$f_n''\to f''$, $xf_n'(x)\to xf'(x)$, $xf_n''(x)\to xf''(x)$ and
$x^2f_n''(x)\to x^2f''(x)$ uniformly in $x$ as $n\to \infty$. In
particular, with $K_1(\cdot)$ as defined in \eqref{eq-K1}, we see
that $K_1(f_n)$ is bounded in $n$ and hence we conclude with
\eqref{eq-majorant1} that $A^V f_n\to A^V f$ uniformly as $n\to
\infty$. This shows that  $C_c^2(\RR)$ is a core for $A^V$. Finally,
for $f\in C_c^2(\bR)$ there are functions $g_n\in C_c^\infty(\bR)$
with uniformly bounded supports such that $g_n\to f$, $g_n'\to f'$
and $g_n'' \to f''$ uniformly as $n\to\infty$, hence also $x g_n'(x)
\to x f'(x)$, $x g_n''(x) \to x f''(x)$ and $x^2 g_n''(x) \to x^2
f''(x)$ uniformly in $x$ as $n\to \infty$. Again, this gives $A^V
g_n \to A^V f$ uniformly as $n\to\infty$ so that $C_c^\infty$ is a
core for $A^V$.
\end{proof}
The following corollary is immediate from Theorem
\ref{thm-generator-GOU}.

\begin{corollary}
In the setting of Theorem \ref{thm-generator-GOU}, if $U$ and $L$ are
additionally independent, Equation \eqref{generatorGOU-UL} simplifies to
\begin{eqnarray}
 A^Vf(x)&=& f'(x)(x\gamma_U+\gamma_L) + \frac{1}{2} f''(x)(x^2\sigma_U^2 +\sigma_L^2)\nonumber \\
&& + \int_{\RR} (f(x+xy) - f(x) - f'(x)xy \mathds{1}_{|y|\leq 1}) \nu_U(dy) \nonumber\\
&& + \int_\RR (f(x+y) - f(x) - f'(x)y\mathds{1}_{|y|\leq 1}) \nu_{L}(dy)\nonumber \\
&=& A^Lf(x) + f'(x) x\gamma_U + \frac{1}{2} f''(x) x^2\sigma_U^2  \label{generatorGOU-UL-ind}\\
&& + \int_{\RR} (f(x+xy) - f(x) - f'(x)xy \mathds{1}_{|y|\leq 1})
\nu_U(dy) .\nonumber
\end{eqnarray}
\end{corollary}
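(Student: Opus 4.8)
The plan is to feed the independence of $U$ and $L$ directly into the general generator formula \eqref{generatorGOU-UL}. First I would recall the standard characterization of independence of the two components of a bivariate L\'evy process $(U,L)^T$: it holds if and only if the Gaussian covariance vanishes, $\sigma_{U,L}=0$, and the bivariate L\'evy measure $\nu_{U,L}$ is carried by the union of the two coordinate axes, with
$$\nu_{U,L}(dz_1,dz_2) = \nu_U(dz_1)\,\delta_0(dz_2) + \delta_0(dz_1)\,\nu_L(dz_2) \quad\text{on } \RR^2\setminus\{0\}$$
(see e.g. \cite{sato}); this is of course compatible with the standing assumption $\nu_{U,L}((-1,dz_2)^T)=0$ and $\nu_U(\{-1\})=0$.

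Next I would substitute this decomposition into \eqref{generatorGOU-UL} for $f\in S(\RR)$. The quadratic form $x^2\sigma_U^2 + 2x\sigma_{U,L} + \sigma_L^2$ collapses to $x^2\sigma_U^2+\sigma_L^2$. The integral over $\RR^2$ splits into the contribution of the $z_1$-axis, where $z=(z_1,0)^T$ so that $|z|=|z_1|$ and $xz_1+z_2=xz_1$, yielding $\int_\RR (f(x+xy)-f(x)-f'(x)xy\mathds{1}_{|y|\leq 1})\,\nu_U(dy)$, and the contribution of the $z_2$-axis, where $z=(0,z_2)^T$, $|z|=|z_2|$, $xz_1+z_2=z_2$, yielding $\int_\RR (f(x+y)-f(x)-f'(x)y\mathds{1}_{|y|\leq 1})\,\nu_L(dy)$. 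The point to watch is that the Euclidean cut-off $\mathds{1}_{|z|\leq 1}$ restricts, on the respective axes, exactly to the one-dimensional cut-offs $\mathds{1}_{|z_1|\leq 1}$ and $\mathds{1}_{|z_2|\leq 1}$, which is what makes the two pieces genuinely of L\'evy--Khintchine form; this yields the first displayed identity in the corollary.

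Finally I would group the terms $f'(x)\gamma_L + \tfrac12 f''(x)\sigma_L^2 + \int_\RR(f(x+y)-f(x)-f'(x)y\mathds{1}_{|y|\leq 1})\,\nu_L(dy)$ and recognize them, via \eqref{generatorlevy} applied to the L\'evy process $L$ with triplet $(\gamma_L,\sigma_L^2,\nu_L)$, as $A^L f(x)$; the remaining terms are precisely $f'(x)x\gamma_U + \tfrac12 f''(x)x^2\sigma_U^2 + \int_\RR(f(x+xy)-f(x)-f'(x)xy\mathds{1}_{|y|\leq 1})\,\nu_U(dy)$, which gives the second displayed identity. There is no genuine obstacle here: all the analytic content (integrability of the integrand via \eqref{eq-majorant1}, membership of the right-hand side in $C_0(\RR)$, the identification of $A^V$ on $S(\RR)$ and the core property) has already been established in Theorem \ref{thm-generator-GOU}, so the only work is the elementary bookkeeping of the Gaussian block and of the truncation function on the coordinate axes.
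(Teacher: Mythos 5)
Your proposal is correct and follows exactly the route the paper intends: the paper states this corollary as ``immediate from Theorem \ref{thm-generator-GOU}'' without proof, and your argument supplies precisely the omitted bookkeeping (independence forcing $\sigma_{U,L}=0$ and $\nu_{U,L}$ to be carried by the coordinate axes, the observation that the Euclidean truncation restricts to the one-dimensional truncations on each axis, and the identification of the $L$-terms with $A^Lf$ via \eqref{generatorlevy}).
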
\bigskip

\begin{corollary}\label{coro-generatorGOUxieta}
Let $(\xi_t)_{t\geq 0}$ and $(\eta_t)_{t\geq 0}$ be two independent
L\'evy processes and let $(V^x_t)_{t\geq 0}$ be the generalized
Ornstein-Uhlenbeck process driven by $(\xi, \eta)^T$ with starting
point $x$ as defined in \eqref{GOUdef}. Then $(V^x_t)_{t\geq 0}$ is
a Feller process whose generator has a domain containing $S(\RR)$,
and $S(\RR)$, $C_c^2(\bR)$ and $C_c^\infty(\bR)$ are cores for
$A^V$. For any $f\in S(\RR)$ the generator can be written as
\begin{eqnarray}
 A^Vf(x)&=& A^{\eta}f(x) - f'(x) x\gamma_\xi + \frac{1}{2} (f''(x) x^2 + f'(x)x) \sigma_\xi^2  \label{generatorGOU-xieta-ind}\\
&& + \int_{\RR} (f(xe^{-y}) - f(x) + f'(x)xy \mathds{1}_{|y|\leq 1})
\nu_\xi (dy).\nonumber
\end{eqnarray}
If $f\in S(\bR)$ and $f(0) = 0$, define
$\tilde{f}(x)=f(e^x)$ and $\tilde{\tilde{f}}(x)=f(-e^x)$. Then
$\tilde{f}, \tilde{\tilde{f}} \in C_0^2(\bR) \subset D (A^{-\xi})$,
and for such $f$ Equation \eqref{generatorGOU-xieta-ind} can be rewritten
as
\begin{equation} \label{generatorGOU-xieta-ind-short}
A^V f(x) =  A^{\eta}f(x)+ A^{-\xi} \tilde{f}(\log x)\mathds{1}_{x>0}
+ A^{-\xi} \tilde{\tilde{f}}(\log |x|)\mathds{1}_{x<0}.
\end{equation}
\end{corollary}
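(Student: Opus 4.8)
The plan is to reduce everything to Theorem~\ref{thm-generator-GOU} and the preceding corollary by first translating the pair $(\xi,\eta)^T$ into the driving pair $(U,L)^T$ of \eqref{eq-def-UL}, and then transforming the generator \eqref{generatorGOU-UL-ind} by a change of variables in the L\'evy integral. First I would record the consequences of independence: since $\xi$ and $\eta$ are independent they almost surely have no common jump times, so $(e^{-\Delta\xi_s}-1)\Delta\eta_s=0$ for every $s$ and $\sigma_{\xi,\eta}=0$; hence $L=\eta$ (as already noted in Section~2), so $A^L=A^\eta$ and $\sigma_L^2=\sigma_\eta^2$. Moreover the process $U$ in \eqref{eq-def-UL} is a measurable functional of the path of $\xi$ alone, hence $U$ and $L=\eta$ are independent, and $\nu_U$ is supported on $(-1,\infty)$ because $e^{-y}-1>-1$ for all $y\in\RR$. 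Since the GOU process \eqref{GOUdef} with starting point $x$ is the solution $V^x$ of $dV^x_t=V^x_{t-}\,dU_t+dL_t$, $V^x_0=x$, Theorem~\ref{thm-generator-GOU} applies and gives the Feller property, $S(\RR)\subset D(A^V)$ and the assertions on the cores, while the preceding corollary gives, for $f\in S(\RR)$,
$$A^Vf(x)=A^\eta f(x)+f'(x)\,x\gamma_U+\tfrac12 f''(x)\,x^2\sigma_U^2+\int_\RR\bigl(f(x+xy)-f(x)-f'(x)\,xy\,\mathds{1}_{|y|\le1}\bigr)\,\nu_U(dy).$$

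Next I would identify the characteristic triplet of $U$ in terms of $(\gamma_\xi,\sigma_\xi^2,\nu_\xi)$. Writing $U_t=-\xi_t+J_t+t\sigma_\xi^2/2$ with $J_t:=\sum_{0<s\le t}(e^{-\Delta\xi_s}-1+\Delta\xi_s)$, the pair $(-\xi,J)^T$ is a bivariate L\'evy process whose L\'evy measure is the image of $\nu_\xi$ under $y\mapsto(-y,\,e^{-y}-1+y)$; the standard projection formulas for the triplet of a sum then yield $\sigma_U^2=\sigma_\xi^2$, $\nu_U$ equal to the image of $\nu_\xi$ under $T(y):=e^{-y}-1$, and, after cancelling $\gamma_J=\int_{|e^{-y}-1+y|\le1}(e^{-y}-1+y)\,\nu_\xi(dy)$,
$$\gamma_U=-\gamma_\xi+\tfrac12\sigma_\xi^2+\int_\RR\bigl((e^{-y}-1)\,\mathds{1}_{|e^{-y}-1|\le1}+y\,\mathds{1}_{|y|\le1}\bigr)\,\nu_\xi(dy),$$
all these integrals being finite because their integrands are $O(y^2)$ near $0$ and the indicators truncate them away from $0$. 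Substituting these expressions into the displayed formula for $A^Vf$ and changing variables $y=e^{-w}-1$ in the $\nu_U$-integral (so that $x+xy=xe^{-w}$ and $\mathds{1}_{|y|\le1}=\mathds{1}_{|e^{-w}-1|\le1}$), the two truncation integrals cancel and one is left with exactly \eqref{generatorGOU-xieta-ind}. I expect the main (though still routine) obstacle to be precisely this bookkeeping of the truncation and compensator terms: one must check the $O(y^2)$ contributions are $\nu_\xi$-integrable and that the $\tfrac12\sigma_\xi^2$ sitting inside $\gamma_U$ is exactly what produces the extra summand $\tfrac12 f'(x)x\sigma_\xi^2$ in \eqref{generatorGOU-xieta-ind}.

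Finally, for the reformulation \eqref{generatorGOU-xieta-ind-short}: if $f\in S(\RR)$ with $f(0)=0$, then $\tilde f(x)=f(e^x)\to f(0)=0$ as $x\to-\infty$ and $\tilde f(x)\to 0$ as $x\to+\infty$ because $f\in C_0(\RR)$, and from $\tilde f'(x)=e^xf'(e^x)$ and $\tilde f''(x)=e^xf'(e^x)+e^{2x}f''(e^x)$ together with $|uf'(u)|\to0$, $|u^2f''(u)|\to0$ as $|u|\to\infty$ and the continuity of $f',f''$ at $0$, one gets $\tilde f\in C_0^2(\RR)\subset D(A^{-\xi})$, and likewise $\tilde{\tilde f}$. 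Writing out $A^{-\xi}\tilde f(\log x)$ for $x>0$ from \eqref{generatorlevy} applied to the L\'evy process $-\xi$ (whose triplet is $(-\gamma_\xi,\sigma_\xi^2,\check\nu_\xi)$ with $\check\nu_\xi(B)=\nu_\xi(-B)$), inserting $\tilde f'(\log x)=xf'(x)$, $\tilde f''(\log x)=xf'(x)+x^2f''(x)$, $\tilde f(\log x+z)=f(xe^z)$ and substituting $z\mapsto-y$ in the L\'evy integral, reproduces exactly the $\xi$-dependent part of \eqref{generatorGOU-xieta-ind}; the identical computation with $\tilde{\tilde f}$ (using $-e^{\log|x|}=x$ for $x<0$) covers $x<0$, and at $x=0$ both sides of \eqref{generatorGOU-xieta-ind-short} equal $A^\eta f(0)$ since every $\xi$-term in \eqref{generatorGOU-xieta-ind} carries a factor $x$ or $x^2$. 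Combining the three cases yields \eqref{generatorGOU-xieta-ind-short}.
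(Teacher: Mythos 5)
Your proposal is correct and follows essentially the same route as the paper: reduce to Theorem~\ref{thm-generator-GOU} via the correspondence \eqref{eq-def-UL} (noting $L=\eta$ and $U\perp L$ under independence), convert the triplet of $U$ into that of $\xi$, and change variables $y=e^{-w}-1$ in the L\'evy integral, with the last display checked directly from \eqref{generatorlevy}. The only difference is that you derive the triplet relation $(\gamma_U,\sigma_U^2,\nu_U)\leftrightarrow(\gamma_\xi,\sigma_\xi^2,\nu_\xi)$ explicitly (correctly, including the retruncation term and the $\tfrac12\sigma_\xi^2$ that produces the extra $\tfrac12 f'(x)x\sigma_\xi^2$), whereas the paper delegates this to a cited lemma and calls the rest standard computations.
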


\begin{proof}
Since $(V^x_t)_{t\geq 0}$ fulfills \eqref{GOUSDE-matrix} for
$(U,L)^T$ defined in \eqref{eq-def-UL} the Feller property as well
as the statements on the domain and cores of the generator follow
directly from Theorem \ref{thm-generator-GOU}. Also observe from
\eqref{eq-def-UL} that in the independent case we have $\eta_t=L_t$
and thus $A^{\eta}=A^L$ whereas the relation between $\xi$ and $U$
yields $\nu_U((-\infty, -1])=0$. In \cite[Lem.
3.4]{behmelindnermaller11} we have computed the characteristic
triplet of $\xi$ in terms of the characteristic triplet of $U$ (the
$\hat{U}$ used there is equal to $\xi$ whenever
$\nu_U((-\infty,-1])=0$). Using these relations
one obtains \eqref{generatorGOU-xieta-ind} from \eqref{generatorGOU-UL-ind} by standard computations. \\
Finally the fact that $\tilde{f}, \tilde{\tilde{f}} \in C_0^2(\bR)$
if $f\in S(\bR)$ such that $f(0) = 0$ and the validity of
\eqref{generatorGOU-xieta-ind-short} may be checked directly from
\eqref{generatorlevy} using the definitions of $\tilde{f}$ and
$\tilde{\tilde{f}}$.
\end{proof}

\begin{remark}
In \cite{savovetal} the exponential functional for independent processes $\xi$ and $\eta$ is studied.
Under the condition of finite first moments of $\xi$ and $\eta$, the authors prove
that for suitable functions $f$ with support on the positive half line the generator of the GOU process can be written as
\begin{equation}
A^{V}f(x) = A^{-\xi}\tilde{f}(\log x) + A^{\eta} f(x)
\label{eq-savov-et-al}
\end{equation}
where $\tilde{f}(x)=f(e^x)$ and $A^{-\xi}$ and $A^{\eta}$ are the
generators of $-\xi$ and $\eta$ respectively. Remark that the $\xi$
used by the authors corresponds to $-\xi$ in our notation. The
formula \eqref{eq-savov-et-al} for positive $x$ is also obtained in
\cite[Proof of Thm. 1]{carmona-unpublished}.
\end{remark}

\section{Relations between the exponential functional and the driving L\'evy processes}\setcounter{equation}{0}
\label{sec-relations}

It is basic knowledge in the theory of Markov processes (see e.g.
\cite[Prop. 4.9.2]{ethierkurtz}), that if $\mu$ is an
invariant measure for the Markov process $X$ with strongly
continuous contraction semigroup $T_t$ and generator $A$, i.e. if
$\mu(B)=\int \mu_t(x,B) \mu(dx)$ for all Borel sets $B$, then
\begin{equation} \label{generatorstat}
 \int_{\RR^d} Af(y)\mu(dy)= 0 \quad \forall f\in D(A).
\end{equation}
Conversely, if \eqref{generatorstat} holds, $\mu$ is an invariant measure under some additional conditions.
In the special case of Feller processes Equation \eqref{generatorstat} holds if and only if $\mu$ is an invariant
measure of the corresponding process $X$ \cite[Thm. 3.37]{liggett}.

In \cite{carmonapetityor} and \cite{carmonapetityorLP} the authors
make use of Equation \eqref{generatorstat} to obtain the density of
a specific stationary generalized Ornstein-Uhlenbeck process. More
precisely they obtain the density of the exponential functional in
the special case that $\xi$ is a Brownian motion with drift and $\eta$ is deterministic.

Let $(V_t)_{t\geq 0}$ be a GOU process as defined in \eqref{GOUdef}
or even a process as defined in \eqref{GOUdef2}, fulfilling the SDE
\eqref{SDEGOU}, with $\nu_U( \{-1\}) = 0$. Assume that $U$ and $L$
are independent, i.e. the generator of $(V_t)_{t\geq 0}$ is given by
\eqref{generatorGOU-UL-ind} for $f\in S(\bR)$. Let
$\mu=\cL(V_\infty)$ be the invariant measure of $(V_t)_{t\geq 0}$,
assuming its existence. Then by \eqref{generatorstat} we obtain for
any $f \in S(\bR) \subset D(A^V)$
\begin{eqnarray} \label{generatorinteq}
 0 &=& \int_{\RR} A^V f(x) \mu (dx) \nonumber\\
&=&  \int_{\RR}A^{L} f(x) \mu(dx)+ \gamma_U \int_{\RR} f'(x) x \,\mu (dx) + \frac{\sigma_U^2}{2} \int_{\RR} f''(x) x^2 \,\mu(dx) \label{eq-intgenerator} \\
&& + \int_{\RR} \int_{\RR\backslash\{-1\}} (f(x+xy) - f(x) - f'(x)xy \mathds{1}_{|y|\leq 1}) \nu_U(dy) \mu(dx).\nonumber
\end{eqnarray}

This and the previous  results allow to establish relationships
between the characteristic functions of $V_\infty$, $U$ and $L$, as done in the following. Recall that $\psi_X(u)=\log E[e^{iuX_1}]$ is the characteristic exponent of the L\'evy process $X$.

\begin{theorem}\label{thm-generatorlimit}
Let $(U_t)_{t\geq 0}$ and $(L_t)_{t\geq 0}$ be two independent
L\'evy processes with $\nu_U(\{-1\}) = 0$ and such that
$V_\infty=\int_0^\infty \cE(U)_{s-}dL_s$ converges to a finite
random variable. Then $\mu=\cL(V_\infty)$ is the
invariant law of the
process $(V_t)_{t\geq 0}$ defined by \eqref{GOUdef2}.\\
Let $h\in C^\infty_c(\RR)$ be such that $h(x)=1$ for $|x|\leq 1$ and
$h(x)=0$ for $|x|\geq 2$ and set $h_n(x):=h(\frac{x}{n})$ and
$f(x)=e^{iux}$, $f_n(x)=f(x)h_n(x)$ for $u\in \bR$. Then
\begin{eqnarray}
\psi_L(u)\phi_{V_{\infty}}(u)&=&\lim_{n\to \infty} \left( -\gamma_U \int_{\RR} xf_n'(x) \,\mu (dx) - \frac{\sigma_U^2}{2} \int_{\RR} x^2f_n''(x) \,\mu(dx) \right. \label{eq-relation1}\\
&& \left. - \int_{\RR} \int_{\RR} (f_n(x+xy) - f_n(x) - xyf_n'(x)
\mathds{1}_{|y|\leq 1}) \nu_U(dy) \mu(dx)\right). \nonumber
\end{eqnarray}
If additionally $E [V_\infty^2] = \int_{\bR} x^2 \, \mu(dx) <
\infty$, then
\begin{eqnarray}
\psi_L(u) \phi_{V_\infty} (u)
&=& -iu\gamma_U E\left[V_\infty e^{iuV_\infty} \right] + \frac{\sigma_U^2 u^2}{2} E\left[V_\infty^2 e^{iuV_\infty} \right] \label{eq-relation2}\\
&&- \int_{\RR} \left(\phi_{V_{\infty}}(u(1+y)) - \phi_{V_{\infty}}(u) - iuE\left[V_\infty e^{iuV_\infty} \right] y \mathds{1}_{|y|\leq 1}\right) \nu_U(dy) \nonumber \\
 &=& - u \gamma_U \phi_{V_\infty}'(u) - \frac{\sigma_U^2 u^2}{2} \phi_{V_\infty}''(u)   \label{eq-relation4}\\
&& - \int_{\RR} \left(\phi_{V_{\infty}}(u(1+y)) - \phi_{V_{\infty}}(u) - u \phi_{V_\infty}'(u) y \mathds{1}_{|y|\leq 1}\right) \nu_U(dy) \nonumber\\
&=& - E \left[ e^{i u V_\infty} \psi_U ( u V_\infty) \right] \label{eq-relation3}
\end{eqnarray}
\end{theorem}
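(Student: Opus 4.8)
The plan is to start from the stationarity identity \eqref{eq-intgenerator}, which holds for every $f\in S(\bR)\subset D(A^V)$, and apply it to the truncated exponentials $f_n(x)=e^{iux}h_n(x)$, which lie in $C_c^2(\bR)\subset S(\bR)$. Plugging $f_n$ into \eqref{eq-intgenerator} and isolating the term $\int_\RR A^L f_n(x)\,\mu(dx)$ on one side, I would then take $n\to\infty$. On the left-hand side, note that $A^L f_n(x)\to A^L f(x)=\psi_L(u)e^{iux}$ pointwise, with a uniform bound coming from \eqref{generatorlevy} together with the fact that $K_1(f_n)$ is bounded in $n$ (exactly as in part (vi) of the proof of Theorem \ref{thm-generator-GOU}); hence by dominated convergence $\int_\RR A^L f_n\,d\mu \to \psi_L(u)\int_\RR e^{iux}\,\mu(dx)=\psi_L(u)\phi_{V_\infty}(u)$. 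The remaining terms on the right-hand side are exactly those displayed in \eqref{eq-relation1}, so \eqref{eq-relation1} follows. The main subtlety here is that $\mu$ need not have any finite moments, so one cannot pass to the limit termwise inside the three remaining integrals — they must be kept grouped under a single limit, which is why \eqref{eq-relation1} is stated with the $\lim_{n\to\infty}$ outside.

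For \eqref{eq-relation2}, I would now add the hypothesis $E[V_\infty^2]<\infty$. Then $\phi_{V_\infty}\in C^2(\bR)$ with $\phi_{V_\infty}'(u)=iE[V_\infty e^{iuV_\infty}]$ and $\phi_{V_\infty}''(u)=-E[V_\infty^2 e^{iuV_\infty}]$, and one can differentiate under the expectation freely. Using $f_n'(x)=iue^{iux}h_n(x)+e^{iux}h_n'(x)$ and similarly for $f_n''$, together with $h_n\to 1$, $h_n'\to 0$, $h_n''\to 0$ uniformly on compacts and the bound \eqref{eq-majorant1} (whose majorant is $\mu$-integrable once $\int x^2\mu(dx)<\infty$), dominated convergence lets me pass to the limit in each of the three integrals in \eqref{eq-relation1} separately. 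The Gaussian terms give $-iu\gamma_U E[V_\infty e^{iuV_\infty}]$ and $\tfrac{\sigma_U^2u^2}{2}E[V_\infty^2 e^{iuV_\infty}]$; the jump integral becomes $\int_\RR(E[e^{iu(1+y)V_\infty}]-E[e^{iuV_\infty}]-iuE[V_\infty e^{iuV_\infty}]y\mathds{1}_{|y|\le 1})\,\nu_U(dy)$, i.e. $\int_\RR(\phi_{V_\infty}(u(1+y))-\phi_{V_\infty}(u)-iuE[V_\infty e^{iuV_\infty}]y\mathds{1}_{|y|\le 1})\,\nu_U(dy)$. This is \eqref{eq-relation2}, and \eqref{eq-relation4} is then just the rewriting $iE[V_\infty e^{iuV_\infty}]=\phi_{V_\infty}'(u)$, $-E[V_\infty^2 e^{iuV_\infty}]=\phi_{V_\infty}''(u)$.

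Finally, for \eqref{eq-relation3} I would interchange the order of integration in the right-hand side of \eqref{eq-relation2} (justified by the $\mu\otimes\nu_U$-integrability of the majorant in \eqref{eq-majorant1}) so that the $\nu_U(dy)$-integral sits inside $E[\,\cdot\,]$. For fixed $V_\infty=x$, the inner integral is
\[
\int_\RR\bigl(e^{i(ux)y}-1-i(ux)y\mathds{1}_{|y|\le1}\bigr)\nu_U(dy)
\;=\; \psi_U(ux) - i(ux)\gamma_U + \tfrac12\sigma_U^2(ux)^2,
\]
by the Lévy–Khintchine formula for $U$. Combining this with the explicit Gaussian terms $-iu\gamma_U E[V_\infty e^{iuV_\infty}]+\tfrac{\sigma_U^2u^2}{2}E[V_\infty^2 e^{iuV_\infty}]$, everything collapses and one is left with $-E[e^{iuV_\infty}\psi_U(uV_\infty)]$, which is \eqref{eq-relation3}. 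I expect the main obstacle to be bookkeeping: verifying the integrability conditions that license Fubini and each application of dominated convergence (particularly that the majorant $K_1(f_n)K_2|z|^2\mathds{1}_{|z|\le1/2}+\dots$ is uniformly controlled in $n$ and integrable against $\mu\otimes\nu_U$ when $E[V_\infty^2]<\infty$), and keeping careful track of the indicator $\mathds{1}_{|y|\le1}$ versus $\mathds{1}_{|z|\le1}$ when reducing from the bivariate truncation to the univariate one in the independent case.
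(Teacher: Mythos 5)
Your overall architecture is the paper's: apply the stationarity identity \eqref{eq-intgenerator} to the truncated exponentials $f_n$, isolate $\int_\RR A^Lf_n\,d\mu$ and identify its limit as $\psi_L(u)\phi_{V_\infty}(u)$ (this is exactly Lemma \ref{lemma-generatorlimit} in the paper), then under $E[V_\infty^2]<\infty$ pass to the limit termwise in the remaining integrals to get \eqref{eq-relation2}, and finally obtain \eqref{eq-relation3} by Fubini and the L\'evy--Khintchine formula evaluated at $uV_\infty$. That last computation is carried out correctly and is the part the paper leaves implicit. (You also silently assume the first claim, that $\mu$ is invariant, which the paper gets by citation; that is minor.)

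There is, however, a concrete error in how you license the dominated convergence steps. You invoke the boundedness of $K_1(f_n)$ in $n$ ``exactly as in part (vi) of the proof of Theorem \ref{thm-generator-GOU}'' and the majorant \eqref{eq-majorant1}. Both are available only for $f\in S(\bR)$, and $f(x)=e^{iux}$ is \emph{not} in $S(\bR)$, since $|xf'(x)|=|u|\,|x|\to\infty$. Concretely, $K_1(f_n)\ge |f_n'(n)|(1+n)\approx |u|(1+n)\to\infty$ as $n\to\infty$ (for $u\neq 0$), so the right-hand side of \eqref{eq-majorant1} is not uniformly controlled in $n$ and cannot serve as a dominating function. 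The correct domination, which is what the paper actually uses, is an \emph{unweighted} Taylor estimate: for the $A^L$-term, $|f_n(x+y)-f_n(x)-f_n'(x)y\mathds{1}_{|y|\le1}|\le 2\|f_n\|\mathds{1}_{|y|>1}+\|f_n''\|\,y^2\mathds{1}_{|y|\le1}$, involving only the plain sup norms $\|f_n\|$ and $\|f_n''\|$, which \emph{are} bounded in $n$, so a constant majorant suffices because $\mu$ is finite; and for the $\nu_U$-term, $|f_n(x+xy)-f_n(x)-xyf_n'(x)\mathds{1}_{|y|\le1}|\le 2\|f_n\|\mathds{1}_{|y|>1}+\|f_n''\|\,x^2y^2\mathds{1}_{|y|\le1}$, whose $x^2$ factor is precisely why the hypothesis $\int_\bR x^2\,\mu(dx)<\infty$ is needed for \eqref{eq-relation2}. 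With this substitution your argument goes through; as written, the domination step fails.
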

Equation \eqref{eq-relation3} can also be written in the compact
form
$$E \left[ (\psi_U(u V_\infty)+ \psi_L(u) ) e^{iu V_\infty} \right]
= 0 \quad \forall\; u \in \bR.$$

For the proof we need the following lemma. We use the notation
$S(\bR;\bC)$ to denote the class of complex valued functions
$f:\bR\to\bC$ such that $\Re(f) \in S(\bR)$ and $\Im(f) \in S(\bR)$.
Spaces like $C_c^\infty(\bR;\bC)$ are defined similarly. For a
generator $A$ we also write
$$D(A;\bC) := \{ f\in C_0(\bR;\bC) : \Re(f), \Im(f) \in D(A) \}.$$
It is clear that \eqref{generatorGOU-UL-ind} and hence
\eqref{eq-intgenerator} remain valid for complex valued functions
$f\in S(\bR;\bC)$.

\begin{lemma} \label{lemma-generatorlimit}
Let $(L_t)_{t\geq 0}$ be a L\'evy process in $\bR$ with generator
$A^L$, $\mu$ a fixed finite measure on $(\bR,\mathcal{B}_1)$ and
define $h,h_n,f$ and $f_n$ as in Theorem \ref{thm-generatorlimit}.
Then $f_n\in C_c^\infty(\RR;\bC)\subset D(A^L; \bC)$ and
$$\lim_{n\to\infty} \int_\RR A^Lf_n(x) \mu(dx) = \psi_L(u) \int_\RR e^{iux} \mu(dx) = \psi_L(u) \widehat{\mu} (u).$$
\end{lemma}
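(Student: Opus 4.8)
The plan is to apply the explicit formula \eqref{generatorlevy} for the generator $A^L$ of the real-valued L\'evy process $L$ to the functions $f_n(x) = e^{iux} h_n(x)$, and then pass to the limit $n\to\infty$ using dominated convergence. First I would record that $f_n \in C_c^\infty(\bR;\bC)$ since $h_n \in C_c^\infty(\bR)$ and $e^{iu\cdot}$ is smooth; hence $\Re f_n, \Im f_n \in C_c^2(\bR) \subset C_0^2(\bR) \subset D(A^L)$ by the statement following \eqref{generatorlevy}, so $f_n \in D(A^L;\bC)$ and $\int_\bR A^L f_n \, d\mu$ is well-defined. Then I would write out $A^L f_n(x)$ from \eqref{generatorlevy}:
\begin{eqnarray*}
A^L f_n(x) &=& \tfrac12 \sigma_L^2 f_n''(x) + \gamma_L f_n'(x) + \int_\bR \big(f_n(x+y) - f_n(x) - y f_n'(x)\mathds{1}_{|y|\leq1}\big)\,\nu_L(dy).
\end{eqnarray*}

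The key step is the pointwise limit $\lim_{n\to\infty} A^L f_n(x) = \psi_L(u) e^{iux}$ for each fixed $x$. Since $h_n(x) = h(x/n) \to 1$, $h_n'(x) = n^{-1}h'(x/n) \to 0$ and $h_n''(x) = n^{-2}h''(x/n)\to 0$, we get $f_n(x) \to e^{iux}$, $f_n'(x) = (iu h_n(x) + h_n'(x))e^{iux} \to iu\,e^{iux}$, and $f_n''(x) \to -u^2 e^{iux}$; similarly $f_n(x+y) \to e^{iu(x+y)}$ for each $y$. Feeding these into the displayed expression and using dominated convergence inside the $\nu_L$-integral (the majorant being furnished by the $C^2$-bounds on $h$ together with the standard integrability $\int (|y|^2\wedge 1)\,\nu_L(dy) < \infty$, analogously to \eqref{eq-majorant1}) yields
\begin{eqnarray*}
A^L f_n(x) &\to& e^{iux}\Big( -\tfrac12\sigma_L^2 u^2 + i\gamma_L u + \int_\bR (e^{iuy} - 1 - iuy\mathds{1}_{|y|\leq1})\,\nu_L(dy)\Big) = \psi_L(u)\,e^{iux}.
\end{eqnarray*}

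Finally I would justify exchanging $\lim_n$ with $\int_\bR \cdot\, d\mu$ via dominated convergence with respect to the finite measure $\mu$: one needs a $\mu$-integrable (indeed, since $\mu$ is finite, a bounded) majorant for $|A^L f_n(x)|$ uniform in $n$. This is the main technical obstacle. It follows from the estimates above: $|f_n|, |f_n'|, |f_n''|$ are bounded uniformly in $n$ (as $\|h_n\| = \|h\|$, $\|h_n'\| = n^{-1}\|h'\| \leq \|h'\|$, $\|h_n''\| \leq \|h''\|$), so the Gaussian and drift terms are uniformly bounded; for the integral term one splits $\nu_L$ at $|y| = 1$ and bounds $|f_n(x+y) - f_n(x) - yf_n'(x)|$ by a constant multiple of $|y|^2$ on $\{|y|\leq 1\}$ (via a second-order Taylor estimate using the uniform bound on $\|f_n''\|$) and by $2\|f_n\| + \|f_n'\| \le 2\|h\| + \|h'\|$ on $\{|y|>1\}$, giving a bound of the form $c\int(|y|^2\wedge 1)\nu_L(dy) + c$ independent of $n$ and $x$. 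Hence $\sup_n \|A^L f_n\|_\infty < \infty$, dominated convergence over $(\bR,\mu)$ applies, and we conclude
$$\lim_{n\to\infty}\int_\bR A^L f_n(x)\,\mu(dx) = \psi_L(u)\int_\bR e^{iux}\,\mu(dx) = \psi_L(u)\,\widehat{\mu}(u),$$
which is the claim.
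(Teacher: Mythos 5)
Your proposal is correct and follows essentially the same route as the paper: apply the explicit generator formula \eqref{generatorlevy} to $f_n$, bound the jump-integrand by $2\|f_n\|\mathds{1}_{|y|>1}+\|f_n''\|y^2\mathds{1}_{|y|\leq 1}$ via Taylor, note that $f_n$, $f_n'$, $f_n''$ are uniformly bounded in $n$ and converge pointwise to $e^{iux}$, $iue^{iux}$, $-u^2e^{iux}$, and conclude by dominated convergence against the finite measure $\mu$. (The only cosmetic slip is the bound $\|f_n'\|\leq\|h'\|$ on $\{|y|>1\}$, where in fact $\|f_n'\|\leq |u|\,\|h\|+\|h'\|$ --- and that term is not even needed there since the indicator removes it; this does not affect the argument.)
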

\begin{proof}
It is clear that $f_n\in C_c^\infty(\RR;\bC)$. From
\eqref{generatorlevy} we obtain
\begin{eqnarray*}
  \int_\RR A^Lf_n(x) \mu(dx)
&=& \gamma_L \int_{\RR} f_n'(x) \mu (dx) + \frac{\sigma_L^2}{2} \int_{\RR} f_n''(x) \,\mu(dx) \\
&&  + \int_{\bR} \int_{\RR}(f_n(x+y) - f_n(x) - f_n'(x)
y\mathds{1}_{|y|\leq 1}) \nu_L(dy)\,\mu(dx).
\end{eqnarray*} By
Taylor's formula, there are $\zeta_1, \zeta_2\in [-|y|,|y|]$ such
that
\begin{eqnarray*}
\lefteqn{\left| f_n(x+y) - f_n(x) - f_n'(x) y \mathds{1}_{|y|\leq 1}
\right|} \nonumber \\
& \leq & \left| f_n(x+y) - f_n(x) \right| \mathds{1}_{|y| > 1} +
2^{-1} \left[ |(\Re f_n'') (x+\zeta_1)| + |(\Im f_n'') (x+\zeta_2)| \right]   y^2 \mathds{1}_{|y|\leq 1} \nonumber \\
& \leq & 2 \| f_n\| \mathds{1}_{|y|> 1} + \|f_n''\| y^2
\mathds{1}_{|y|\leq 1} .
\end{eqnarray*}
Computing the first two derivatives of $f_n$ one easily sees that
they are uniformly bounded in $n$. Since additionally $\lim_{n\to
\infty} f_n'(x)=iue^{iux}$ and $\lim_{n\to \infty}
f_n''(x)=-u^2e^{iux}$ we obtain via dominated convergence
\begin{eqnarray*}
\lim_{n\to\infty} \int_{\bR} A^L f_n(x) \, \mu(dx) &=& \gamma_L
\int_{\RR} iue^{iux} \,\mu (dx) - \frac{\sigma_L^2}{2} \int_{\RR}
u^2 e^{iux}\,\mu(dx) \\
& & + \int_{\bR} \int_{\bR} \left( e^{iu(x+y)} -e^{iux} -
iue^{iux}y\mathds{1}_{|y|\leq 1} \right) \, \nu_L(dy) \,
\mu(dx) ,\\
\end{eqnarray*}
which gives the claim.
 \end{proof}

\begin{proof}[Proof of Theorem \ref{thm-generatorlimit}]
Since $\int_0^t \cE(U)_{s-} \, dL_s$ converges almost surely to the
finite random variable $V_\infty$ as $t\to\infty$, $\mu =
\cL(V_\infty)$ is the unique stationary marginal distribution and
hence invariant law of $V$ by \cite[Thms. 2.1 and
3.6]{behmelindnermaller11}. Equation \eqref{eq-relation1} then
follows directly from \eqref{generatorstat},
\eqref{generatorinteq} and Lemma \ref{lemma-generatorlimit}.\\
To show \eqref{eq-relation2}, observe that by Taylor's formula there
are $\zeta_1, \zeta_2\in [-|xy|,|xy|]$ such that
\begin{eqnarray*}
\lefteqn{\left|f_n(x+xy) - f_n(x) - xy f_n'(x) \mathds{1}_{|y|\leq
1} \right|} \nonumber \\
& \leq & \left| f_n(x+xy) - f_n(x)\right| \mathds{1}_{|y|>1} +
2^{-1} \left[ |(\Re f_n'') (x+\zeta_1)| + |(\Im f_n'') (x+\zeta_2)| \,\right]   x^2 y^2 \mathds{1}_{|y|\leq 1} \nonumber \\
& \leq & 2 \|f_n\| \mathds{1}_{|y|>1} + \|f_n''\| x^2 y^2
\mathds{1}_{|y|\leq 1}.
\end{eqnarray*}
Equation \eqref{eq-relation2} then follows directly from
\eqref{eq-relation1} by dominated convergence and Fubini's theorem,
observing as in the proof of Lemma \ref{lemma-generatorlimit} that
$f_n$ and its first two derivatives are uniformly bounded in $n$.
Finally, Equations \eqref{eq-relation4} and \eqref{eq-relation3} are
immediate consequences of \eqref{eq-relation2}.
\end{proof}

For GOU processes driven by $(\xi,\eta)$, Theorem
\ref{thm-generatorlimit} gives the following.

\begin{corollary} \label{cor-generatorlimit}
Let $(\xi_t)_{t\geq 0}$ and $(\eta_t)_{t\geq 0}$ be two independent
L\'evy processes such that $V_\infty=\int_0^\infty
e^{-\xi_{s-}}d\eta_s$ converges to a finite random variable.
Then $\mu=\cL(V_\infty)$ is the invariant law of the GOU process $(V_t)_{t\geq 0}$ driven by $(\xi, \eta)^T$ as defined in \eqref{GOUdef}.\\
Let $h\in C^\infty_c(\RR)$ such that $h(x)=1$ for $|x|\leq 1$ and
$h(x)=0$ for $|x|\geq 2$ and set $h_n(x):=h(\frac{x}{n})$ and
$f(x)=e^{iux}$, $f_n(x)=f(x)h_n(x)$ for $u\in \bR$. Then
\begin{eqnarray}
\psi_\eta(u)\phi_{V_{\infty}}(u)&=&\lim_{n\to \infty} \left(
\gamma_\xi \int_{\RR} xf_n'(x) \,\mu (dx) - \frac{\sigma_\xi^2}{2}
\int_{\RR} (x^2f_n''(x) + xf_n'(x) )\,\mu(dx) \right. \label{eq-relation5}\\
&& \left. - \int_\bR \int_{\RR} (f_n(xe^{-y}) - f_n(x) + xyf_n'(x)
\mathds{1}_{|y|\leq 1}) \nu_\xi(dy) \mu(dx)\right). \nonumber
\end{eqnarray}
If additionally $E [V_\infty^2] < \infty$, then
\begin{eqnarray}
\psi_\eta(u) \phi_{V_\infty} (u) & = & \gamma_\xi u
\phi_{V_\infty}'(u) - \frac{\sigma_\xi^2}{2} \left( u^2
\phi_{V_\infty}''(u) + u \phi_{V_\infty}'(u)\right) \label{eq-relation6} \\
& & - \int_{\bR} \left( \phi_{V_\infty} (u e^{-y}) -
\phi_{V_\infty}(u) + u y \phi_{V_\infty}'(u) \mathds{1}_{|y|\leq 1}
\right) \, \nu_\xi(dy). \nonumber
\end{eqnarray}
\end{corollary}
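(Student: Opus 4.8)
The plan is to replay the proof of Theorem~\ref{thm-generatorlimit}, but with the generator of $V$ written in terms of $(\xi,\eta)$ as in Corollary~\ref{coro-generatorGOUxieta} rather than in terms of $(U,L)$. First I would check that the hypotheses of Theorem~\ref{thm-generatorlimit} hold in the present setting. By \eqref{eq-def-UL}, independence of $\xi$ and $\eta$ forces $L=\eta$, hence $\psi_L=\psi_\eta$, and $\nu_U((-\infty,-1])=0$, in particular $\nu_U(\{-1\})=0$; moreover the upper line of \eqref{eq-def-UL} is equivalent to $\cE(U)_{s-}=e^{-\xi_{s-}}$, so that $\int_0^\infty \cE(U)_{s-}\,dL_s=\int_0^\infty e^{-\xi_{s-}}\,d\eta_s=V_\infty$ converges to a finite random variable. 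Thus Theorem~\ref{thm-generatorlimit} (together with \cite[Thms.~2.1 and~3.6]{behmelindnermaller11}) already yields that $\mu=\cL(V_\infty)$ is the invariant law of the GOU process $V$ driven by $(\xi,\eta)^T$.

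For \eqref{eq-relation5} I would use that $f_n\in C_c^\infty(\RR;\bC)\subset S(\RR;\bC)\subset D(A^V;\bC)$ by Corollary~\ref{coro-generatorGOUxieta}, so that invariance of $\mu$ and \eqref{generatorstat} give $\int_\RR A^V f_n(x)\,\mu(dx)=0$. Inserting the representation \eqref{generatorGOU-xieta-ind} of $A^V$, which remains valid for complex-valued $f$ by splitting into real and imaginary parts, isolating the $A^\eta$-term, and letting $n\to\infty$ while applying Lemma~\ref{lemma-generatorlimit} to $A^\eta$ (which gives $\int_\RR A^\eta f_n\,d\mu\to\psi_\eta(u)\widehat\mu(u)=\psi_\eta(u)\phi_{V_\infty}(u)$) then produces exactly \eqref{eq-relation5}, with the signs of the $\gamma_\xi$-, $\sigma_\xi^2$- and $\nu_\xi$-contributions dictated by \eqref{generatorGOU-xieta-ind}.

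Finally, assuming $E[V_\infty^2]=\int_\RR x^2\,\mu(dx)<\infty$, I would pass the limit in $n$ inside each integral in \eqref{eq-relation5}. As in the proof of Theorem~\ref{thm-generatorlimit}, $f_n$ and its first two derivatives are uniformly bounded in $n$, with $f_n'(x)\to iue^{iux}$ and $f_n''(x)\to -u^2e^{iux}$ pointwise; dominated convergence with the $\mu$-integrable majorants $(|u|+\|h'\|)|x|$ and $Cx^2$ then gives $\int x f_n'\,d\mu\to iuE[V_\infty e^{iuV_\infty}]=u\phi_{V_\infty}'(u)$ and $\int(x^2 f_n''+x f_n')\,d\mu\to u^2\phi_{V_\infty}''(u)+u\phi_{V_\infty}'(u)$. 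For the $\nu_\xi$-integral term, a second-order Taylor expansion of $f_n$ combined with the elementary estimates $|e^{-y}-1+y|\le Cy^2$ and $|e^{-y}-1|\le C|y|$ for $|y|\le1$ yields, in analogy with \eqref{eq-majorant1}, a majorant of the form $C(|x|+x^2)y^2\mathds{1}_{|y|\le1}+2\|f_n\|\mathds{1}_{|y|>1}$, uniform in $n$; this is integrable with respect to $\mu\otimes\nu_\xi$ precisely because $E[V_\infty^2]<\infty$ and $\nu_\xi$ is a L\'evy measure, so dominated convergence and Fubini turn the term into $-\int_\RR(\phi_{V_\infty}(ue^{-y})-\phi_{V_\infty}(u)+uy\phi_{V_\infty}'(u)\mathds{1}_{|y|\le1})\,\nu_\xi(dy)$, and \eqref{eq-relation6} follows.

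I expect this last majorant to be the only genuinely delicate step: as $y\to-\infty$ the argument $xe^{-y}$ blows up, so there only the crude bound $2\|f_n\|$ is available, whereas near $y=0$ one must exploit the cancellation in $xe^{-y}-x+xy=x(e^{-y}-1+y)$ to recover the factor $y^2$ that makes the integral against $\nu_\xi$ converge; once these two regimes are handled, the rest is routine dominated convergence and an application of Fubini justified by the same majorant.
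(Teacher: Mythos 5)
Your proposal is correct and follows exactly the second of the two routes the paper itself indicates for this corollary, namely working directly from the generator formula \eqref{generatorGOU-xieta-ind} and repeating the arguments of the proof of Theorem \ref{thm-generatorlimit} (invariance plus Lemma \ref{lemma-generatorlimit} for \eqref{eq-relation5}, then dominated convergence and Fubini with the Taylor/cancellation majorant for \eqref{eq-relation6}). The details you supply, in particular the uniform boundedness of $f_n$, $f_n'$, $f_n''$ in $n$ and the $C(|x|+x^2)y^2$ bound exploiting $e^{-y}-1+y=O(y^2)$, are precisely what is needed and match the estimates used in the paper's proof of Theorem \ref{thm-generatorlimit}.
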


\begin{proof}
 This follows directly from Theorem \ref{thm-generatorlimit} and the relations between $(U,L)$ and
 $(\xi,\eta)$ as given in \eqref{eq-def-UL} and \cite[Lem.
 3.4]{behmelindnermaller11}, or alternatively using
 \eqref{generatorGOU-xieta-ind} and arguments as in the proof of
 Theorem \ref{thm-generatorlimit}.
\end{proof}

Observe that for $\xi$ being a compound Poisson process Equation \eqref{eq-relation6} immediately gives
\eqref{eq-motivation}.

\begin{remark}
Carmona \cite[Thm. 2]{carmona-unpublished} obtains a formula related
to \eqref{eq-relation6} under certain, more restrictive assumptions.
In particular, it is assumed in \cite{carmona-unpublished} that
$e^{\xi_t}$ admits a strictly positive density on some interval
$(0,r_t)$ for some $r_t > 0$. In the special case that $\eta$ is a
compound Poisson process without negative jumps and $\xi$ is a
Brownian motion with drift, formula \eqref{eq-relation6} has already
been obtained by Nilsen and Paulsen \cite[Prop. 2]{NilsenPaulsen},
stated for Laplace transforms.
\end{remark}

\begin{remark}
Let $\eta$ be a subordinator, $\xi$ a L\'evy process independent of
$\eta$, and suppose that $V_\infty := \int_0^\infty e^{-\xi_{s-}} \,
d\eta_s$ is almost surely finite. Then $V_\infty \geq 0$, and we can
also use Laplace transforms in the above derivation. More precisely,
let $(U,L)^T$ be given by \eqref{eq-def-UL}, so that $L=\eta$ by
independence and $e^{-\xi_t} = \cE(U)_t$, where $\nu_U((-\infty,-1])
= 0$. Denote the Laplace transforms of $\eta=L$ and $V_\infty$ for
$u\geq 0$ by $\LL_\eta(u) = \LL_L(u) = E [e^{-u \eta_1}] =
\phi_\eta(iu)$ and $\LL_{V_\infty} (u) = E [e^{-u V_\infty}]$,
respectively. Let $f$ be a function in $S(\RR)$ with $f(x)=e^{-ux},
x\geq 0$, then $f$ is in $D(A^V)$ for $u>0$ and a direct computation starting
from \eqref{eq-intgenerator} yields the following analogues of
\eqref{eq-relation2} and \eqref{eq-relation6} without any further
moment restrictions on the distribution of $V_\infty$.
 \begin{eqnarray*}
\lefteqn{\log \LL_L(u)= \log \LL_\eta(u)}\\
&=& u\gamma_U \frac{E\left[V_\infty e^{-uV_\infty} \right]}{\LL_{V_\infty}(u)}
-\frac{\sigma_U^2 u^2}{2} \frac{E\left[V_\infty^2 e^{-uV_\infty} \right]}{\LL_{V_\infty}(u)}\\
&& - \int_{(-1,\infty)}
\left(\frac{\LL_{V_\infty}(u(1+y))}{\LL_{V_\infty}(u)}
- 1+ u\frac{E\left[V_\infty e^{-uV_\infty} \right]}{\LL_{V_\infty}(u)} y \mathds{1}_{|y|\leq 1}\right) \nu_U(dy)\\
&=& -u\gamma_\xi \frac{E\left[V_\infty e^{-uV_\infty} \right]}{\LL_{V_\infty}(u)}
-\frac{\sigma_\xi^2 }{2} \left( \frac{E\left[V_\infty^2 e^{-uV_\infty} \right]}{\LL_{V_\infty}(u)}u^2 -
\frac{E\left[V_\infty e^{-uV_\infty} \right]}{\LL_{V_\infty}(u)} u \right)\\
&& -\int_{\RR} \left(\frac{\LL_{V_\infty}(ue^{-y})}{\LL_{V_\infty}(u)}
- 1- u\frac{E\left[V_\infty e^{-uV_\infty} \right]}{\LL_{V_\infty}(u)} y \mathds{1}_{|y|\leq 1}\right) \nu_\xi(dy), \quad u\geq 0.
\end{eqnarray*}
\end{remark}

The formula given in Corollary \ref{cor-generatorlimit} will be
useful in determining $\law(\eta_1)$ from $\law(V_\infty)$ and
$\law(\xi_1)$ as observed in Theorem \ref{thm-injectivity} below.
For the determination of $\law(\xi_1)$ from $\law(V_\infty)$ and
$\law(\eta_1)$, the following relation between the characteristic
triplets of $\xi$, $L$ and the characteristic function of $\log
|V_\infty|$ will be helpful.

\begin{theorem}\label{thm-generatorlimit-2}
 Let $(\xi_t)_{t\geq 0}$ and $(\eta_t)_{t\geq 0}$ be two independent L\'evy processes such that
$V_\infty=\int_0^\infty e^{-\xi_{s-}}d\eta_s$ converges to a finite
random variable and such that $\eta$ is not the zero process.
Then $\mu=\cL(V_\infty)$ is the invariant law of the GOU process $(V_t)_{t\geq 0}$ driven by $(\xi, \eta)^T$.\\
Let $h\in C^\infty_c(\RR)$ such that $h(x)=1$ for $|x|\leq 1$ and
$h(x)=0$ for $|x|\geq 2$ and set $h_n(x):=h(\frac{x}{n})$ and for
$x\neq 0$ and $u\in \bR$ define $f(x)=e^{iu\log |x| }$ and
$f_n(x)=e^{iu\log |x|}h_n(\log |x|)$ with $f_n(0) = 0$. Then
\begin{eqnarray}
\psi_{-\xi}(u)\phi_{\log |V_{\infty}|}(u)&=&- \lim_{n\to \infty} \int_{\RR} A^\eta f_n(x) \mu(dx)\label{eq-relation7}\\
&=&\lim_{n\to \infty} \left( -\gamma_\eta \int_{\RR} f_n'(x) \mu (dx) - \frac{\sigma_\eta^2}{2} \int_\RR f_n''(x) \mu(dx) \right. \label{eq-relation8}\\
&&\left. - \int_\bR \int_{\RR} (f_n(x+y) -
f_n(x)-f_n'(x)y\mathds{1}_{|y|\leq 1} \nu_\eta(dy) \mu(dx) \right).
\nonumber
\end{eqnarray}
If additionally $E [V_\infty^{-2}] < \infty$, then
\begin{eqnarray}
\lefteqn{\psi_{-\xi}(u)\phi_{\log |V_{\infty}|}(u)}
\label{eq-relation9} \\
&=& - iu \gamma_\eta E \left[ V_\infty^{-1}
e^{i u \log |V_\infty|} \right] + \frac{\sigma_\eta^2}{2} (iu + u^2)
E \left[ V_\infty^{-2} e^{iu \log
|V_\infty|} \right] \nonumber \\
& & - \int_{\bR} \left( E\left[ e^{iu \log |V_\infty + y|} \right] -
E \left[ e^{iu \log |V_\infty|} \right] - iuy E \left[ V_\infty^{-1}
e^{iu \log |V_\infty|} \right] \mathds{1}_{|y|\leq 1} \right) \,
\nu_\eta(dy). \nonumber
\end{eqnarray}
\end{theorem}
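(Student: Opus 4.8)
The plan is to mimic the structure of the proof of Theorem~\ref{thm-generatorlimit}, but now applying the stationarity identity \eqref{generatorstat} to test functions built out of $\log|x|$ rather than $x$ itself, so that the $-\xi$-part of the generator \eqref{generatorGOU-xieta-ind-short} produces the characteristic exponent $\psi_{-\xi}$. The key observation is that $\mu = \cL(V_\infty)$ is the invariant law of the GOU process (by \cite[Thms.~2.1 and 3.6]{behmelindnermaller11}), so $\int_\bR A^V f_n(x)\,\mu(dx) = 0$ for every $f_n\in D(A^V)$. First I would verify that the truncated functions $f_n(x) = e^{iu\log|x|}h_n(\log|x|)$ with $f_n(0)=0$ belong to $S(\bR;\bC)$ — this requires checking that $f_n$ is $C^2$ near the origin (it vanishes identically on a neighbourhood of $0$ since $h_n(\log|x|)=0$ for $|x|$ small) and that $|xf_n'(x)| + |x^2f_n''(x)|\to 0$ as $|x|\to\infty$ (immediate, since $f_n$ has compact support in $\log|x|$). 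Hence $f_n\in D(A^V)$ by Corollary~\ref{coro-generatorGOUxieta}, and also $\tilde f_n, \tilde{\tilde f}_n\in C_0^2(\bR)$ so that \eqref{generatorGOU-xieta-ind-short} applies.

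Next I would insert $f_n$ into \eqref{generatorGOU-xieta-ind-short} and integrate against $\mu$. Using $\int_\bR A^V f_n\,d\mu = 0$, this gives
\begin{equation*}
\int_{\bR} A^\eta f_n(x)\,\mu(dx) = -\int_{(0,\infty)} A^{-\xi}\tilde f_n(\log x)\,\mu(dx) - \int_{(-\infty,0)} A^{-\xi}\tilde{\tilde f}_n(\log|x|)\,\mu(dx).
\end{equation*}
Now by construction $\tilde f_n(y) = e^{iuy}h_n(y)$ and $\tilde{\tilde f}_n(y) = e^{iuy}h_n(y)$ are exactly the functions to which Lemma~\ref{lemma-generatorlimit} applies (with $\mu$ replaced by the pushforward of $\mu|_{(0,\infty)}$, resp. $\mu|_{(-\infty,0)}$, under $x\mapsto\log|x|$); here one uses that $\mu(\{0\})=0$, which holds because $\eta$ is not the zero process so $V_\infty$ has no atom at $0$ (the atom $\{0\}$ would force $V_\infty\stackrel{d}{=}e^{-\Delta\xi}V_\infty' $ with $\eta\equiv 0$ on an interval, contradicting nontriviality — this needs a short argument, e.g. via the fixed-point equation or the known fact that $V_\infty$ is either degenerate or continuous). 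Taking $n\to\infty$, Lemma~\ref{lemma-generatorlimit} yields that the right-hand side converges to $\psi_{-\xi}(u)\bigl(\int_{(0,\infty)} e^{iu\log x}\,\mu(dx) + \int_{(-\infty,0)} e^{iu\log|x|}\,\mu(dx)\bigr) = \psi_{-\xi}(u)\,E[e^{iu\log|V_\infty|}] = \psi_{-\xi}(u)\,\phi_{\log|V_\infty|}(u)$. This establishes \eqref{eq-relation7}, and \eqref{eq-relation8} is just the explicit form of $-\int A^\eta f_n\,d\mu$ from \eqref{generatorlevy}.

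For the final formula \eqref{eq-relation9} under the extra hypothesis $E[V_\infty^{-2}]<\infty$, I would pass to the limit inside the three terms of \eqref{eq-relation8} by dominated convergence. One has $f_n'(x) = \tfrac{d}{dx}e^{iu\log|x|}h_n(\log|x|) \to \tfrac{iu}{x}e^{iu\log|x|}$ and $f_n''(x)\to \tfrac{-iu - u^2}{x^2}e^{iu\log|x|}$ pointwise for $x\neq 0$, with $|xf_n'(x)|$, $|x^2f_n''(x)|$ uniformly bounded in $n$ (cf.\ $K_1(f_n)$ bounded, as in step (vi) of Theorem~\ref{thm-generator-GOU}); since $E[V_\infty^{-2}]<\infty$ gives $\int |x|^{-2}\mu(dx)<\infty$, dominated convergence applies to the drift and Gaussian terms, producing $-iu\gamma_\eta E[V_\infty^{-1}e^{iu\log|V_\infty|}]$ and $\tfrac{\sigma_\eta^2}{2}(iu+u^2)E[V_\infty^{-2}e^{iu\log|V_\infty|}]$. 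For the integral term, I would bound $|f_n(x+y)-f_n(x)-f_n'(x)y\mathds{1}_{|y|\le1}|$ by a $\nu_\eta\otimes\mu$-integrable majorant of the form $2\|f_n\|\mathds{1}_{|y|>1} + \|f_n''\,x^{-2}\text{-type bound}\|\,y^2\mathds{1}_{|y|\le1}$ — this is the step requiring the most care, since near $x=0$ the function $\log|x|$ is singular, but the finiteness of $E[V_\infty^{-2}]$ together with a second-order Taylor estimate on $\log|x+y| - \log|x|$ controls it. I expect this last majorization — getting a uniform-in-$n$, $\nu_\eta\otimes\mu$-integrable bound that handles both the $|y|>1$ regime and the behaviour near $x=0$ — to be the main technical obstacle; everything else is a routine transcription of the arguments already developed for Theorem~\ref{thm-generatorlimit} and Lemma~\ref{lemma-generatorlimit}.
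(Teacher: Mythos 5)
Your proposal is correct and follows essentially the same route as the paper: decompose $A^V f_n$ via \eqref{generatorGOU-xieta-ind-short}, use stationarity and $\mu(\{0\})=0$ (the paper simply cites \cite[Thm.~2.2]{bertoinlindnermaller08} for this), push $\mu|_{(0,\infty)}$ and $\mu|_{(-\infty,0)}$ forward under $x\mapsto\log|x|$ so that Lemma~\ref{lemma-generatorlimit} applies, and then pass to the limit in \eqref{eq-relation8} by dominated convergence using $E[V_\infty^{-2}]<\infty$. The one step you flag as the main obstacle is handled in the paper exactly as you anticipate: writing $f_n(x+y)-f_n(x)-f_n'(x)y$ in terms of $\tilde f_n$ at the point $\log|x|$ and splitting the error into the second-order Taylor remainder of $\tilde f_n$ with increment $yx^{-1}$ plus the correction coming from $\log|1+yx^{-1}|-yx^{-1}$, which gives a majorant of the form $(\|\tilde f_n''\|+C\|\tilde f_n'\|)\,y^2x^{-2}$, uniform in $n$ and $(\nu_\eta\otimes\mu)$-integrable off the $\mu$-null set $\{xy(x+y)=0\}$.
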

\begin{proof}
 Observe that obviously $f_n\in C_c^\infty(\RR;\bC)$ and thus $f_n\in D(A^V;\bC)\cap D(A^\eta;\bC)$.
On the other hand we obtain for $\tilde{f}(x)=f(e^x)$ and
$\tilde{f}_n(x)=f_n(e^x)$ that $\tilde{f}(x)= e^{iux}$ and
$\tilde{f}_n(x)=\tilde{f}(x)h_n(x)$ and hence $\tilde{f}_n\in
C_c^\infty(\bR;\bC)\subset D(A^{-\xi};\bC)$. Similarly for
$\tilde{\tilde{f}}(x)=f(-e^x)$ and
$\tilde{\tilde{f}}_n(x)=f_n(-e^x)$ we have $\tilde{\tilde{f}}(x)=
e^{iux}$ and $\tilde{\tilde{f}}_n(x)=\tilde{\tilde{f}}(x)h_n(x)$ and
also
$\tilde{\tilde{f}}_n\in C_c^\infty(\bR;\bC)\subset D(A^{-\xi};\bC)$. \\
Since $\mu(\{0\})=0$ by \cite[Thm. 2.2]{bertoinlindnermaller08}, we
obtain from \eqref{generatorGOU-xieta-ind-short} and
\eqref{generatorstat}
\begin{eqnarray*}
 0&=& \int_\RR A^Vf_n(x)\mu(dx)\\
&=& \int_{\RR} A^\eta f_n(x)\mu(dx)+ \int_{(0,\infty)} A^{-\xi} \tilde{f}_n(\log x)\mu(dx)
+ \int_{(-\infty,0)} A^{-\xi} \tilde{\tilde{f}}_n(\log |x|)\mu(dx).
\end{eqnarray*}
Setting $S_1: (0,\infty)\to \RR, x\mapsto \log x$, and $S_2:
(-\infty,0)\to \RR, x\mapsto \log (-x)$, we compute using Lemma
\ref{lemma-generatorlimit}
\begin{eqnarray*}
\lefteqn{\lim_{n\to\infty}\left(\int_{(0,\infty)} A^{-\xi} \tilde{f}_n(\log x)\mu(dx)+ \int_{(-\infty,0)} A^{-\xi} \tilde{\tilde{f}}_n(\log |x|)\mu(dx) \right)}\\
&=& \lim_{n\to\infty} \left( \int_{\RR} A^{-\xi} \tilde{f}_n(y) dS_1(\mu_{|(0,\infty)})(y)+ \int_{\RR} A^{-\xi} \tilde{\tilde{f}}_n(y)dS_2(\mu_{|(-\infty,0)})(y) \right)\\
&=& \psi_{-\xi}(u) \left(\int_{\RR} e^{iuy} dS_1(\mu_{|(0,\infty)})(y)+ \int_{\RR} e^{iuy} dS_2(\mu_{|(-\infty,0)})(y)\right)\\
&=& \psi_{-\xi}(u) \left(\int_{(0,\infty)} e^{iu\log x} \mu(dx)+ \int_{(-\infty,0)} e^{iu\log |x|} \mu(dx)\right)\\
&=& \psi_{-\xi}(u) \phi_{\log |V_{\infty}|}(u)
\end{eqnarray*}
which yields \eqref{eq-relation7} and \eqref{eq-relation8} via
\eqref{generatorlevy}.

Now assume that $E [V_\infty^{-2}] < \infty$. We have
$\tilde{f}_n(x) = e^{iux} h_n(x)$ and $f_n(x) = \tilde{f}_n(\log
|x|)$ for all $x\in \bR$. In particular, $f_n'(x) = x^{-1}
\tilde{f}_n'( \log |x|)$ and $f_n''(x) = x^{-2} ( \tilde{f}_n''(\log
|x|) - \tilde{f}_n' (\log |x|))$ for $x\neq 0$. For $|y|>1$, we
further have $|f_n(x+y) - f_n(x)| \leq 2 \|h\| < \infty$, and for
$|y|\leq 1$ such that $xy (x+y) \neq 0$ there are $\zeta_1,\ldots,
\zeta_4\in \bR$ by Taylor's theorem such that
\begin{eqnarray*}
\lefteqn{ \left| f_n(x+y) - f_n(x) - f_n'(x) y \right|} \\
& = & \left| \tilde{f}_n ( \log |x+y|) - \tilde{f}_n (\log |x|) -
\tilde{f}_n'( \log |x|)  yx^{-1} \right| \\
& \leq & \left| \tilde{f}_n ((\log |x|) + yx^{-1}) - \tilde{f}_n (
\log |x|) - \tilde{f}_n' (\log |x|) yx^{-1} \right| \\
& & + \left| \tilde{f}_n ( \log |x| + \log |1+ yx^{-1}|) -
\tilde{f}_n ((\log|x|) + yx^{-1}) \right| \\
& = & 2^{-1} \left| (\Re \tilde{f}_n'')((\log |x|) + \zeta_1) y^2
x^{-2}\right| + \left| (\Re \tilde{f}_n')((\log |x|) + yx^{-1} +
\zeta_2)  \left( \log |1 + yx^{-1}| - yx^{-1} \right) \right| \\
& & + 2^{-1} \left| (\Im \tilde{f}_n'')((\log |x|) + \zeta_3) y^2
x^{-2}\right| + \left| (\Im \tilde{f}_n')((\log |x|) + yx^{-1} +
\zeta_4)  \left( \log |1 + yx^{-1}| - yx^{-1} \right) \right| \\
& \leq &  \| \tilde{f}_n''\| y^2 x^{-2} + \| \tilde{f}_n'\| C y^2
x^{-2}
\end{eqnarray*}
for some universal constant $C$. Since $\|\tilde{f}_n\|$,
$\|\tilde{f}_n'\|$ and $\| \tilde{f}_n''\|$ are uniformly bounded in
$n$, since $\mu$ is continuous (cf. \cite[Thm.
2]{bertoinlindnermaller08}) so that $(\nu \otimes \mu) (\{(x,y)^T\in
\bR^2 : xy (x+y) = 0\} = 0$, since $\int_{\bR} x^{-2} \mu(dx) <
\infty$ by assumption and since $f_n$, $f_n'$ and $f_n''$ converge
on $\bR \setminus \{0\}$ to $f$, $f'$ and $f''$, respectively, by
dominated convergence the right hand side of \eqref{eq-relation8} is
equal to $$\int_{\bR} \left( - \gamma_\eta f'(x) -
\frac{\sigma_\eta^2}{2} f''(x) - \int_{\bR} \left(f(x+y) - f(x) -
f'(x) y \mathds{1}_{|y|\leq 1}\right) \, \nu_\eta(dy) \right)
\mu(dx),$$ which gives \eqref{eq-relation9}.
\end{proof}

\section{Injectivity} \label{sec-inj}
\setcounter{equation}{0}

Let $\xi=(\xi_t)_{t\geq 0}$ and $(\eta_t)_{t\geq 0}$ be two
independent L\'evy processes such that $V_\infty := \int_0^\infty
e^{-\xi_{s-}} \, d\eta_s$ converges almost surely. By \cite[Thm.
2]{ericksonmaller05}, this implies that $\xi$ drifts to $+\infty$.
As in the introduction, for a L\'evy process $(\xi_t)_{t\geq 0}$
such that $\xi_t \to +\infty$ a.s. as $t\to\infty$ denote
$$D_\xi := \{ \cL (\eta_1) : \eta \;\mbox{ L\'evy process, independent
of }\xi, \;\mbox{such that } \int_0^\infty e^{-\xi_{s-}} \, d\eta_s
\;\mbox{converges a.s.}\}$$ and consider the mapping
$$
\Phi_\xi : D_\xi   \to  \mathcal{P}(\bR) ,\quad  \cL(\eta_1) \mapsto
\cL \left( \int_0^\infty e^{-\xi_{s-}} \, d\eta_s \right), \quad
\mbox{where $\eta$ and $\xi$ are independent.}
$$
Here $\mathcal{P}(\bR)$ denotes the set of probability distributions
on $(\bR,\cB_1)$. For a L\'evy process $(\eta_t)_{t \geq 0}$ denote
further
$$\tilde{D}_\eta := \{ \cL (\xi_1) : \xi \;\mbox{ L\'evy process, independent
of }\eta, \;\mbox{such that } \int_0^\infty e^{-\xi_{s-}} \, d\eta_s
\;\mbox{converges a.s.}\}$$ and define $\tilde{\Phi}_\eta$ by
$$\tilde{\Phi}_\eta: \tilde{D}_\eta \to \mathcal{P}(\bR), \quad \law(\xi_1)
\mapsto  \cL \left( \int_0^\infty e^{-\xi_{s-}} \, d\eta_s \right),
\quad \mbox{where $\eta$ and $\xi$ are independent.}$$

We are interested in injectivity of the mappings $\Phi_\xi$ and
$\tilde{\Phi}_\eta$, or at least in injectivity of these mappings
when restricted to certain subsets. A key result for these
investigations will be the following theorem, which follows
immediately from \eqref{eq-relation5} and \eqref{eq-relation8}, by
dividing by $\phi_{V_\infty}(u)$ and $\phi_{\log |V_\infty|}(u)$
when different from zero, which is always the case for $u$ in a
neighborhood of zero.

\begin{theorem} \label{thm-injectivity}
Let $(\xi_t)_{t\geq 0}$ and $(\eta_t)_{t\geq 0}$ be two independent
L\'evy processes such that $V_\infty := \int_0^\infty e^{-\xi_{s-}}
\, d\eta_s$ converges almost surely. If $\phi_{V_\infty} (u) \neq 0$
for $u$ from a dense subset of $\bR$, or if $\law(\eta_1)$ is
uniquely determined by the values of its characteristic function in
a neighborhood of the origin, then $\law(\eta_1)$ is uniquely
determined by $\law(V_\infty)$ and $\law(\xi_1)$. Similarly, if
$\eta$ is not the zero process and $\phi_{\log |V_\infty|}(u) \neq
0$ for $u$ from a dense subset of $\bR$, or if $\law(\xi_1)$ is
uniquely determined by the values of its characteristic function in
a neighborhood of the origin, then $\law(\xi_1)$ is uniquely
determined by $\law(V_\infty)$ and $\law(\eta_1)$.
\end{theorem}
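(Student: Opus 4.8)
The plan is to derive this as a direct corollary of the characteristic-function identities established in Corollary~\ref{cor-generatorlimit} and Theorem~\ref{thm-generatorlimit-2}. The starting observation is that since $V_\infty$ is a genuine $\bR$-valued random variable, $\phi_{V_\infty}$ is continuous with $\phi_{V_\infty}(0)=1$, hence $\phi_{V_\infty}(u)\neq 0$ for $u$ in some neighbourhood $(-\delta,\delta)$ of the origin; likewise $\phi_{\log|V_\infty|}$ is continuous (this uses $\mu(\{0\})=0$, which holds by \cite[Thm.~2.2]{bertoinlindnermaller08}, so that $\log|V_\infty|$ is a.s.\ well-defined and finite-valued) and hence nonzero near $0$. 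This is what makes the phrase ``which is always the case for $u$ in a neighborhood of zero'' in the lemma preamble legitimate, and it is the only nontrivial input beyond the already-proved identities.

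First I would treat the claim about $\law(\eta_1)$. Suppose $\eta$ and $\eta'$ are two L\'evy processes, each independent of $\xi$, with $\law(\eta_1),\law(\eta_1')\in D_\xi$ and with the two exponential functionals $V_\infty$, $V_\infty'$ equal in law. Applying Corollary~\ref{cor-generatorlimit} (the limit form \eqref{eq-relation5}, which requires no moment assumption) to each, the right-hand side depends only on $\law(\xi_1)$ and on $\mu=\law(V_\infty)=\law(V_\infty')$; hence $\psi_\eta(u)\phi_{V_\infty}(u)=\psi_{\eta'}(u)\phi_{V_\infty}(u)$ for all $u\in\bR$. Now split into the two stated cases. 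In the first case, $\phi_{V_\infty}(u)\neq 0$ on a dense set $D\subset\bR$, so $\psi_\eta=\psi_{\eta'}$ on $D$; both characteristic exponents are continuous, so they agree on all of $\bR$, whence $\law(\eta_1)=\law(\eta_1')$. In the second case we only know $\phi_{V_\infty}(u)\neq 0$ on some $(-\delta,\delta)$, so we only get $\psi_\eta=\psi_{\eta'}$, equivalently $\phi_{\eta_1}=\phi_{\eta_1'}$, on $(-\delta,\delta)$; but by hypothesis $\law(\eta_1)$ is determined by the values of its characteristic function near the origin, so again $\law(\eta_1)=\law(\eta_1')$. In either case $\Phi_\xi$ separates $\law(\eta_1)$, which is the assertion.

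The argument for $\law(\xi_1)$ is the mirror image, using Theorem~\ref{thm-generatorlimit-2} in place of Corollary~\ref{cor-generatorlimit}. Given two L\'evy processes $\xi$, $\xi'$, each independent of the fixed nonzero $\eta$, with $\law(\xi_1),\law(\xi_1')\in\tilde D_\eta$ and with equal exponential functionals in law, identity \eqref{eq-relation8} (again in its limit form, needing no moment hypothesis, and valid because $\eta$ is not the zero process) shows that its right-hand side depends only on $\law(\eta_1)$ and on $\mu$; hence $\psi_{-\xi}(u)\phi_{\log|V_\infty|}(u)=\psi_{-\xi'}(u)\phi_{\log|V_\infty|}(u)$ for all $u$. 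Dividing by $\phi_{\log|V_\infty|}(u)$ where it is nonzero and repeating the two-case dichotomy --- nonzero on a dense set, so $\psi_{-\xi}=\psi_{-\xi'}$ everywhere by continuity; or nonzero only near $0$, so the exponents agree near $0$ and the extra determinacy hypothesis on $\law(\xi_1)$ closes the gap --- yields $\law((-\xi)_1)=\law((-\xi')_1)$, hence $\law(\xi_1)=\law(\xi_1')$.

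I do not expect any serious obstacle here: the theorem is essentially a packaging of the earlier identities, and the only point needing care is the elementary but necessary remark that $\phi_{V_\infty}$ and $\phi_{\log|V_\infty|}$ are nonvanishing in a neighbourhood of the origin (with the continuity of $\log|V_\infty|$ justified by $\mu(\{0\})=0$), which legitimizes the division and makes the ``characteristic function near the origin'' hypotheses bite. One should also note explicitly that the hypotheses of Corollary~\ref{cor-generatorlimit} and Theorem~\ref{thm-generatorlimit-2} are met under the standing assumption that $V_\infty$ converges a.s.\ --- in particular $\mu=\law(V_\infty)$ is then the invariant law of the relevant GOU process --- so that the identities \eqref{eq-relation5} and \eqref{eq-relation8} are available without further ado.
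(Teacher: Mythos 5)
Your proposal is correct and follows exactly the paper's own argument: the paper derives Theorem \ref{thm-injectivity} immediately from \eqref{eq-relation5} and \eqref{eq-relation8} by dividing by $\phi_{V_\infty}(u)$, respectively $\phi_{\log|V_\infty|}(u)$, where these are nonzero, noting that this always holds in a neighborhood of the origin. Your write-up merely makes explicit the same steps (the right-hand sides depending only on $\mu$ and the other process's triplet, the continuity/nonvanishing near $0$, and the two-case dichotomy), so there is nothing to add.
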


It is well known (e.g. \cite{loeve}) that not every distribution is
characterized by the values of its characteristic function in a
neighborhood of the origin. This remains true for infinitely
divisible distributions. To see this take two different
distributions $\mu_1$ and $\mu_2$ without atoms at $0$ whose characteristic functions
coincide in a neighborhood of the origin and consider the
corresponding compound Poisson distributions with L\'evy measures
$\mu_1$ and $\mu_2$. These are both infinitely divisible and their
characteristic functions $\exp(\hat{\mu}_i(u)-1)$ coincide in a
neighborhood of the origin.

We do not know if the characteristic
function of the stationary distribution of a GOU process cannot
vanish on a non-empty open interval. As shown by Il'inskii
\cite[Cor.~1]{Ilinskii}, a set $A \subset \bR$ is the zero set of
some characteristic function if and only if $A$ is closed, does not
contain 0 and is symmetric with respect to the origin. Hence, a
priori there is no reason why $\phi_{V_\infty}$ appearing in Theorem
\ref{thm-injectivity} should not vanish identically on some
interval.

Still, it is possible to give some sufficient conditions. We start
with the following lemma, which is a minor reformulation of results
in Kawata \cite{kawata1972} and Lucasz \cite{lucasz1970}.

\begin{lemma} \label{lem-zeros}
Let $X$ be a random variable with law $\mu$ and assume that there is
some $\varepsilon > 0$ such that $E e^{\varepsilon X} < \infty$ or
$E e^{-\varepsilon X} < \infty$. Then the characteristic function
$\phi_X=\widehat{\mu}$ cannot be identically zero on  non-empty open
intervals. Furthermore, if $Y$ is another random variable whose
characteristic function coincides with that of $X$ in a neighborhood
of 0, then $\law(Y) = \law(X) = \mu$.
\end{lemma}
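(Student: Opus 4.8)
The plan is to use the classical fact that existence of an exponential moment of $X$ forces the characteristic function $\phi_X$ to extend to a function that is analytic in a horizontal strip of the complex plane containing the real axis, and then invoke the identity theorem for analytic functions. Concretely, suppose $E e^{\varepsilon X} < \infty$ for some $\varepsilon > 0$ (the case $E e^{-\varepsilon X} < \infty$ is symmetric, replacing $X$ by $-X$). Then for every complex $z = u + iv$ with $-\varepsilon < v \le 0$ one has $|e^{izX}| = e^{-vX} \le \max\{1, e^{-vX}\} \le 1 + e^{\varepsilon X}$, which is integrable; a standard argument (differentiating under the integral sign, justified by dominated convergence, or applying Morera's theorem together with Fubini) shows that $z \mapsto E e^{izX}$ is analytic on the open strip $\{-\varepsilon < \Im z < 0\}$ and continuous up to the real axis, hence analytic on a connected open set whose closure contains $\mathbb{R}$. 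Actually it is cleanest to note that $E e^{\varepsilon X} < \infty$ together with $E[1] < \infty$ gives, by interpolation (Hölder or convexity of $t \mapsto \log E e^{tX}$), that $E e^{tX} < \infty$ for all $t \in [0,\varepsilon]$, so $\phi_X$ extends analytically to the full strip $\{-\varepsilon < \Im z < \varepsilon\}$, an open connected set containing $\mathbb{R}$.

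The first claim then follows immediately: if $\phi_X$ were identically zero on a non-empty open interval $(a,b) \subset \mathbb{R}$, then since $(a,b)$ has an accumulation point inside the domain of analyticity, the identity theorem would force $\phi_X \equiv 0$ on the whole strip, in particular $\phi_X(0) = 0$. But $\phi_X(0) = 1$, a contradiction. Hence $\phi_X$ cannot vanish identically on any non-empty open interval.

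For the second claim, suppose $Y$ has $\phi_Y = \phi_X$ on some neighborhood $(-\delta,\delta)$ of $0$. We do not a priori know that $Y$ has an exponential moment, so we cannot analytically continue $\phi_Y$ directly. Instead, the plan is to argue via moments: since $E e^{\varepsilon X} < \infty$, all (ordinary, not just absolute) moments $E X^n$ exist and $\phi_X$ is $C^\infty$ near $0$ with $\phi_X^{(n)}(0) = i^n E X^n$; because $\phi_Y$ agrees with $\phi_X$ on $(-\delta,\delta)$, $\phi_Y$ is likewise $C^\infty$ there with the same derivatives at $0$, so $E Y^n$ exists for every $n$ and equals $E X^n$. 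Now the moments $E X^n$ grow slowly enough to satisfy a Carleman-type condition (indeed, $E e^{\varepsilon X} < \infty$ gives $|E X^n| \le E|X|^n \le C\, n!/\varepsilon^n$ up to the usual split into positive and negative parts, using also $E e^{-\varepsilon' X}$ — here one may instead note directly that the common analytic extension shows the power series of $\phi_X$ about $0$ has positive radius of convergence, so the distribution $\mu$ is determined by its moments), hence $\mu = \law(X)$ is moment-determinate. Since $\law(Y)$ has the same moment sequence, $\law(Y) = \law(X) = \mu$.

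The main obstacle is the second claim, precisely because $Y$ is not assumed to have any exponential moment, so one must pass through the moment problem rather than through analytic continuation of $\phi_Y$. The delicate point to get right is the justification that $\phi_Y$ having the same local behavior as $\phi_X$ at $0$ really transfers moment-determinacy: one needs that the finiteness of an exponential moment of $X$ (equivalently, positive radius of convergence of the Taylor series of $\phi_X$ at $0$) implies $\mu$ is the unique measure with that moment sequence — this is the standard sufficiency of the analyticity/Carleman condition for the Hamburger moment problem — and that $Y$'s moments all exist and coincide, which follows from the $C^\infty$ agreement of the characteristic functions near $0$ (a Taylor-expansion argument, cf. Kawata or Lukács). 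A cleaner alternative for the second claim, if one prefers to avoid the moment problem entirely, is the following: the hypothesis propagates an exponential moment to $Y$ as well. Indeed, one shows that equality of characteristic functions on $(-\delta,\delta)$ already forces $E e^{\eta |Y|} < \infty$ for some $\eta > 0$ — this is exactly the content of the Kawata--Lukács results being cited — after which $\phi_Y$ also extends analytically to a strip, $\phi_X$ and $\phi_Y$ agree on $(-\delta,\delta)$ which accumulates, so by the identity theorem they agree on the common strip, hence on all of $\mathbb{R}$, and therefore $\law(Y) = \law(X)$. Either route works; I would present the moment-problem version as the main line and remark that it is essentially a repackaging of \cite{kawata1972} and \cite{lucasz1970}.
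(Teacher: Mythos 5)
There is a genuine gap, and it occurs right at the start: you treat the one-sided hypothesis as if it were two-sided. The lemma assumes only $E e^{\varepsilon X}<\infty$ \emph{or} $E e^{-\varepsilon X}<\infty$. From $E e^{\varepsilon X}<\infty$, interpolation with $E[1]<\infty$ gives $E e^{tX}<\infty$ only for $t\in[0,\varepsilon]$; equivalently, $g(z):=E e^{izX}$ is defined for $-\varepsilon\leq \Im z\leq 0$, analytic in the open half-strip $-\varepsilon<\Im z<0$ and continuous up to the real axis. It does \emph{not} extend to the full strip $\{-\varepsilon<\Im z<\varepsilon\}$: that would require $E e^{-\delta X}<\infty$ for some $\delta>0$, i.e.\ a light left tail, which is not assumed. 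Consequently $\RR$ lies on the \emph{boundary} of the domain of analyticity, and the plain identity theorem is not available; one needs a boundary-uniqueness theorem for functions analytic in a strip and continuous on its closure, which is exactly what the paper invokes (Kawata, Cor.~1.14.1). Your moment-problem route for the second claim breaks down for the same reason: with only a one-sided exponential moment the left tail of $X$ can be so heavy that already $E X^- =\infty$ (e.g.\ $P(X<-x)\sim (x\log x)^{-1}$ is compatible with $E e^{\varepsilon X}<\infty$), so the moments $E X^n$ need not exist, $\phi_X$ need not even be differentiable at $0$, and moment-determinacy cannot be formulated. The hedge ``using also $E e^{-\varepsilon' X}$'' silently reintroduces the two-sided assumption. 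This one-sidedness is not a technicality: the lemma is applied in the paper precisely to spectrally one-sided $\eta$ and to subordinators, where only one exponential moment is available.

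The paper's actual argument is essentially the ``alternative'' you sketch at the end, but in its correctly one-sided form: the first claim follows from the boundary-uniqueness result just mentioned; for the second, from $\phi_Y=\phi_X$ near $0$ and $\phi_X(u)=\lim_{v\uparrow 0}g(u+iv)$ one deduces via Luk\'acs (Thm.~11.1.1) that $Y$ inherits the \emph{same one-sided} moment $E e^{\varepsilon Y}<\infty$ (not $E e^{\eta|Y|}<\infty$), and then a uniqueness theorem for boundary values of such analytic extensions (Kawata, Thm.~9.6.2) gives $\law(Y)=\law(X)$. To repair your write-up you must either strengthen the hypothesis to a two-sided exponential moment (which would make the lemma unusable where the paper needs it) or replace the identity theorem and the moment argument by the appropriate boundary-uniqueness statements.
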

\begin{proof}
Without loss of generality assume that $E e^{-\varepsilon X} <
\infty$. Then $g(z) := E e^{i z X}$ can be defined for all $z\in
\bC$ such that $0 \leq \Im z < \varepsilon$, it is continuous there
and analytic in $0 < \Im z < \varepsilon$. That $\phi_X$ cannot be
identically zero on non-empty open intervals then follows from
\cite[Cor. 1.14.1]{kawata1972}. Let $Y$ be another random variable
such that $\phi_Y(u) = \phi_X(u)$ for all $u \in (-a,a)$ with some
$a>0$. Since $\phi_Y(u) = \lim_{y\downarrow 0} g(u + iy)$ for $u\in
(-a,a)$, it follows from \cite[Thm. 11.1.1]{lucasz1970} and its
proof that $ E e^{- \varepsilon Y} < \infty$. That $\law(Y) =
\law(X)$ then follows from \cite[Thm. 9.6.2]{kawata1972}.
\end{proof}

Define $ID^{\rm sym}$ to be the set of all infinitely divisible
distributions $\law(\eta_1)$  which are symmetric, and $ID^{\rm
exp}$ to be the set of all infinitely divisible distributions whose
L\'evy measure $\nu_\eta$ has some one-sided exponential moment,
i.e. for which there is $\varepsilon > 0$ such that
$$\int_{1}^\infty e^{\varepsilon x} \, \nu_\eta(dx) < \infty \quad
\mbox{or} \quad \int_{-\infty}^{-1} e^{- \varepsilon x} \,
\nu_\eta(dx) < \infty.$$
Denote
$$D_\xi^{\rm sym, exp} := D_\xi \cap (ID^{\rm sym} \cup ID^{\rm
exp}), \quad D_\xi^{\rm sym} := D_\xi \cap ID^{\rm sym} \quad \mbox{and} \quad D_\xi^{\rm exp} := D_\xi \cap ID^{\rm
exp}.$$
With these notions, we get the following result:

\begin{theorem} \label{cor-phi-xi-injective}
Let $(\xi_t)_{t\geq 0}$ be a L\'evy process such that $\xi_t$
converges almost surely to $\infty$ as $t\to \infty$. Then
$(\Phi_\xi)|_{D_{\xi}^{\rm sym, exp}}$ is injective and $$\Phi_\xi
(D_\xi^{\rm sym, exp}) \cap \Phi_\xi ( D_\xi \setminus D_\xi^{\rm
sym, exp}) = \emptyset.$$ If additionally $\xi$ is spectrally
negative, or $\xi = q N$ for some constant $q>0$ and a Poisson
process $N$, then $\Phi_\xi$ is injective on $D_\xi$.
\end{theorem}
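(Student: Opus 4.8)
The plan is to reduce everything, via Theorem~\ref{thm-injectivity}, to the statement that for the relevant classes of $\law(\eta_1)$ either $\phi_{V_\infty}$ is non-zero on a dense subset of $\bR$ or $\law(\eta_1)$ is already pinned down by the values of its characteristic function near $0$. For $\law(\eta_1)\in D_\xi^{\rm sym}$ I would condition on $\xi$: given a path of $\xi$, the variable $V_\infty=\int_0^\infty e^{-\xi_{s-}}\,d\eta_s$ is an almost sure limit of stochastic integrals of deterministic c\`adl\`ag integrands against the L\'evy process $\eta$, so its conditional law $\mu_\xi$ is infinitely divisible, and it is symmetric because $-\eta\eqdr\eta$; a symmetric infinitely divisible characteristic function is $(0,1]$-valued, hence $\widehat{\mu_\xi}(u)>0$ for all $u$ and almost every path of $\xi$, and integrating gives $\phi_{V_\infty}(u)=E[\widehat{\mu_\xi}(u)]>0$ for every $u\in\bR$. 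For $\law(\eta_1)\in D_\xi^{\rm exp}$ a one-sided exponential moment of $\nu_\eta$ gives $Ee^{\varepsilon\eta_1}<\infty$ or $Ee^{-\varepsilon\eta_1}<\infty$ for some $\varepsilon>0$ (exponential-moment criterion for infinitely divisible laws, \cite[Thm.~25.3]{sato}), so Lemma~\ref{lem-zeros} applies to $\eta_1$. In either case Theorem~\ref{thm-injectivity} yields that $\law(\eta_1)$ is determined by $\law(V_\infty)$ and $\law(\xi_1)$, which is the asserted injectivity of $(\Phi_\xi)|_{D_\xi^{\rm sym,exp}}$.

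For the disjointness $\Phi_\xi(D_\xi^{\rm sym,exp})\cap\Phi_\xi(D_\xi\setminus D_\xi^{\rm sym,exp})=\emptyset$ I would argue by contradiction: if $\law(\eta_1)\in D_\xi^{\rm sym,exp}$ and $\law(\eta_1')\in D_\xi\setminus D_\xi^{\rm sym,exp}$ produced the same law $\mu=\law(V_\infty)=\law(V_\infty')$, then, the right-hand side of \eqref{eq-relation5} depending only on $\law(\xi_1)$ and $\mu$, one gets $\psi_\eta(u)\phi_\mu(u)=\psi_{\eta'}(u)\phi_\mu(u)$ for all $u$, hence $\phi_{\eta_1}=\phi_{\eta_1'}$ on a neighbourhood of $0$ (where $\phi_\mu\neq0$). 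In the symmetric case $\phi_\mu$ is everywhere non-zero by the first paragraph, so in fact $\psi_\eta=\psi_{\eta'}$ on all of $\bR$; in the exponential case Lemma~\ref{lem-zeros} upgrades the local coincidence to $\law(\eta_1')=\law(\eta_1)$. Either way $\law(\eta_1')\in D_\xi^{\rm sym,exp}$, a contradiction.

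For the two additional statements the plan is to show that $V_\infty$ itself is infinitely divisible, whence $\phi_{V_\infty}$ never vanishes and Theorem~\ref{thm-injectivity} applies to all of $D_\xi$. If $\xi=qN$ with $N$ a Poisson process of rate $\lambda$, then unrolling the fixed point equation of Example~\ref{example-start} (in which now $X=e^{-q}$ is deterministic) gives $V_\infty\eqdr\sum_{k\ge0}e^{-kq}H_k$ with $(H_k)$ i.i.d.\ copies of $H\eqdr\eta_\tau$, $\tau\sim\mathrm{Exp}(\lambda)$ independent of $\eta$; since $\mathrm{Exp}(\lambda)$ is an $n$-fold convolution of $\mathrm{Gamma}(1/n,\lambda)$ laws and $\eta$ has independent increments, $H$ is infinitely divisible, hence so is every partial sum $\sum_{k=0}^n e^{-kq}H_k$ and so is its almost sure limit $V_\infty$. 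If $\xi$ is spectrally negative (and, as throughout, $\xi_t\to+\infty$), let $\tau_x=\inf\{t\ge0:\xi_t\ge x\}$; then $\tau_0=0$, $\tau_x<\infty$, $x\mapsto\tau_x$ is right-continuous with $\tau_x\to\infty$, and, because $\xi$ has no positive jumps, it creeps upward, i.e.\ $\xi_{\tau_x}=x$ for all $x$. Setting $A_x:=\int_{(0,\tau_x]}e^{-\xi_{s-}}\,d\eta_s$, the strong Markov property of $(\xi,\eta)$ at $\tau_x$ together with $\xi_{\tau_x}=x$ shows that $A_{x+y}-A_x=e^{-x}\tilde A_y$, where $\tilde A_y$ is distributed as $A_y$ and independent of $\cF_{\tau_x}$; hence $(A_x)_{x\ge0}$ has independent increments, and stochastic continuity and c\`adl\`ag regularity follow from the corresponding properties of $x\mapsto\tau_x$ and from $\tau_h\to0$ a.s.\ as $h\downarrow0$. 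Thus $(A_x)_{x\ge0}$ is an additive process, so each $A_x$ is infinitely divisible; since $A_x\to V_\infty$ almost surely as $x\to\infty$ and infinite divisibility is preserved under weak limits, $V_\infty$ is infinitely divisible.

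The routine ingredients are the manipulations of \eqref{eq-relation5}, the conditioning arguments, and the $\mathrm{Gamma}(1/n,\lambda)$-decomposition. The main obstacle is the spectrally negative case: one must use the creeping property $\xi_{\tau_x}=x$ — a fluctuation-theoretic fact, valid precisely because $\xi$ has no positive jumps — to ensure that the scaling factor produced at $\tau_x$ is the deterministic $e^{-x}$ and not a genuinely random $e^{-\xi_{\tau_x}}$ (which would destroy independence of the increments of $(A_x)$), and then to verify carefully the stochastic continuity and path regularity of $(A_x)_{x\ge0}$ so that the classical identification of the marginals of an additive process as infinitely divisible may be invoked.
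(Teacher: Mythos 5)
Your proposal is correct, and three of its four ingredients coincide with the paper's argument: the symmetric case by conditioning on $\xi$ and using that a symmetric infinitely divisible conditional law has a strictly positive characteristic function, the exponential case via the one-sided exponential moment of $\eta_1$ and Lemma~\ref{lem-zeros}, and the case $\xi=qN$ by unrolling the compound-Poisson fixed point equation of Example~\ref{example-start} (the paper gets infinite divisibility of $\eta_\tau$ by citing \cite[Thm.~30.1]{sato} rather than your explicit $\mathrm{Gamma}(1/n,\lambda)$ decomposition, but these are the same fact). The one place where you genuinely diverge is the spectrally negative case: the paper simply invokes Remark~(ii) to Theorem~2.2 of \cite{bertoinlindnermaller08}, which states that $V_\infty$ is self-decomposable, whereas you reprove infinite divisibility from scratch via the first-passage times $\tau_x$, the creeping property $\xi_{\tau_x}=x$, and the identification of $(A_x)_{x\geq 0}$ as an additive process. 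Your route is self-contained and correct (the points needing care --- $\tau_h\downarrow 0$ a.s., stochastic continuity, and the triangular-array argument for marginals of additive processes --- all go through), but it is slightly roundabout: the very decomposition you set up, $V_\infty = A_x + e^{-x}\tilde V_\infty$ with $\tilde V_\infty \eqdr V_\infty$ independent of $A_x$, \emph{is} the self-decomposability relation for every $c=e^{-x}\in(0,1)$, so you could skip the additive-process machinery entirely and conclude $\phi_{V_\infty}\neq 0$ directly, which is essentially how the cited result is proved. Your explicit contradiction argument for the disjointness statement via \eqref{eq-relation5} is just an unfolding of what Theorem~\ref{thm-injectivity} already delivers, and is fine.
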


In the special case when $\xi_t = t$, we have a spectrally negative
$\xi$, and we recover the well known result (e.g. \cite[Prop.
3.6.10]{jurekmason}) that $\Phi_{\xi_t = t}$ is injective.

\begin{proof}[Proof of Theorem \ref{cor-phi-xi-injective}]
If $\xi$ is spectrally negative, then $V_\infty=\int_0^\infty
e^{-\xi_{t-}} \, d\eta_t$ is self-decomposable by Remark (ii) to
Theorem 2.2 in \cite{bertoinlindnermaller08}, hence infinitely
divisible so that $\phi_{V_\infty}(u) \neq 0$ for all $u\in \bR$.
Injectivity of $\Phi_\xi$ then follows from Theorem
\ref{thm-injectivity}. If $\xi = q N_t$ for $q>0$ and a Poisson
process $N$, then by Example \ref{example-start} we can write
$V_\infty = \sum_{i=0}^\infty e^{-qi} (\eta_{T_{i+1}}-\eta_{T_i}),$
where $(\eta_{T_{i+1}} - \eta_{T_i})_{i=0,1,2,\ldots}$ is i.i.d. and
infinitely divisible by \cite[Thm. 30.1]{sato}. Hence $V_\infty$ is
infinitely divisible, and injectivity of $\Phi_\xi$ follows from
Theorem \ref{thm-injectivity}.

Now let $\xi$ be an arbitrary L\'evy process drifting to infinity.
If $\law(\eta_1) \in D_\xi \cap ID^{\rm exp}$, then there is
$\varepsilon > 0$ such that $E e^{\varepsilon \eta_1} < \infty$ or
$E e^{-\varepsilon \eta_1} < \infty$ (cf. \cite[Thm. 25.17]{sato}),
and Theorem~\ref{thm-injectivity} and Lemma \ref{lem-zeros} show
that $(\Phi_\xi)|_{D_\xi^{\rm exp}}$ is injective and
$\Phi_\xi (D_\xi^{\rm exp}) \cap \Phi_\xi (D_\xi \setminus
D_\xi^{\rm exp}) = \emptyset$.

Finally, let $\xi$ be an arbitrary L\'evy process drifting to
infinity and $\law(\eta_1) \in D_\xi^{\rm sym}$. Conditioning on
$\xi$, for $f$ in the Skorokhod space $D([0,\infty), \bR)$ of
c\`adl\`ag functions, we have
$$\left( V_\infty | \xi = f \right) =
\int_0^\infty e^{-f(t)} \, d\eta_t,$$ which converges for
$P_\xi$-almost every $f$. For such $f$, $\int_0^\infty e^{-f(t)} \,
d\eta_t$ is infinitely divisible (e.g. Sato~\cite{sato2007}), and
hence $E (e^{iuV_\infty}|\xi = f) \neq 0$ for all $u\in \bR$. Since
$\int_0^\infty e^{-f(t)} \, d\eta_t$ is also symmetric, $E
(e^{iuV_\infty}|\xi = f)$ is real valued and continuous in $u$ and
hence strictly positive for all $u\in \bR$. It follows that
$$\phi_{V_\infty} (u) =
\int_{D([0,\infty),\bR)} E \left[ e^{iu V_\infty} |\xi= f \right] \,
P_\xi (df) > 0  \quad \forall u \in \bR.$$ Theorem
\ref{thm-injectivity} then shows that $(\Phi_\xi)|_{D_\xi^{\rm sym}}$ is injective and $\Phi_\xi (D_\xi^{\rm sym})
\cap \Phi_\xi (D_\xi \setminus D_\xi^{\rm sym}) =
\emptyset$. This finishes the proof.
\end{proof}

\begin{remark}
Theorem \ref{cor-phi-xi-injective} shows in particular that if $\xi$
is arbitrary (but drifting to $+\infty$), and $\eta$ is spectrally
positive or negative (which applies in particular if $\eta$ is a
subordinator or the negative of a subordinator), then the
distribution of $\eta_1$ is uniquely determined by $\law(V_\infty)$
and $\law(\xi_1)$.
\end{remark}

Let us now turn to injectivity properties of $\tilde{\Phi}_\eta$. We
start with the following lemma, which is immediate from Lemma
\ref{lem-zeros}.

\begin{lemma} \label{lem-zeros2}
Let $X$ be a random variable which has no atom at 0 and assume that
there is $\varepsilon > 0$ such that $E |X|^\varepsilon < \infty$ or
$E |X|^{-\varepsilon} < \infty$. Then the characteristic function
$\phi_{\log |X|}$ of $\log |X|$ cannot be identically zero on
non-empty open intervals.
\end{lemma}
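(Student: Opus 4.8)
The plan is to reduce the statement immediately to Lemma \ref{lem-zeros} by passing to the random variable $Y := \log |X|$. The first step is to observe that this substitution makes sense: since $X$ has no atom at $0$, we have $|X| > 0$ almost surely, so $Y = \log|X|$ is a well-defined real-valued random variable (finite a.s.), and by definition its characteristic function is exactly $\phi_{\log|X|}$.

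The second step is to translate the moment hypothesis. One has $E e^{\varepsilon Y} = E e^{\varepsilon \log |X|} = E |X|^\varepsilon$ and likewise $E e^{-\varepsilon Y} = E |X|^{-\varepsilon}$, so the assumption that at least one of $E|X|^\varepsilon$, $E|X|^{-\varepsilon}$ is finite is precisely the hypothesis of Lemma \ref{lem-zeros} applied to $Y$, with the very same $\varepsilon$. Invoking Lemma \ref{lem-zeros} for $Y$ then gives that $\phi_Y = \phi_{\log|X|}$ cannot be identically zero on any non-empty open interval, which is the assertion.

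There is essentially no obstacle; the only point that deserves a word of care is the well-definedness of $\log|X|$, which is exactly the role played by the hypothesis that $X$ has no atom at $0$. (If desired, the uniqueness-type ``furthermore'' conclusion of Lemma \ref{lem-zeros} transfers verbatim to $Y$ as well, but it is not part of the present statement and so may be omitted.)
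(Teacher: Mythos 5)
Your argument is exactly the intended one: the paper states Lemma \ref{lem-zeros2} as an immediate consequence of Lemma \ref{lem-zeros}, obtained precisely by applying that lemma to $Y=\log|X|$ and noting $Ee^{\pm\varepsilon Y}=E|X|^{\pm\varepsilon}$, with the no-atom-at-zero hypothesis guaranteeing that $Y$ is a.s.\ finite. The proposal is correct and matches the paper's reasoning.
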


Examples of random variables $X$ with  finite negative fractional
moment $E |X|^{-\varepsilon}<\infty$ are given by random variables
which have a density $f$ in a neighborhood of zero such that $f(x) =
O(x^{\alpha})$ as $|x|\to 0$ for some $\alpha > \varepsilon -1 $. In
particular, if $\law(X)$ is a self-decomposable non-degenerate
distribution, then $X$ has a density satisfying this condition for
some $\varepsilon > 0$, which follows from Theorems 28.4, 53.6 and
53.8 in \cite{sato}; observe that this is trivial if $X$ has a
non-zero Gaussian component.
 Hence, whenever $X\not\equiv 0$ is
self-decomposable, then $\phi_{\log |X|}$ cannot be identically zero
on non-empty open intervals.

Other examples are given in the next lemma, which shows that
$\int_0^\infty e^{-\xi_{s-}} \, d\eta_s$ will always have certain
negative fractional moments if $\eta$ is a subordinator with
strictly positive drift, or if $\eta$ has a non-trivial Brownian
motion component. This complements \cite[Lem. 2.1]{MaulikZwart} and
\cite[Lem. 3.3]{savovetal} who assume $\xi$ to have finite mean.

\begin{lemma} \label{lem-fractional-moment}
Let $\xi$ and $\eta$ be two independent L\'evy processes such that
$V_\infty = \int_0^\infty e^{-\xi_{s-}} \, d\eta_s$ converges almost
surely. Suppose that $\eta$ is a subordinator with strictly positive
drift, or that the Brownian motion part of $\eta$ is non-trivial
(i.e. $\sigma_\eta^2 > 0$). Then $E |V_\infty|^{-\varepsilon} <
\infty$ for every $\varepsilon \in [0,1)$. In the latter case (i.e.
when $\sigma_\eta^2 > 0)$, $V_\infty$ has a  bounded density on
$\bR$.
\end{lemma}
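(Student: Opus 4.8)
The plan is to condition on the path of $\xi$ and reduce everything to a statement about the exponential integral $\int_0^\infty e^{-f(t)}\,d\eta_t$ for a fixed deterministic càdlàg function $f$, which for almost every realisation $f$ of $\xi$ is a convergent integral whose law is infinitely divisible (by Sato \cite{sato2007}). Writing $V_\infty(f) := \int_0^\infty e^{-f(t)}\,d\eta_t$, we have $\phi_{V_\infty}(u) = \int E[e^{iuV_\infty}\mid \xi = f]\,P_\xi(df)$ and, more importantly, $E|V_\infty|^{-\varepsilon} = \int E[|V_\infty(f)|^{-\varepsilon}\mid\xi=f]\,P_\xi(df)$, so it suffices to bound $E|V_\infty(f)|^{-\varepsilon}$ uniformly (or at least by an integrable function of $f$) over $P_\xi$-almost all $f$. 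Since $e^{-f}$ is bounded on compacts and bounded below by a positive constant on $[0,T]$ for each fixed $T$, I would first truncate: write $V_\infty(f) = \int_0^T e^{-f(t)}\,d\eta_t + R_T(f)$, where the first term is a genuine (non-degenerate) infinitely divisible random variable and $R_T(f)$ is independent of it (using independence of increments of $\eta$). Conditioning further on $R_T(f)$, it then suffices to show that $W_T(f) := \int_0^T e^{-f(t)}\,d\eta_t$ has, uniformly over $f$ in a set of full $P_\xi$-measure, a density bounded on compacts, or satisfies $E|W_T(f)+c|^{-\varepsilon} \le C_\varepsilon$ for all $c\in\bR$ with a constant not depending on $c$; the shift $c$ is harmless because $|c| \mapsto \int |x+c|^{-\varepsilon}\rho(dx)$ is maximised, up to a universal constant, near $c=0$ when $\rho$ has a bounded density.

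Concretely, in the Gaussian case $\sigma_\eta^2 > 0$, the integral $\int_0^T e^{-f(t)}\,dB_t$ (the Brownian part of $\eta$) is centred Gaussian with variance $\sigma_\eta^2\int_0^T e^{-2f(t)}\,dt \ge \sigma_\eta^2 e^{-2\sup_{[0,T]}f}\,T > 0$, hence has a density bounded by $(2\pi\sigma_\eta^2 e^{-2\sup_{[0,T]}f}T)^{-1/2}$, and the remaining (pure-jump plus drift) part of $\eta$ on $[0,T]$ together with $R_T(f)$ is independent of it; convolving a bounded-density random variable with anything yields a bounded density, with the same bound. Thus $V_\infty(f)$ itself has, conditionally on $\xi=f$, a density bounded by $K(f) := (2\pi\sigma_\eta^2 e^{-2\sup_{[0,T]}f}T)^{-1/2}$, and then $E[|V_\infty(f)|^{-\varepsilon}\mid\xi=f] \le K(f)\int_{|x|\le 1}|x|^{-\varepsilon}\,dx + 1 < \infty$ for $\varepsilon\in[0,1)$. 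Since $\sup_{[0,T]}|\xi_t| < \infty$ a.s., $K(\xi)$ is a.s.\ finite, and a monotone/dominated convergence argument (first proving the bound is integrable over $P_\xi$, possibly after a further localisation on $\{\sup_{[0,T]}|\xi_t|\le m\}$ and a Borel--Cantelli-type argument, or simply by noting the whole computation can also be run with the unconditional Brownian variance $\sigma_\eta^2\int_0^T e^{-2\xi_t}\,dt$, which is a.s.\ positive) yields $E|V_\infty|^{-\varepsilon} < \infty$ and the unconditional boundedness of the density of $V_\infty$ on $\bR$.

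For the subordinator-with-drift case, $\eta_t = b_\eta t + J_t$ with drift $b_\eta > 0$ and $J$ a subordinator (no drift); then $\int_0^T e^{-f(t)}\,d\eta_t \ge b_\eta\int_0^T e^{-f(t)}\,dt \ge b_\eta e^{-\sup_{[0,T]}f}\,T =: \delta(f) > 0$, and in fact $V_\infty(f) \ge \delta(f)$ deterministically (the jump part only adds a nonnegative amount), so $E[|V_\infty(f)|^{-\varepsilon}\mid\xi=f] \le \delta(f)^{-\varepsilon} < \infty$; integrating over $P_\xi$ and using $\sup_{[0,T]}\xi < \infty$ a.s.\ (choose $T=1$, say) gives $E|V_\infty|^{-\varepsilon} \le b_\eta^{-\varepsilon}\,E[e^{\varepsilon\sup_{[0,1]}\xi_t}]$, which is finite because $\sup_{[0,1]}\xi$ has exponential moments of all orders (it is dominated by the supremum of a Lévy process over a bounded interval, whose exponential moments exist — here one needs $\varepsilon < 1$ only to make $|x|^{-\varepsilon}$ integrable, the finiteness of the exponential moment of $\sup_{[0,1]}\xi$ follows from standard Lévy theory, e.g.\ \cite[Thm.~25.18]{sato} applied to the running supremum). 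The main obstacle I anticipate is the interchange of conditional expectation with the $P_\xi$-integration in the Gaussian case, i.e.\ showing the conditional bound $K(\xi)$ is $P_\xi$-integrable; this is where I would be most careful, handling it by localising on $\{\sup_{[0,T]}|\xi_t|\le m\}$, or, cleaner, by working directly with the unconditional Gaussian variance $\sigma_\eta^2\int_0^T e^{-2\xi_s}\,ds$ of the Brownian part of $V_\infty$ and invoking that a random variable which is a mixture of Gaussians with a.s.\ strictly positive (possibly random) variance, convolved with an independent random variable, still has a density that one can control enough to deduce $E|V_\infty|^{-\varepsilon}<\infty$ — and the boundedness of the density on $\bR$ follows because $V_\infty \stackrel{d}{=} \sigma_\eta\int_0^\infty e^{-\xi_{s-}}\,dB_s + (\text{independent remainder})$ where, conditionally on $\xi$, the first summand is Gaussian with variance $\ge \sigma_\eta^2\int_0^1 e^{-2\xi_s}\,ds$, so the unconditional density is bounded by $E[(2\pi\sigma_\eta^2\int_0^1 e^{-2\xi_s}\,ds)^{-1/2}]$, which is finite by the same exponential-moment argument.
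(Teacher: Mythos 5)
Your overall architecture is the same as the paper's (condition on $\xi$, use the drift to bound $V_\infty$ from below in the subordinator case, use the Gaussian density bound and convolution in the case $\sigma_\eta^2>0$), but there is a genuine gap at the one step that actually requires care: you assert that $E\bigl[e^{\varepsilon \sup_{0\leq s\leq 1}\xi_s}\bigr]<\infty$ ``from standard L\'evy theory''. This is false in general. By \cite[Thms.~25.17, 25.18]{sato}, the exponential moment of $\sup_{0\leq s\leq 1}|\xi_s|$ is finite if and only if $\int_{|x|>1}e^{\varepsilon |x|}\,\nu_\xi(dx)<\infty$, and nothing in the hypotheses of the lemma forces this: the lemma only assumes that $V_\infty$ converges a.s., which (by Erickson--Maller) requires $\xi_t\to+\infty$ and a log-moment condition on $\eta$, but places no exponential-moment restriction on the positive jumps of $\xi$. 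Concretely, take $\xi_t=t+S_t$ with $S$ a compound Poisson subordinator with L\'evy measure $x^{-2}\mathds{1}_{x>1}\,dx$ and $\eta_t=t$; then all hypotheses hold, yet $E[e^{\varepsilon\sup_{[0,1]}\xi_s}]=\infty$ for every $\varepsilon>0$, so your bound $E|V_\infty|^{-\varepsilon}\leq b_\eta^{-\varepsilon}E[e^{\varepsilon\sup_{[0,1]}\xi_s}]$ is vacuous. The same defect propagates into your Gaussian case, since your bound on the density is $E[(2\pi\sigma_\eta^2\int_0^1 e^{-2\xi_s}\,ds)^{-1/2}]$, whose finiteness you again justify by ``the same exponential-moment argument''.

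The missing idea --- and the one the paper uses --- is to decouple the large jumps of $\xi$ from its small-jump part. Write $\xi^\flat_t=\xi_t-\sum_{0<s\leq t,\,|\Delta\xi_s|>1}\Delta\xi_s$ and let $\tau$ be the time of the first jump of $\xi$ of magnitude exceeding $1$. Then $\tau$ is exponentially distributed (or $\equiv\infty$), independent of $\xi^\flat$, and on $[0,1\wedge\tau)$ one has $\xi=\xi^\flat$, whence
$$V_\infty\;\geq\;\gamma_\eta^0\int_0^{1\wedge\tau}e^{-\xi_{s-}}\,ds\;\geq\;\gamma_\eta^0\,(1\wedge\tau)\exp\Bigl(-\sup_{0\leq s\leq 1}|\xi_s^\flat|\Bigr).$$
Now $\xi^\flat$ has bounded jumps, so $E\exp\bigl(\varepsilon\sup_{0\leq s\leq 1}|\xi_s^\flat|\bigr)<\infty$ for every $\varepsilon$ by \cite[Thms.~25.17, 25.18]{sato}, and $E(1\wedge\tau)^{-\varepsilon}<\infty$ because $\varepsilon<1$ and the density of $\tau$ is bounded near $0$; independence of the two factors then gives $E V_\infty^{-\varepsilon}<\infty$. (This is also where the restriction $\varepsilon<1$ genuinely enters --- through the negative moment of $\tau$ --- not merely through the integrability of $|x|^{-\varepsilon}$ near $0$ as you suggest.) With the subordinator case established this way, the Gaussian case goes through essentially as you wrote it: the conditional density of $V_\infty$ given $\xi=f$ is bounded by $(2\pi\sigma_\eta^2\int_0^\infty e^{-2f(s)}\,ds)^{-1/2}$, and integrating over $P_\xi$ gives the bound $(2\pi\sigma_\eta^2)^{-1/2}E[(\int_0^\infty e^{-2\xi_{s-}}\,ds)^{-1/2}]$, which is finite by applying the already-proved first part to the exponential functional $\int_0^\infty e^{-2\xi_{s-}}\,ds$ (a subordinator integrand with drift $1$ and $\varepsilon=1/2<1$), rather than by any exponential moment of $\sup\xi$.
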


\begin{proof}
Suppose first that $\eta$ is a subordinator with strictly positive
drift $\gamma_\eta^0$. Let $\varepsilon \in (0,1)$. Define the
L\'evy process $\xi^\flat$ by $\xi_t^\flat = \xi_t - \sum_{0<s\leq
t, |\Delta \xi_s| > 1} \Delta \xi_s$. Let $\tau$ be the time of the
first jump of $\xi$ whose size is greater than $1$ in magnitude.
Then
$$V_\infty \geq \gamma_\eta^0 \int_0^{1\wedge \tau} e^{-\xi_{s-}} \,
ds \geq \gamma_\eta^0 (1\wedge \tau) \exp \left( - \sup_{0\leq s
\leq 1} |\xi_s^\flat|\right).$$ Since $\tau$ and $\xi^\flat$ are
independent and $\tau$ is exponentially distributed (or $\tau \equiv
\infty$), it follows $E (1\wedge \tau)^{-\varepsilon} < \infty$ and
$E \exp \left(\varepsilon \sup_{0\leq s \leq 1}
|\xi_s^\flat|\right) < \infty$ (cf. \cite[Thms. 25.17,
25.18]{sato}), so that $E V_\infty^{-\varepsilon} < \infty$ if
$\eta$ is a subordinator with strictly positive drift.

Now suppose that $\eta$ is a L\'evy process such that $\sigma_\eta^2
> 0$. Denote the Brownian motion component of $\eta$ by $B$, so that
$B$ and $\eta-B$ are independent. Then the conditional distribution
of $V_\infty$ given $\xi=f$ is given by $\int_0^\infty e^{-f(t-)} \,
dB_t + \int_0^\infty e^{-f(t-)} \, d(\eta_t - B_t)$. But
$\int_0^\infty e^{-f(t-)} \, dB_t$ is $N(0, \sigma_\eta^2
\int_0^\infty e^{-2f(s)} \, ds)$-distributed, hence its density is
bounded by $(2\pi \sigma_\eta^2\int_0^\infty e^{-2f(s)}\,
ds)^{-1/2}$. Hence also $(V_\infty|\xi = f)$ has a density, $g_f$
say, which is bounded by $(2\pi \sigma_\eta^2\int_0^\infty
e^{-2f(s)}\, ds)^{-1/2}$. It follows that $V_\infty$ has a density
given by $x\mapsto \int_{D([0,\infty))} g_f(x) \, P_\xi(df)$, and
since
$$\int_{D([0,\infty))} \left(2\pi \sigma_\eta^2 \int_0^\infty e^{-2f(s)} ds \right)^{-1/2} \,
P_\xi(df) = (2\pi\sigma_\eta^2)^{-1/2} E \left[ \left( \int_0^\infty
e^{-2\xi_{s-}} \, ds\right)^{-1/2}\right] < \infty$$ by the part
just proved, this density is bounded on $\bR$. This then also shows
that $E |V_\infty|^{-\varepsilon} < \infty$ for all $\varepsilon \in
[0,1)$.
\end{proof}

Recall that $ID^{\rm exp}$ denotes the set of all infinitely
divisible distributions whose L\'evy measure has some one-sided
exponential moment. Denote
$$\tilde{D}_\eta^{\rm exp} := \tilde{D}_\eta \cap ID^{\rm \exp}.$$
We can now prove the following injectivity result  regarding
$\tilde{\Phi}_\eta$:

\begin{theorem} \label{cor-eta-injective1}
Let $\eta=(\eta_t)_{t\geq 0}$ be a non-zero L\'evy process.  Then
$(\tilde{\Phi}_\eta)|_{\tilde{D}_\eta^{\rm exp}}$ is injective and
\begin{equation} \label{eq-injectivity1}
\tilde{\Phi}_\eta (\tilde{D}_\eta^{\rm exp}) \cap \tilde{\Phi}_\eta
(\tilde{D}_\eta \setminus \tilde{D}_\eta^{\rm exp}) = \emptyset.
\end{equation} If additionally $\eta$ is a subordinator with strictly positive
drift, or if the Brownian motion part of $\eta$ is non-trivial (i.e.
$\sigma_\eta^2 > 0$), or if $\eta$ is a compound Poisson process
without drift such that $\nu_\eta((-\infty,0)) = 0$ and $\int_0^1
x^{-\varepsilon} \, \nu_\eta(dx) < \infty$ for some $\varepsilon >
0$, then $\tilde{\Phi}_\eta$ is injective on $\tilde{D}_\eta$.
\end{theorem}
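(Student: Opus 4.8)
The plan is to reduce the statement of Theorem~\ref{cor-eta-injective1} to an application of Theorem~\ref{thm-injectivity}, which says that $\law(\xi_1)$ is uniquely determined by $\law(V_\infty)$ and $\law(\eta_1)$ provided either $\phi_{\log|V_\infty|}$ is non-zero on a dense subset of $\bR$, or $\law(\xi_1)$ is determined by the values of its characteristic function near the origin. For the first assertion I would use the second horn: if $\law(\xi_1)\in\tilde{D}_\eta\cap ID^{\rm exp}$, then by \cite[Thm.~25.17]{sato} there is $\varepsilon>0$ with $E e^{\varepsilon\xi_1}<\infty$ or $E e^{-\varepsilon\xi_1}<\infty$, and then Lemma~\ref{lem-zeros} (applied to $X=\xi_1$) tells us $\law(\xi_1)$ is determined by $\phi_{\xi_1}$ in a neighbourhood of $0$. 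Hence Theorem~\ref{thm-injectivity} applies and gives injectivity of $(\tilde{\Phi}_\eta)|_{\tilde{D}_\eta^{\rm exp}}$. To get the disjointness statement \eqref{eq-injectivity1}, note that if $\law(\xi_1)\in\tilde{D}_\eta^{\rm exp}$ and $\law(\tilde\xi_1)\in\tilde{D}_\eta\setminus\tilde{D}_\eta^{\rm exp}$ produced the same $\law(V_\infty)$, then running the uniqueness argument with the exponential-moment process $\xi$ would force $\law(\tilde\xi_1)=\law(\xi_1)$, contradicting that one lies in $ID^{\rm exp}$ and the other does not; so the images cannot intersect.

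For the three additional sufficient conditions I would instead verify the first horn of Theorem~\ref{thm-injectivity}, namely that $\phi_{\log|V_\infty|}(u)\neq 0$ on a dense subset of $\bR$, using Lemma~\ref{lem-zeros2}: it suffices to check that $V_\infty$ has no atom at $0$ and that $E|V_\infty|^{\varepsilon}<\infty$ or $E|V_\infty|^{-\varepsilon}<\infty$ for some $\varepsilon>0$. Absence of an atom at $0$ is clear from \cite[Thm.~2.2]{bertoinlindnermaller08} whenever $\eta$ is not the zero process (recall $\xi$ drifts to $+\infty$ by \cite[Thm.~2]{ericksonmaller05}). In the subordinator-with-positive-drift case and in the $\sigma_\eta^2>0$ case, Lemma~\ref{lem-fractional-moment} directly supplies $E|V_\infty|^{-\varepsilon}<\infty$ for $\varepsilon\in[0,1)$, so Lemma~\ref{lem-zeros2} applies and $\phi_{\log|V_\infty|}$ cannot vanish on any non-empty open interval; in particular it is non-zero on a dense set, and Theorem~\ref{thm-injectivity} yields injectivity on all of $\tilde{D}_\eta$.

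The remaining case, where $\eta$ is a driftless compound Poisson process with $\nu_\eta((-\infty,0))=0$ and $\int_0^1 x^{-\varepsilon}\,\nu_\eta(dx)<\infty$ for some $\varepsilon>0$, is the one requiring a genuine (though short) argument, and I expect it to be the main obstacle since it is not covered by Lemma~\ref{lem-fractional-moment}. Here $\eta\ge 0$ is a pure-jump subordinator, so $V_\infty=\int_0^\infty e^{-\xi_{s-}}\,d\eta_s\ge 0$. I would condition on $\xi=f$ and on the jump structure of $\eta$: writing the first jump of $\eta$ as occurring at an exponential time $T_1$ with (independent) jump size $J_1\sim\nu_\eta/\nu_\eta(\bR)$, one has $V_\infty\ge e^{-\xi_{T_1-}}J_1\ge J_1\exp(-\sup_{0\le s\le T_1}\xi_s^{\flat})\cdots$ — more carefully, split $\xi$ into its part with jumps bounded by $1$ and the (finitely many before any fixed time) large jumps, exactly as in the proof of Lemma~\ref{lem-fractional-moment}, to get $V_\infty\ge J_1\,(1\wedge\tau')\,\exp(-\sup_{0\le s\le 1}|\xi_s^{\flat}|)$ on a suitable event. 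Then $E|V_\infty|^{-\varepsilon'}<\infty$ for small $\varepsilon'$ will follow from $E J_1^{-\varepsilon'}<\infty$ (which is the hypothesis $\int_0^1 x^{-\varepsilon}\,\nu_\eta(dx)<\infty$, after normalising and noting large jumps only help), together with the exponential-moment bound $E\exp(\varepsilon'\sup_{0\le s\le 1}|\xi_s^{\flat}|)<\infty$ from \cite[Thms.~25.17, 25.18]{sato} and $E(1\wedge\tau')^{-\varepsilon'}<\infty$. With $V_\infty$ having no atom at $0$ and a finite negative fractional moment, Lemma~\ref{lem-zeros2} again gives that $\phi_{\log|V_\infty|}$ does not vanish on non-empty open intervals, and Theorem~\ref{thm-injectivity} finishes the proof.
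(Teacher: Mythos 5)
Your treatment of the first two assertions coincides with the paper's proof: the statement about $\tilde{D}_\eta^{\rm exp}$, including the disjointness \eqref{eq-injectivity1}, follows from Lemma~\ref{lem-zeros} applied to $\xi_1$ (via \cite[Thm.~25.17]{sato}) together with Theorem~\ref{thm-injectivity}, and the subordinator-with-positive-drift and $\sigma_\eta^2>0$ cases follow from Lemmas~\ref{lem-fractional-moment} and \ref{lem-zeros2}. Those parts are fine.

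The compound Poisson case contains a genuine gap, exactly where you anticipated trouble. You propose to prove $E|V_\infty|^{-\varepsilon'}<\infty$ directly via a pathwise lower bound $V_\infty\ge J_1\,(1\wedge\tau')\exp(-\sup_{0\le s\le 1}|\xi_s^\flat|)$ ``on a suitable event''. There are two problems. First, unlike in Lemma~\ref{lem-fractional-moment}, where the drift term $\gamma_\eta^0\,ds$ contributes over the whole deterministic interval $[0,1\wedge\tau]$, here the only contribution to $V_\infty$ up to any fixed time comes from the jumps of $\eta$, so your bound can only hold on the event that $\eta$ jumps before both time $1$ and the first large jump of $\xi$; on the complement you have no control, and restarting there reproduces the same difficulty. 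Second, and more fundamentally, the moment bound you are aiming for is false in general. Writing $T$ for the first jump time of $\eta$, one has $V_\infty=e^{-\xi_T}(\Delta\eta_T+V_\infty')$ with $V_\infty'\overset{d}{=}V_\infty$ independent of $(e^{-\xi_T},\Delta\eta_T)$, whence $P(V_\infty<\delta)\ge P\bigl(\xi_T>\log(M/\delta)\bigr)\,P(\Delta\eta_T+V_\infty'\le M)$ for suitable $M$. If $\nu_\xi$ has a heavy right tail (no positive exponential moment --- such $\xi$ are perfectly admissible elements of $\tilde{D}_\eta$), then $P(\xi_T>y)$ decays only polynomially in $y$, so $P(V_\infty<\delta)$ decays only like a power of $1/\log(1/\delta)$ and $E\,V_\infty^{-\varepsilon'}=\infty$ for \emph{every} $\varepsilon'>0$. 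The paper's proof sidesteps this by using the same decomposition at the level of characteristic functions: $\phi_{\log V_\infty}(u)=\phi_{-\xi_T}(u)\,\phi_{\log(\Delta\eta_T+V_\infty')}(u)$, where the first factor never vanishes because $\xi_T$ is infinitely divisible, and the second cannot vanish on non-empty open intervals by Lemma~\ref{lem-zeros2}, since $E(\Delta\eta_T+V_\infty')^{-\varepsilon}\le E(\Delta\eta_T)^{-\varepsilon}<\infty$ by the hypothesis on $\nu_\eta$ and $V_\infty'\ge 0$. You should replace your moment argument for this case by that factorization.
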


Observe that $\tilde{D}_\eta^{\rm exp}$ contains all $\law(\xi_1)
\in \tilde{D}_\eta$ such that $\xi$ is spectrally negative or
spectrally positive. In particular, subordinators are uniquely
determined by $\law(V_\infty)$ and $\law(\eta_1)$.

\begin{proof}[Proof of Theorem \ref{cor-eta-injective1}]
The injectivity of $\tilde{\Phi}_\eta$ on $\tilde{D}_\eta^{\rm exp}$
as well as \eqref{eq-injectivity1} are clear from
Theorem~\ref{thm-injectivity} and Lemma \ref{lem-zeros}. Similarly,
injectivity of $\tilde{\Phi}_\eta$ on $\tilde{D}_\eta$ follows from
Lemmas \ref{lem-zeros2}, \ref{lem-fractional-moment}  and
Theorem~\ref{thm-injectivity} if $\eta$ is a subordinator with
strictly positive drift or if $\sigma_\eta^2 > 0$.

Finally, let us prove injectivity of $\tilde{\Phi}_\eta$ when $\eta$
is a compound Poisson process with $\nu_\eta((-\infty,0)) = 0$ and
$\int_0^1 x^{-\varepsilon} \, \nu_\eta(dx) < \infty$ for some
$\varepsilon > 0$.
 Denote by $T$ the time of the
first jump of $\eta$. Then
$$V_\infty = \int_0^\infty e^{-\xi_{s-}} \, d\eta_s = e^{-\xi_{T-}} \Delta
\eta_T + e^{-\xi_T} \int_T^\infty e^{-(\xi_{s-} - \xi_T)} \, d\eta_s
= e^{-\xi_T} (\Delta \eta_T + V_\infty') \quad \mbox{a.s.},$$ since
$\xi$ and $\eta$ almost surely do not jump together. The random
variable $V_\infty'$ has the same distribution as $V_\infty$ and  is
independent of $(e^{-\xi_T}, \Delta \eta_T)$.  Observe further
that also $\xi_T$ and $\Delta \eta_T$ are independent. It follows
that
\begin{equation*}
\phi_{\log V_\infty}(u) = \phi_{-\xi_T}(u) \, \phi_{\log (\Delta
\eta_T + V_\infty')} (u), \quad u\in \bR.\end{equation*}
Since
$$E (\Delta \eta_T + V_\infty')^{-\varepsilon} \leq E (\Delta \eta_T)^{-\varepsilon} < \infty$$
as a consequence of $V_\infty' \geq 0$ and $\int_0^1
x^{-\varepsilon} \, \nu_\eta(dx) < \infty$, it follows from
Lemma~\ref{lem-zeros2} that $\phi_{\log (\Delta \eta_T +
V_\infty')}$ cannot vanish identically on non-empty open intervals.
Since $\phi_{-\xi_T}(u) \neq 0$ for all $u\in \bR$ as $\xi_T$ is
infinitely divisible,
it follows that  $\phi_{\log V_\infty}$ cannot vanish identically on
non-empty open intervals. Injectivity of $\tilde{\Phi}_\eta$ then
follows from Theorem \ref{thm-injectivity}.
\end{proof}

We do not know if $\Phi_\xi$ and $\tilde{\Phi}_\eta$ will always be
injective, but as we have seen in Theorems
\ref{cor-phi-xi-injective} and \ref{cor-eta-injective1},   the
mappings $\Phi_\xi$ and $\tilde{\Phi}_\eta$ are injective in many
cases. However, if we drop the condition of independence of $\xi$
and $\eta$, an injectivity result  does not hold, as shown in the
following. Therefore, additionally to the definitions at the
beginning of this section, for a L\'evy process $\xi$, let
$$D_\xi^{\rm dep} := \{ \cL (\chi_1,\eta_1) : (\chi,\eta) \;
\mbox{biv. LP such that $\int_0^\infty e^{-\chi_{s-}} \, d\eta_s$
converges a.s. and}\; \cL(\chi_1) = \cL(\xi_1)\}$$
and define the mapping
\begin{eqnarray*} \Phi_\xi^{\rm dep} : D_\xi^{\rm dep}  \to
\mathcal{P}(\bR) ,\quad  \cL(\chi_1,\eta_1)  \mapsto  \cL \left(
\int_0^\infty e^{-\chi_{s-}} \, d\eta_s \right).
\end{eqnarray*}
Then we obtain the following counterexample of injectivity.

\begin{example} \label{ex-1}
Let $\xi= N$ be a Poisson process. Then $\Phi_\xi^{\rm dep}$ is not
injective.
\end{example}

\begin{proof}
Let $(\chi,\eta)$ be a bivariate L\'evy process such that $\cL(
\chi_1,\eta_1) \in D_\xi^{\rm dep}$. By \cite[Thm.
2]{ericksonmaller05}, this means $\cL(\chi_1) = \cL(\xi_1)$ and
$E\log^+| \eta_1| < \infty$. Denote the time of the first jump of
$\chi$ by $T = T(\chi)$. Then
\begin{equation} \label{Al-12}
\int_0^\infty e^{-\chi_{t-}} \, d\eta_t = \eta_{T} + e^{-1}
\int_{T}^\infty e^{-(\chi_{t-} - \chi_{T})} \, d\eta_t.
\end{equation} Since $\int_{T}^\infty e^{-(\chi_{t-} - \chi_{T})} \,
d\eta_t$ has the same distribution as $\int_0^\infty e^{-\chi_{t-}}
\, d\eta_t =: W$, it follows that the characteristic function
$\phi_W$ of $W$ satisfies
$$\phi_W(x) = \prod_{k=0}^\infty \phi_{\eta_{T}} (e^{-k} x), \quad
x \in \bR$$ as shown in \cite{behme-et-al}. Thus, $\cL(W)$ is
determined by $\rho_{\chi,\eta} := \cL (\eta_{T})$ (not necessarily
vice versa!). Now let $(\chi^{(1)},\eta^{(1)}) \in D_\xi^{\rm dep}$
be such that $\eta^{(1)}$ is independent of $\chi^{(1)}$,
$\eta^{(1)}$ is not the zero process and $E \log^+
|\eta^{(1)}_{T(\chi^{(1)})}| < \infty$, and let
$(\chi^{(2)},\eta^{(2)})$ be a bivariate compound Poisson process
without drift and L\'evy measure
$$\nu_{\chi^{(2)},\eta^{(2)}} (dx,dy) = \delta_1(dx)
\rho_{\chi^{(1)}, \eta^{(1)}} (dy).$$ Then $(\chi^{(2)}, \eta^{(2)})
\in D_\xi^{\rm dep}$ and
$$\rho_{\chi^{(2)},\eta^{(2)}}= \cL( \eta^{(2)}_{T(\chi^{(2)})})= \rho_{\chi^{(1)},
\eta^{(1)}}.$$ It follows that both $(\chi^{(1)},\eta^{(1)})$ and
$(\chi^{(2)},\eta^{(2)})$ lead to the same distribution, giving an
example that injectivity is violated.
\end{proof}

\section{Ranges}\label{sec-ranges}
\setcounter{equation}{0}

The results of the previous section may now be used to determine
information on the ranges of the mappings $\Phi_\xi$ and
$\tilde{\Phi}_\eta$ as defined in Section \ref{sec-inj}. We start
with an elementary conclusion, which also follows from \cite[Thm.
2.2]{bertoinlindnermaller08} or \cite[Lem. 3.1]{behme2011}.

\begin{proposition}
 Let $\xi$ be non-deterministic, then  $\Phi_\xi (D_\xi \setminus \{ \law(0)\})$ is a subset of the continuous distributions.
Analoguously, if $\eta$ is non-deterministic, then the range of
$\tilde{\Phi}_\eta$ is a subset of  the continuous distributions.
\end{proposition}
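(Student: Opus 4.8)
The plan is to prove the following dichotomy, from which the proposition is immediate: if $\mu:=\cL(V_\infty)$ has an atom, then either $\eta\equiv 0$ or both $\xi$ and $\eta$ are deterministic. Indeed, for $\Phi_\xi$ with $\xi$ non-deterministic the second alternative is excluded, so an atom would force $\eta\equiv 0$, i.e.\ $\cL(\eta_1)=\cL(0)$, which is excluded on $D_\xi\setminus\{\cL(0)\}$; and for $\tilde\Phi_\eta$ with $\eta$ non-deterministic both alternatives are excluded. (This dichotomy is also contained in \cite[Thm.~2.2]{bertoinlindnermaller08} and \cite[Lem.~3.1]{behme2011}, from which one could simply quote it.) Throughout, let $(U,L)^T$ be the bivariate L\'evy process attached to $(\xi,\eta)^T$ by \eqref{eq-def-UL}, so that $L=\eta$ by independence, $e^{-\xi_t}=\cE(U)_t$, and $U_t$ is a measurable functional of $(\xi_s)_{0\le s\le t}$; write $V^0$ for the GOU process \eqref{GOUdef} started in $0$, and recall that, since $V_\infty$ converges, $\mu$ is the invariant law of $V$ (\cite[Thms.~2.1, 3.6]{behmelindnermaller11}).

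Suppose $\mu$ has an atom. Let $p:=\max_{x\in\bR}\mu(\{x\})>0$ and let $A$ be the (finite, non-empty) set of maximisers. Pick $V_0$ with law $\mu$, independent of $(\xi,\eta)$, and let $V$ be the GOU process \eqref{GOUdef} with this starting variable; by invariance $\cL(V_t)=\mu$ for all $t$, while $V_t=V_t^0+e^{-\xi_t}V_0$ with $V_t^0$ independent of $V_0$. Hence, for every $a\in A$ and $t\ge 0$,
\[
p=\mu(\{a\})=P(V_t=a)=P\bigl(V_t^0+e^{-\xi_t}V_0=a\bigr)=E\bigl[\mu(\{(a-V_t^0)e^{\xi_t}\})\bigr]\le p,
\]
so that $(a-V_t^0)e^{\xi_t}\in A$ a.s. Thus, for each fixed $t$, the affine map $\zeta\mapsto(\zeta-V_t^0)e^{\xi_t}$ sends $A$ into $A$ a.s., and being injective (slope $e^{\xi_t}\ne 0$) it is a bijection of the finite set $A$. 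The slopes of the affine self-bijections of a finite subset of $\bR$ form a finite subgroup of $(\bR\setminus\{0\},\cdot)$, hence lie in $\{1,-1\}$ whenever $|A|\ge 2$. So, if $|A|\ge 2$, then $e^{\xi_t}=1$ a.s.\ for every $t$, whence $\xi\equiv 0$; then $V_t^0=\eta_t$, and the bijection $a\mapsto a-\eta_t$ of the finite set $A$ forces $\eta_t=0$, so $\eta\equiv 0$ and $\mu=\delta_0$, contradicting $|A|\ge 2$. Therefore $|A|=1$, say $A=\{a_0\}$.

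With $A=\{a_0\}$ the displayed relation reduces to $V_t^0=a_0(1-e^{-\xi_t})$ a.s.\ for all $t$. Since $e^{-\xi_t}=\cE(U)_t=1+\int_{(0,t]}e^{-\xi_{s-}}\,dU_s$ and $V_t^0=\int_{(0,t]}e^{-\xi_{s-}}\,d\eta_s$, this says that $\int_{(0,t]}e^{-\xi_{s-}}\,d(\eta+a_0U)_s=0$ for all $t$. Integrating this identically-zero process against the locally bounded predictable integrand $e^{\xi_{s-}}$ and using associativity of the stochastic integral (e.g.\ \cite{protter}) gives $\eta_t+a_0U_t=\int_{(0,t]}e^{\xi_{s-}}e^{-\xi_{s-}}\,d(\eta+a_0U)_s=0$, i.e.\ $\eta=-a_0U$. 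If $a_0=0$, then $\eta\equiv 0$. If $a_0\ne 0$, then $\eta$ is measurable with respect to $\sigma(\xi)$ (as $U$ is a functional of $\xi$) and independent of $\xi$, hence a.s.\ constant; a L\'evy process with a.s.\ constant marginals is a pure drift, and then so is $U=-a_0^{-1}\eta$, so $e^{-\xi_t}=\cE(U)_t$ is a deterministic exponential and $\xi$ is deterministic as well. This proves the dichotomy, and with it the proposition.

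The single non-routine point is the remark that the slopes of the affine self-bijections of a finite subset of $\bR$ can only be $1$ or $-1$ (because $(\bR\setminus\{0\},\cdot)$ has no non-trivial finite subgroup), which is what pins $e^{\xi_t}\equiv 1$ in the case $|A|\ge 2$; everything else is bookkeeping with the invariance identity and the stochastic exponential. An alternative, shorter route is to quote the degenerate-or-continuous dichotomy of \cite[Thm.~2.2]{bertoinlindnermaller08} and then only rule out degeneracy, which is precisely the content of the third paragraph.
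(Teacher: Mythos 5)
Your proof is correct, but it takes a genuinely different route from the one in the paper. The paper's own proof is two lines given the machinery already built: it quotes the pure type theorem of Alsmeyer, Iksanov and R\"osler \cite[Thm.~1.3]{alsmeyeretal} to conclude that $\cL(V_\infty)$ is either continuous or a Dirac measure, and then excludes the Dirac case by inserting $\phi_{V_\infty}(u)=e^{iuk}$ into identity \eqref{eq-relation3} of Theorem \ref{thm-generatorlimit}, which yields $\psi_L(u)=-\psi_U(ku)$ and hence (comparing signs of the real parts of the two characteristic exponents) forces $U$ and $L$, and so $\xi$ and $\eta$, to be deterministic. You instead prove the required ``atom implies degenerate'' dichotomy from scratch: your maximal-atom argument (the random affine map $\zeta\mapsto(\zeta-V_t^0)e^{\xi_t}$ must permute the finite set of atoms of maximal mass, and a finite subgroup of $(\bR\setminus\{0\},\cdot)$ is contained in $\{\pm 1\}$) is in effect a self-contained reproof of the relevant part of the cited pure type theorem, and your exclusion of the degenerate case is pathwise, via $\cE(U)$ and associativity of the stochastic integral, rather than via characteristic exponents. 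Both arguments are sound; yours buys independence from the generator results of Section \ref{sec-relations} and from the external reference, at the cost of length, while the paper's buys brevity by leaning on \eqref{eq-relation3} and on \cite{alsmeyeretal}. Two points you should make explicit: the relation $V_t^0=a_0(1-e^{-\xi_t})$ is obtained a.s.\ for each fixed $t$, and you need indistinguishability (which follows from right-continuity of both sides along a countable dense set of times) before you may integrate $e^{\xi_{s-}}$ against the resulting zero semimartingale; and the conditioning step uses Borel measurability of $x\mapsto\mu(\{x\})$, which is standard but worth a word.
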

\begin{proof}
 It follows directly from \cite[Thm. 1.3]{alsmeyeretal} that the distribution of the treated exponential functional
fulfills a pure type theorem, in particular it is either continuous,
or a Dirac measure. Suppose that $L_1=\eta_1\not\equiv 0 $.
Inserting the characteristic function $\phi(u)=e^{iuk}$, $k\in\bR$,
of a Dirac measure in \eqref{eq-relation3}, one immediately obtains
$\psi_L(u)=-\psi_U(ku)$ which can only hold for deterministic
processes $L_t=-kU_t=\gamma_L t$ with $k\neq 0$ and hence
deterministic $\eta$ and $\xi$.
\end{proof}

Recall the definition of $\Phi_\xi^{\rm dep}$ from the previous
section. Also recall that a distribution $\mu$ on $(\bR,\cB_1)$ is
called {\it $b$-decomposable}, where $b\in (0,1)$, if there exists a
probability measure $\rho$ on $(\bR,\cB_1)$ such that
$\widehat{\mu}(z) = \widehat{\mu}(bz) \widehat{\rho}(z)$ for all
$z\in \bR$.

\begin{proposition} \label{prop-range1}
Let $\xi=N$ be a Poisson process. Then the range of $\Phi_{\xi}^{\rm
dep}$ is the class of all $e^{-1}$-decomposable distributions.
\end{proposition}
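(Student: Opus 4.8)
The plan is to establish the two inclusions separately. For the inclusion of the range in the class of $e^{-1}$-decomposable distributions, I would take an arbitrary $\cL(\chi_1,\eta_1)\in D_\xi^{\rm dep}$, so that $\chi$ is a standard Poisson process, and let $T$ be the time of its first jump, which is a stopping time for the natural filtration of $(\chi,\eta)$. The strong Markov property at $T$ gives that $((\chi_{T+s}-\chi_T,\eta_{T+s}-\eta_T))_{s\geq 0}$ is independent of $\eta_T$ and equal in law to $(\chi,\eta)$; since $\chi_{T-}=0$ and $\Delta\chi_T=1$, exactly as in Example~\ref{ex-1} this yields
$$W:=\int_0^\infty e^{-\chi_{t-}}\,d\eta_t=\eta_T+e^{-1}W',\qquad W'\overset{d}{=}W,\ W'\text{ independent of }\eta_T.$$
Passing to characteristic functions, $\phi_W(z)=\phi_{\eta_T}(z)\,\phi_W(e^{-1}z)$ for all $z\in\bR$, which is precisely the statement that $\cL(W)=\Phi_\xi^{\rm dep}(\cL(\chi_1,\eta_1))$ is $e^{-1}$-decomposable, with cofactor $\cL(\eta_T)$.

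For the reverse inclusion, I would start from an $e^{-1}$-decomposable $\mu$, so there is a probability measure $\rho$ with $\widehat\mu(z)=\widehat\mu(e^{-1}z)\,\widehat\rho(z)$ for all $z\in\bR$. Iterating and using $\widehat\mu(e^{-n}z)\to\widehat\mu(0)=1$ gives $\widehat\mu(z)=\prod_{k=0}^\infty\widehat\rho(e^{-k}z)$ with pointwise convergent partial products; the $n$-th partial product is the characteristic function of $S_n:=\sum_{k=0}^{n-1}e^{-k}Y_k$ for an i.i.d.\ sequence $(Y_k)_{k\geq 0}$ with $\cL(Y_0)=\rho$, so by the L\'evy continuity theorem $S_n$ converges in distribution to a law equal to $\mu$. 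Since the summands are independent, convergence in distribution of $S_n$ forces a.s.\ convergence (a classical fact for series of independent summands), hence in particular $e^{-k}Y_k\to 0$ a.s.; a Borel--Cantelli argument applied to the independent events $\{|Y_k|>e^{k}\}$ then yields $\int_\bR\log^+|y|\,\rho(dy)<\infty$. I would then take $(\chi,\eta)$ to be the bivariate compound Poisson process without drift and with L\'evy measure $\delta_1(dx)\,\rho(dy)$. Its first marginal $\chi$ is a standard Poisson process, so $\cL(\chi_1)=\cL(\xi_1)$, and $E\log^+|\eta_1|<\infty$, so $\int_0^\infty e^{-\chi_{s-}}\,d\eta_s$ converges a.s.\ by \cite[Thm.~2]{ericksonmaller05}; hence $\cL(\chi_1,\eta_1)\in D_\xi^{\rm dep}$. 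Finally, since $\chi$ and $\eta$ jump at the same times with $\Delta\chi_{T_i}=1$ and $\eta$ has no drift, Example~\ref{example-start} gives $V_\infty=\sum_{i=0}^\infty e^{-i}(\eta_{T_{i+1}}-\eta_{T_i})$ with $(\eta_{T_{i+1}}-\eta_{T_i})_{i\geq 0}$ i.i.d.\ distributed as $\rho$, so $\phi_{V_\infty}(u)=\prod_{i=0}^\infty\widehat\rho(e^{-i}u)=\widehat\mu(u)$ and $\Phi_\xi^{\rm dep}(\cL(\chi_1,\eta_1))=\mu$.

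The two first-jump decompositions are routine and essentially repeat computations already carried out in Examples~\ref{example-start} and \ref{ex-1}. The step I expect to require the most care, and which is really the crux of the reverse inclusion, is showing that the cofactor $\rho$ of an $e^{-1}$-decomposable distribution automatically has a finite logarithmic moment: this is exactly what makes the exponential functional built from $\rho$ well defined, and it amounts to the classical identification of $b$-decomposable laws with laws of a.s.\ convergent series $\sum_{k\geq 0}b^kY_k$ with i.i.d.\ $\log^+$-integrable $Y_k$, which I would prove via the continuity-theorem/Borel--Cantelli argument sketched above rather than merely quoting it.
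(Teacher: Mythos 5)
Your proposal is correct and follows essentially the same route as the paper: the forward inclusion via the first-jump decomposition \eqref{Al-12}, and the reverse inclusion by iterating the $e^{-1}$-decomposability relation to obtain $\mu$ as the a.s.\ limit of $\sum_k e^{-k}Y_k$, extracting the finite $\log^+$-moment of the cofactor $\rho$ by Borel--Cantelli, and then realizing $\mu$ through the bivariate compound Poisson process with L\'evy measure $\delta_1(dx)\,\rho(dy)$. The only difference is that you spell out the continuity-theorem and Borel--Cantelli steps in more detail than the paper, which simply asserts them.
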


\begin{proof}
That all distributions in the range of $\Phi_{\xi}^{\rm dep}$ are
$e^{-1}$-decomposable is clear from \eqref{Al-12}. Conversely, let
$\cL(W)$ be an $e^{-1}$-decomposable distribution. Then there exists
an i.i.d. noise sequence $(Z_n)_{n\in \bN_0}$ such that
\begin{equation} \label{eq-range1}
\sum_{k=0}^n e^{-k} Z_k \stackrel{d}{\to} W, \quad n\to\infty,
\end{equation}
which follows by iterating the defining equation $W \stackrel{d}{=}
e^{-1} W' + Z$ with $W'$ independent of $Z$ for
$e^{-1}$-decomposability. Hence $\sum_{k=0}^n e^{-k} Z_k$ converges
in distribution and hence almost surely as $n\to\infty$ and the
Borel-Cantelli-lemma implies that $Z_0$ must have finite
$\mbox{log}^+$-moment. Now define the compound Poisson process
$(\chi,\eta)$ without drift and L\'evy measure
$$\nu_{\chi,\eta} (dx,dy) = \delta_1 (dx) \cL(Z_0)(dy).$$
Then $\cL(\chi,\eta) \in D_\xi^{\rm dep}$ (due to the finite log$^+$
moment of $Z_0$), and with the notations of Example \ref{ex-1} it
follows that $\cL(\eta_{T(\chi)}) = \cL(Z_0)$. Hence $\Phi_\xi^{\rm
dep} (\cL(\chi,\eta)) = \cL(W)$.
\end{proof}

\begin{proposition}
 Let $\xi=N$ be a Poisson process. Then the range of $\Phi_{\xi}$ is a subset of the class of infinitely divisible $e^{-1}$-decomposable distributions
 without Gaussian part.
\end{proposition}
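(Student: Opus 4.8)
The plan is to use the explicit series representation of $V_\infty$ available for compound Poisson $\xi$ (Example \ref{example-start}) and the distributional fixed point equation it yields, from which all three required properties can be read off.

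First I would specialize Example \ref{example-start} to $\xi = N$: as every jump of $N$ has height $1$, writing $T_i$ for the $i$th jump time of $N$ we obtain $V_\infty = \sum_{i=0}^\infty e^{-i} H_i$ with $(H_i)_{i\ge 0}$ i.i.d., $H_i \stackrel{d}{=} \eta_{T_1}$, together with the fixed point equation $V_\infty \stackrel{d}{=} e^{-1} V_\infty' + H_0$, where $V_\infty' \stackrel{d}{=} V_\infty$ is independent of $H_0$ (here $X = e^{-\Delta \xi_{T_1}} = e^{-1}$ is deterministic, so independence is automatic). Passing to characteristic functions gives $\phi_{V_\infty}(z) = \phi_{V_\infty}(e^{-1}z)\,\phi_{H_0}(z)$ for all $z \in \bR$, which is exactly $e^{-1}$-decomposability with $\rho = \cL(H_0)$; this part is immediate.

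For infinite divisibility I would argue as in the proof of Theorem \ref{cor-phi-xi-injective}: $H_0 \stackrel{d}{=} \eta_{T_1}$ is infinitely divisible, being the time-one value of $\eta$ subordinated by the Gamma subordinator $S$ with $S_1 \stackrel{d}{=} T_1$ (by \cite[Thm. 30.1]{sato}); hence each partial sum $\sum_{i=0}^n e^{-i} H_i$ is infinitely divisible, and $\cL(V_\infty)$, being their weak limit, is infinitely divisible by \cite[Lem. 7.8]{sato}.

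The genuinely new point --- and the step I expect to require the most care --- is the absence of a Gaussian component. The idea is to compare Gaussian variances on the two sides of the fixed point equation: since $V_\infty$, $V_\infty'$ and $H_0$ are all infinitely divisible and $V_\infty' \perp H_0$, this gives $\sigma_{V_\infty}^2 = e^{-2}\sigma_{V_\infty}^2 + \sigma_{H_0}^2$, so it suffices to prove $\sigma_{H_0}^2 = 0$. For this I would note that $H_0 \stackrel{d}{=} \eta_{S_1}$ with $S$ the Gamma subordinator, whose drift is $0$; by \cite[Thm. 30.1]{sato} the Gaussian part of $\eta_{S_1}$ equals $\sigma_\eta^2$ times the drift of $S$, hence vanishes. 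Then $\sigma_{V_\infty}^2 = e^{-2}\sigma_{V_\infty}^2$ forces $\sigma_{V_\infty}^2 = 0$. The subtlety to watch is that one cannot conclude $\sigma_{V_\infty}^2 = 0$ merely from the partial sums $\sum_{i=0}^n e^{-i} H_i$ having zero Gaussian variance, because weak limits of infinitely divisible laws do not in general preserve the Gaussian variance (a Gaussian component can arise in the limit from accumulating small jumps); it is essential to use the exact fixed point identity rather than a limiting argument.
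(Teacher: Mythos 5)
Your proof is correct and follows essentially the same route as the paper: the series representation from Example \ref{example-start} gives $e^{-1}$-decomposability, infinite divisibility of $\eta_{T_1}$ without Gaussian part comes from subordination via \cite[Thm. 30.1]{sato}, and both properties are then transferred to $V_\infty$. The one place where you go beyond the paper is the Gaussian part: the paper simply asserts that the weak limit $W$ of the partial sums $\sum_{k=0}^n e^{-k}Z_k$ has zero Gaussian part, whereas you correctly note that weak limits of infinitely divisible laws need not preserve the Gaussian variance (small jumps can accumulate into a Gaussian component) and instead derive $\sigma_{V_\infty}^2 = e^{-2}\sigma_{V_\infty}^2 + \sigma_{H_0}^2 = e^{-2}\sigma_{V_\infty}^2$ from the fixed point identity, which closes that step cleanly and rigorously.
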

\begin{proof}
 By Proposition \ref{prop-range1} it remains to show that $W=\int_{(0,\infty)} e^{-N_{s-}}d\eta_s$ is infinitely divisible and has zero Gaussian part. Therefore
denote the time of the first jump of $N$ by $T$, then $Z_0:=\eta_T$
is infinitely divisible without Gaussian part as a consequence of
\cite[Thm. 30.1]{sato}. Hence by \eqref{eq-range1} also $W$ is infinitely divisible and the Gaussian part of $W$ is zero.
\end{proof}

It is well known that the OU process is a Gaussian process whose
stationary distribution is normally distributed. In particular
$\int_{(0,\infty)} e^{-t\sigma^2/2}d(\sigma W_t)$ for $W_t$ a
standard Brownian motion (Wiener process) is standard normally
distributed. The following theorem shows that this is the only
possible choice of $(\xi,\eta)^T$ which leads to a centered normal
distribution.

\begin{theorem} \label{thm-nonormal}
Let $\xi$ and $\eta$ be two independent L\'evy processes such that
$\int_0^\infty e^{-\xi_{s-}} \, d\eta_s$ converges almost surely.
Let $v> 0$. Then $\law(V_\infty) = N(0,v^2)$ if and only if there is
$\gamma_\xi > 0$ such that $\xi_t = \gamma_\xi t$ and $\eta_t =
(2\gamma_\xi)^{1/2} v W_t$, where $(W_t)_{t\geq 0}$ is a standard
Brownian motion.
\end{theorem}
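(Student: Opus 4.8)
The ``if'' direction is the classical Ornstein--Uhlenbeck computation, which I would just record: if $\xi_t=\gamma_\xi t$ with $\gamma_\xi>0$ and $\eta_t=(2\gamma_\xi)^{1/2}v\,W_t$, then $V_\infty=(2\gamma_\xi)^{1/2}v\int_0^\infty e^{-\gamma_\xi s}\,dW_s$ is a centred Wiener integral with variance $2\gamma_\xi v^2\int_0^\infty e^{-2\gamma_\xi s}\,ds=v^2$, so $\law(V_\infty)=N(0,v^2)$. The content is the converse, and the plan is to insert $\phi_{V_\infty}(u)=e^{-v^2u^2/2}$ into Corollary~\ref{cor-generatorlimit} and then run a convexity argument in the variable $u^2$.

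First I would observe that $N(0,v^2)$ has finite second moment, so \eqref{eq-relation6} applies. Using $\phi_{V_\infty}'(u)=-v^2u\,\phi_{V_\infty}(u)$, $\phi_{V_\infty}''(u)=v^2(v^2u^2-1)\phi_{V_\infty}(u)$ and $\phi_{V_\infty}(ue^{-y})=e^{v^2u^2(1-e^{-2y})/2}\phi_{V_\infty}(u)$, and dividing \eqref{eq-relation6} by $\phi_{V_\infty}(u)>0$, one obtains
\[
\psi_\eta(u)=(\sigma_\xi^2-\gamma_\xi)v^2u^2-\tfrac{\sigma_\xi^2v^4}{2}u^4-\int_{\RR}\bigl(e^{v^2u^2(1-e^{-2y})/2}-1-v^2u^2y\,\mathds{1}_{|y|\le1}\bigr)\,\nu_\xi(dy).
\]
The right-hand side is real, hence $\psi_\eta$ is real and even, so $\eta$ is symmetric and $\psi_\eta(u)=-\tfrac{\sigma_\eta^2}{2}u^2+\int_{\RR}(\cos(uy)-1)\,\nu_\eta(dy)$ with $\nu_\eta$ symmetric.

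Next comes the heart of the argument: a convexity comparison. Setting $p=u^2$ and $P(p):=-\psi_\eta(\sqrt p)$ for $p\ge0$, the displayed identity becomes
\[
P(p)=(\gamma_\xi-\sigma_\xi^2)v^2p+\tfrac{\sigma_\xi^2v^4}{2}p^2+\int_{\RR}g_y(p)\,\nu_\xi(dy),\qquad g_y(p):=e^{v^2p(1-e^{-2y})/2}-1-v^2py\,\mathds{1}_{|y|\le1},
\]
while the L\'evy--Khintchine form of $\psi_\eta$ gives $P(p)=\tfrac{\sigma_\eta^2}{2}p+\int_{\RR}(1-\cos(\sqrt p\,y))\,\nu_\eta(dy)$. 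Each $g_y$ is convex on $[0,\infty)$ with $g_y(0)=0$ (an exponential minus an affine function), and a short estimate using the compensation — the integrand is $O(y^2)$ near $y=0$ and controlled elsewhere on bounded $p$-ranges — shows that $p\mapsto\int g_y(p)\,\nu_\xi(dy)$ is finite, convex, and tends to $0$ as $p\to0^+$; thus $P$ is convex on $[0,\infty)$ with $P(0)=0$. For such $P$ the secant slope $p\mapsto P(p)/p$ from the origin is non-decreasing, whereas $1-\cos(\sqrt p\,y)\le\min(2,py^2/2)$ and dominated convergence give $P(p)/p\to\sigma_\eta^2/2$; since the $\eta$-formula also gives $P(p)/p\ge\sigma_\eta^2/2$, this squeezes $P(p)/p\equiv\sigma_\eta^2/2$, i.e.\ $\int(1-\cos(\sqrt p\,y))\,\nu_\eta(dy)=0$ for every $p>0$. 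As the integrand is non-negative, this forces $\nu_\eta=0$ (take two incommensurable values of $p$), so $\eta$ is a symmetric Brownian motion and $P(p)=\tfrac{\sigma_\eta^2}{2}p$ is affine.

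Comparing the two expressions for $P$ then yields that $\tfrac{\sigma_\xi^2v^4}{2}p^2+\int g_y(p)\,\nu_\xi(dy)$ is an affine function of $p$; but $p\mapsto p^2$ and (for $y\neq0$) each $g_y$ are strictly convex, so this expression is strictly convex unless $\sigma_\xi^2=0$ and $\nu_\xi=0$, and an affine function cannot be strictly convex, whence $\xi_t=\gamma_\xi t$. Plugging back, $\psi_\eta(u)=-\gamma_\xi v^2u^2$, so $\sigma_\eta^2=2\gamma_\xi v^2$; finally a.s.\ convergence of $V_\infty$ forces $\xi_t\to+\infty$ by \cite[Thm.~2]{ericksonmaller05}, i.e.\ $\gamma_\xi>0$, and $\eta_t=(2\gamma_\xi)^{1/2}v\,W_t$. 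The hard part, I expect, is not the convexity idea itself but the accompanying measure-theoretic bookkeeping: checking that all integrals against $\nu_\xi$ and $\nu_\eta$ converge, that $\int g_y(p)\,\nu_\xi(dy)\to0$ as $p\to0^+$ and $\tfrac1p\int(1-\cos(\sqrt p\,y))\,\nu_\eta(dy)\to0$ as $p\to\infty$ (each needing a $\nu$-integrable majorant of the form $\mathrm{const}\cdot\min(1,y^2)$), and the elementary-but-fussy fact that integrating a family of strictly convex functions over a set of positive $\nu_\xi$-measure again yields a strictly convex function.
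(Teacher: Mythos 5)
Your proposal is correct, and it takes a genuinely different route from the paper's proof. Both arguments start identically, by inserting $\phi_{V_\infty}(u)=e^{-v^2u^2/2}$ into \eqref{eq-relation6} and dividing by the (strictly positive) Gaussian characteristic function; your resulting identity agrees with the paper's \eqref{eq-relation10}. From there the paper proceeds by asymptotic analysis in $u$: dividing by $u^5$ and $u^4$ and using Fatou's lemma to kill $\nu_\xi|_{(0,\infty)}$ and $\sigma_\xi^2$, then dividing by $u^2$ to get finite variation of $\xi$, then representing $\int_{(-\infty,0)}\bigl(e^{-u^2(e^{-2y}-1)/2}-1\bigr)\nu_\xi(dy)$ as $\int_{\RR}(e^{iuz}-1)\,T(\rho\otimes\nu_\xi|_{(-\infty,-\varepsilon]})(dz)$ with $\rho$ standard normal and $T(x,y)=x\sqrt{e^{-2y}-1}$, and invoking the convergence theorem for infinitely divisible laws \cite[Thm.~8.7]{sato} to kill $\nu_\xi|_{(-\infty,0)}$; finally it identifies $\eta$ by appealing to the injectivity of $\Phi_\xi$ for deterministic $\xi$ (Theorem~\ref{cor-phi-xi-injective}). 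You instead observe that $\psi_\eta$ is real and even (so $\eta$ is symmetric), pass to $p=u^2$, and exploit that the $\xi$-side expresses $P(p)=-\psi_\eta(\sqrt p)$ as a sum of convex functions of $p$ vanishing at $0$, so that the secant slope $P(p)/p$ is non-decreasing; since the $\eta$-side forces $P(p)/p\geq\sigma_\eta^2/2$ with limit $\sigma_\eta^2/2$ at infinity, you get $\int(1-\cos(\sqrt p\,y))\,\nu_\eta(dy)=0$ for all $p$, hence $\nu_\eta=0$, and then strict convexity of $p^2$ and of each $g_y$, $y\neq0$, kills $\sigma_\xi^2$ and $\nu_\xi$ in one stroke. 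Your route is more elementary and self-contained: it avoids both Sato's convergence theorem and the appeal to the injectivity result, and it determines $\eta$ and $\xi$ simultaneously rather than in the order $\xi$ first, $\eta$ by injectivity. The paper's route, on the other hand, illustrates the technique of reading off a limiting L\'evy triplet, which is reused elsewhere.

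The steps you flag as bookkeeping are indeed the only ones requiring care, and they all go through: absolute convergence of $\int_{\RR}g_y(p)\,\nu_\xi(dy)$ follows from the bound $|g_y(p)|\leq C(p)\min(1,y^2)$ (it is $O(y^2)$ near $0$ because the two compensating terms $v^2p(1-e^{-2y})/2$ and $v^2py$ agree to first order, and bounded for $|y|>1$ at fixed $p$), and is in any case built into the derivation of \eqref{eq-relation6}; the domination $p^{-1}(1-\cos(\sqrt p\,y))\leq \tfrac12 y^2\mathds{1}_{|y|\leq1}+2\mathds{1}_{|y|>1}$ justifies $P(p)/p\to\sigma_\eta^2/2$; the conclusion $\nu_\eta=0$ from $\nu_\eta\bigl(\RR\setminus\tfrac{2\pi}{\sqrt p}\ZZ\bigr)=0$ for two values of $p$ with irrational ratio of square roots is as you say; and strict convexity of $p\mapsto\int g_y(p)\,\nu_\xi(dy)$ when $\nu_\xi\neq0$ follows by restricting to a set $\{|y|\geq\delta\}$ of positive $\nu_\xi$-measure and noting that the second difference of the integral is the integral of the (everywhere non-negative, there strictly positive) second differences. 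One small point worth making explicit in a final write-up: the passage from ``$\psi_\eta$ real and even'' to the cosine form of the L\'evy--Khintchine representation uses that for a symmetric L\'evy measure the odd integrand $\sin(uy)-uy\mathds{1}_{|y|\leq1}$ is absolutely $\nu_\eta$-integrable and integrates to zero, and that the location parameter vanishes.
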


\begin{proof}
That $\law(V_\infty) = N(0,v^2)$ if $\xi$ and $\eta$ are as
described is well known and follows as discussed above. Let us show
the converse and assume that $V_\infty$ is $N(0,v^2)$-distributed.
By replacing $\eta$ by $v^{-1}\eta$ we may assume that $v=1$. Inserting
$\phi_{V_\infty}(u) = e^{-u^2/2}$ in \eqref{eq-relation6}, we obtain
for $u\in \bR$
\begin{equation} \label{eq-relation10}
\psi_\eta(u) = -\gamma_\xi u^2 - \sigma_\xi^2 (u^4-2u^2)/2 -
\int_{\bR} \left( e^{-u^2 (e^{-2y} -1)/2} -1 - u^2 y
\mathds{1}_{|y|\leq 1} \right)\, \nu_\xi(dy).
\end{equation}
For given $u\in \bR$ denote $$f_u(y) := e^{-u^2 (e^{-2y} -1)/2} -1 -
u^2 y \mathds{1}_{|y|\leq 1}, \quad y \in \bR\setminus \{0\}.$$
 We shall first investigate the limit behavior of
\eqref{eq-relation10} as $u\to \infty$ when divided by appropriate
powers of $u$ and from that obtain information about the
characteristic triplet of $\xi$. To do so, observe first that there
are constants $C_1, C_2, C_3>0$ such that
\begin{eqnarray*}
|e^{-x} - 1 + x - x^2/2| & \leq & C_1 x^2 \quad \forall\; x > 0,\\
|e^{-2y }- 1 + 2y| & \leq & C_2 y^2 \quad \forall\; y \in [-1,1],
\quad \mbox{and} \\
(e^{-2y} - 1)^2 & \leq & C_3 y^2 \quad \forall\; y \in [-1,1].
\end{eqnarray*}
Let $y_0 \in [-1,0)$. Then  $|f_u(y)| \leq 1+ u^2$ for $y< y_0$, and
for $y \in [y_0,0)$ we can estimate
\begin{eqnarray*}
|f_u(y)| & \leq &  \left| -u^2 (e^{-2y}-1)/2 + u^4 (e^{-2y} -1)^2/8
- u^2
y\right| + C_1 u^4 (e^{-2y} -1)^2 /4 \\
& \leq & u^2 C_2 y^2/2 + u^4 C_3 y^2/8 + C_1 C_3 u^4y^2/4.
\end{eqnarray*}
Using dominated convergence, this gives
$$\limsup_{u\to \infty} u^{-4} \int_{-\infty}^0 |f_u(y)| \,
\nu_\xi(dy) \leq (C_3/8 + C_1 C_3/4) \int_{[y_0, 0)} y^2\,
\nu_\xi(dy),$$ and letting $y_0 \uparrow 0$ we see that
\begin{equation} \label{eq-relation11}
\lim_{u\to\infty} u^{-4} \int_{-\infty}^0 |f_u(y)| \, \nu_\xi(dy) =
0.
\end{equation}
Now let $y>0$. Then
$$f_u(y) \geq \left( u^2 (1-e^{-2y})/2 - u^2 y\right)
\mathds{1}_{(0,1]}(y) \geq -u^2 (C_2/2) y^2 \mathds{1}_{(0,1]}(y).$$
Since also $\lim_{u\to\infty} u^{-5} f_u(y) = +\infty$ for $y>0$ and $\lim_{u\to \infty} \int_{(0,1]} u^{-3} y^2 \, \nu_\xi(dy) = 0$, we obtain from Fatou's lemma
\begin{eqnarray*}
\lefteqn{\liminf_{u\to\infty} u^{-5} \int_{(0,\infty)}
f_u(y) \, \nu_\xi(dy) }\\
& = & \liminf_{u\to\infty} \int_{(0,\infty)} u^{-5} \left( f_u(y) + u^2 (C_2/2) y^2 \mathds{1}_{(0,1]} (y) \right) \, \nu_\xi(dy) \\
& \geq & \int_{(0,\infty)} \liminf_{u\to\infty}  \left( u^{-5} f_u(y) + u^{-3} (C_2/2) y^2 \mathds{1}_{(0,1]} (y) \right) \, \nu_\xi(dy) \\
& = & \int_{(0,\infty)} \infty \, \nu_\xi(dy) =
\infty \,  \nu_\xi((0,\infty)).
 \end{eqnarray*}
Dividing \eqref{eq-relation10} by $u^5$ and observing that
$\lim_{u\to\infty} u^{-2} \psi_\eta(u) = -\sigma_\eta^2 /2 < \infty$
(cf. \cite[Lem. 43.11]{sato}) and hence $\lim_{u\to\infty} u^{-5} \psi_\eta(u) = 0$, this together with
\eqref{eq-relation11} gives $\nu_\xi((0,\infty)) = 0$. Similarly,
dividing \eqref{eq-relation10} by $u^4$, we obtain $\sigma_\xi^2 =
0$ by \eqref{eq-relation11}.

It remains to show that $\nu_\xi((-\infty,0)) = 0$. In doing so, we
shall first establish that $\xi$ must be of finite variation. Recall
that
\begin{eqnarray*}
e^{-x} - 1 + x & \geq & 0 \quad \forall\; x \geq 0 \quad
\mbox{and}\\
e^{-x} - 1 + x & \geq & x/2 \quad \forall\; x \geq 4.
\end{eqnarray*}
Let $y <0$. Then $f_u(y) \geq -1$ for $y<  -1$, and for $y\in
[-1,0)$ we estimate
\begin{eqnarray*}
f_u(y) & = & e^{-u^2 (e^{-2y}-1)/2} - 1 + u^2 (e^{-2y}-1)/2 - u^2
(e^{-2y}-1) / 2 - u^2 y \\
& \geq & u^2 (e^{-2y}-1)/4 \, \mathds{1}_{ \{u^2 (e^{-2y}-1)/2 \geq
4\} } - C_2 u^2 y^2/2.
\end{eqnarray*}
An application of Fatou's lemma then shows
$$\liminf_{u\to \infty} u^{-2} \int_{(-\infty, 0)} f_u(y) \,
\nu_\xi(dy) \geq -C_2 / 2 \, \int_{[-1,0)} y^2 \, \nu_\xi(dy) +
\int_{[-1,0)} (e^{-2y}-1)/4 \, \nu_\xi(dy).$$ But since
$\lim_{u\to\infty} u^{-2} |\psi_\eta (u)| < \infty$, dividing
\eqref{eq-relation10} by $u^2$ and letting $u\to\infty$ gives
$\int_{[-1,0)} (e^{-2y}-1) \, \nu_\xi(dy) < \infty$, hence
$\int_{[-1,0)} |y| \, \nu_\xi (dy) < \infty$, so that $\xi$ is of
finite variation. Equation \eqref{eq-relation10} can now be
rewritten as
\begin{equation}\label{eq-relation12}
\psi_\eta(u) + \int_{(-\infty,0)} \left( e^{-u^2 (e^{-2y}-1)/2}
-1\right) \, \nu_\xi(dy) = -\gamma_\xi^0 u^2,
\end{equation}
where $\gamma_\xi^0$ is the drift of $\xi$. Since $\xi_t\to \infty$
as $t\to\infty$ and $\xi$ is spectrally negative, we must have
$\gamma_\xi^0 > 0$.

Let $\rho$ denote the standard normal distribution and define the
mapping $T$ by
$$T: \bR \times (-\infty,0) \to \bR, \quad (x,y) \mapsto x
\sqrt{e^{-2y}-1}.$$ Then for any $\varepsilon > 0$,
\begin{eqnarray*}
\lefteqn{\int_{(-\infty,-\varepsilon]} \left( e^{-u^2 (e^{-2y}-1)/2}
- 1 \right) \, \nu_\xi(dy)} \\
 & = & \int_{\bR} \int_{\bR}
\left(e^{iu x \sqrt{e^{-2y}-1} }- 1\right) \, \rho(dx) \,
{\nu_\xi}|_{(-\infty,-\varepsilon]}(dy) \\
& = & \int_{\bR} (e^{iuz} - 1) \,  T(\rho \otimes
{\nu_\xi}|_{(-\infty,-\varepsilon]}) (dz).
\end{eqnarray*}
With $$\overline{\psi}_\varepsilon (u) := \psi_\eta(u) + \int_{\bR}
(e^{iuz} - 1) \,  T(\rho \otimes
{\nu_\xi}|_{(-\infty,-\varepsilon]}) (dz)$$ it follows from
\eqref{eq-relation12} that $\lim_{\varepsilon \downarrow 0}
\overline{\psi}_\varepsilon (u) = -\gamma_\xi^0 u^2$. But since
$\overline{\psi}_\varepsilon$ is the L\'evy-Khintchine exponent of
an infinitely divisible distribution with L\'evy measure $\nu_\eta +
T(\rho \otimes {\nu_\xi}|_{(-\infty,-\varepsilon]})$, since $T(\rho
\otimes {\nu_\xi}|_{(-\infty,-\varepsilon]})$ is increasing as
$\varepsilon \downarrow 0$, and since $u\mapsto -\gamma_\xi^0 u^2$
is the L\'evy-Khintchine exponent of a Gaussian random variable, it
follows from \cite[Thm. 8.7]{sato} that $T(\rho \otimes
{\nu_\xi}|_{(-\infty,-\varepsilon]}) = 0$ for any $\varepsilon > 0$,
hence $\nu_\xi ((-\infty,0)) = 0$.

We have shown that $\xi_t = \gamma_\xi^0 t$. Injectivity of  the
mapping $\Phi_\xi$ (cf. Theorem~\ref{cor-phi-xi-injective}) together
with the sufficiency part show that necessarily $\eta_t =
(2\gamma_\xi)^{1/2} v W_t$, completing the proof.
\end{proof}

\section{Continuity}\setcounter{equation}{0}
\label{sec-cont}

Another natural question about the mappings $\Phi_\xi$ and $\tilde{\Phi}_\eta$ as defined in Section \ref{sec-inj} is, whether
they are continuous. Hereby we say, that $\Phi_\xi$ is continuous, if for each sequence of L\'evy processes $(\eta^{(n)})_{n\in\NN}$
 such that  $\eta_1^{(n)}\overset{d}\to \eta_1$ as $n\to\infty$ and $\cL(\eta_1^{(n)})\in D_\xi$, $\cL(\eta_1)\in D_\xi$, the sequence
$\Phi_\xi(\cL(\eta_1^{(n)}))$ converges weakly to $\Phi_\xi(\cL(\eta_1))$ as $n\to\infty$, denoted as $\Phi_\xi(\cL(\eta_1^{(n)})) \overset{w}\to
\Phi_\xi(\cL(\eta_1))$ in the following. Continuity of $\tilde{\Phi}_\eta$ is defined similarly. \\
In general $\Phi_\xi$ is not continuous as proven by the following
counterexample. We expect that failure of continuity of
$\Phi_{\xi_t=t}$ is known as it is a very well studied mapping, but
since we were unable to find a ready reference we give a short
proof.

\begin{example} \label{ex-2}
Let $(\xi_t=t)_{t\geq 0}$ be deterministic. Then $\Phi_\xi$ is not continuous.
\end{example}
\begin{proof}
In the given setting we have that $D_\xi$ is $ID_{\log}$, the set of
infinitely divisible distributions with finite log$^+$-moment. Now let
$(Y_i^{(n)})_{i\in\NN}$ be sequences of i.i.d. random variables such
that
$$\nu^{(n)}:=\cL(Y_1^{(n)})=(1-\frac 1 n )\left( \frac 1 2 \delta_1 + \frac 1
2 \delta_{-1}\right) + \frac 1 n \left( \frac 1 2 \delta_{n^n} +
\frac 1 2 \delta_{-n^n}\right)$$ and define the sequence
$(Y_i^{(0)})_{i\in\NN}$ of i.i.d. random variables with
$$\nu^{(0)}:=\cL(Y_1^{(0)})=\left( \frac 1 2 \delta_1 + \frac 1 2
\delta_{-1}\right). $$ Then obviously we have
$Y_i^{(n)}\overset{d}\to Y_i^{(0)}$ as $n\to \infty$. Now for all
$n\in\NN_0$ define the compound Poisson process
$\eta_t^{(n)}:=\sum_{i=1}^{N_t} Y_i^{(n)}$ where $N$ is a Poisson
process with rate $1$, independent of $(Y_i^{(n)})_{i\in \bN}$. Then
$\mu^{(n)}:=\cL(\eta_1^{(n)})\in D_\xi$ for all $n\in \NN_0$ and in
particular for $n\geq 1$ and $z\in\RR$ we have that
\begin{eqnarray*}
\widehat{\mu^{(n)}}(z)&=&\exp\left(\int_{\RR} (e^{izx}-1)\,\nu^{(n)}(dx)\right)= \exp(\widehat{\nu^{(n)}}(z)-1)
\\&\overset{n\to\infty}\to& \exp(\widehat{\nu^{(0)}}(z)-1) = \widehat{\mu^{(0)}}(z)
\end{eqnarray*}
such that $\mu^{(n)}\to \mu^{(0)}$ as $n\to \infty$. But
$\phi_\xi(\mu^{(n)})$ does not converge to $\phi_\xi(\mu^{(0)})$ as
will be shown in the following. Herefore observe that by \cite[Eq.
(17.14)]{sato} the L\'evy measure $\tilde{\nu}^{(n)}$ of
$\phi_\xi(\mu^{(n)})$ fulfills for all $n\geq 0$
\begin{equation*}
\tilde{\nu}^{(n)}([1,\infty))
= \int_\RR \int_0^\infty \mathds{1}_{[1,\infty)}(e^{-s}y) ds \,\nu^{(n)}(dy)
= \int_{(0,\infty)} \log y \,\nu^{(n)}(dy)
\end{equation*}
such that for all $n\geq 1$
$$\tilde{\nu}^{(n)}([1,\infty))= \frac{1}{2} \log n \to \infty \mbox{ as } n\to \infty,$$
whereas $\tilde{\nu}^{(0)}([1,\infty))= 0$. Using \cite[Thm.
8.7]{sato} this shows that $\Phi_\xi(\mu^{(n)})
\not\overset{w}\to \Phi_\xi(\mu)$ as $n\to\infty$, so that
$\Phi_\xi$ is not continuous.
\end{proof}

Continuity of stationary solutions of random recurrence equations
has been studied by Brandt \cite{brandt}. The following is a
special case of his result for i.i.d. sequences, but does not assume
that  $E [\log |B_0^{(n)}|]$, $E [|\log B_0|]$ are finite and that
$E [\log |B_0^{(n)}|] \to E [|\log |B_0|]$ as $n\to\infty$. That
these conditions can be omitted follows readily by an inspection of
Brandt's proof \cite[Thm. 2]{brandt}.

\begin{proposition} \label{prop-brandt}
 Let the sequences $(A_i ,B_i)_{i\in\NN_0}$, $(A_i^{(1)} ,B_i^{(1)})_{i\in\NN_0}$ $(A_i^{(2)} ,B_i^{(2)})_{i\in\NN_0}$, $\ldots$ be i.i.d. such that
$E [\log^+|A_0^{(n)}|] < \infty$, $E [\log^+|B_0^{(n)}|]<\infty$ for
all $n$, $E[\log^+|A_0|] < \infty$ and $E [\log^+|B_0|] < \infty$.
Assume further that $$-\infty< E[\log |A_0^{(n)}|]<0 \quad \mbox{for
all $n$,} \quad \quad -\infty< E[\log |A_0|]<0$$ and that for $n\to
\infty$
\begin{eqnarray*}
 (A_0^{(n)} ,B_0^{(n)}) &\overset{d}\to& (A_0 ,B_0),\\
 E[\log^+ |A_0^{(n)}|] &\to& E[\log^+ |A_0|],\\
  E[\log^+ |B_0^{(n)}|] &\to& E[\log^+ |B_0|] \\
\mbox{and }E[\log |A_0^{(n)}|] &\to& E[\log |A_0|].
\end{eqnarray*}
Let $Y^{(n)}_\infty$ be the unique stationary marginal distribution
of the random recurrence equation $Y^{(n)}_{i+1}=A^{(n)}_i
Y_i^{(n)}+ B_i^{(n)}$, $i\in\NN_0$, and define $Y_\infty$
analoguously. Then
$$(A_0^{(n)} ,B_0^{(n)}, Y^{(n)}_\infty) \overset{d}\to (A_0 ,B_0, Y_\infty)\quad \mbox{as}\quad n\to \infty$$
such that in particular $Y^{(n)}_\infty \overset{d}\to Y_\infty$ as
$n\to\infty$.
\end{proposition}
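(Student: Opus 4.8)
The plan is to deduce the statement from Brandt's stability theorem \cite[Thm.~2]{brandt} by verifying that the three hypotheses it is claimed to shed --- finiteness of $E[\log|B_0^{(n)}|]$, finiteness of $E[|\log B_0|]$, and the convergence $E[\log|B_0^{(n)}|]\to E[|\log B_0|]$ --- are never used in an essential way in its proof. I would start from the backward series representation of the stationary marginal: under the standing hypotheses $E\log|A_0|<0$, $E\log^+|A_0|<\infty$, $E\log^+|B_0|<\infty$ the series $\sum_{k\ge0}\Pi_kB_k$, with $\Pi_0=1$ and $\Pi_k=A_0A_1\cdots A_{k-1}$, converges absolutely a.s.\ (by the root test, using the strong law for $k^{-1}\log|\Pi_k|$ and $k^{-1}\log^+|B_k|\to0$ a.s.) and has law $\cL(Y_\infty)$ (see \cite{brandt}), and likewise for each $n$. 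By Skorokhod's representation theorem applied to $\cL(A_0^{(n)},B_0^{(n)})\todr\cL(A_0,B_0)$, followed by an independent product construction over the index $i$, one obtains a single probability space carrying, for every $n$, an i.i.d.\ array $(\hat A_i^{(n)},\hat B_i^{(n)})_{i\in\NN_0}$ with law $\cL(A_0^{(n)},B_0^{(n)})$ and an i.i.d.\ array $(\hat A_i,\hat B_i)_{i\in\NN_0}$ with law $\cL(A_0,B_0)$ such that $(\hat A_i^{(n)},\hat B_i^{(n)})\to(\hat A_i,\hat B_i)$ a.s.\ as $n\to\infty$ for each fixed $i$. Setting $\hat Y^{(n)}_\infty=\sum_k\hat\Pi^{(n)}_k\hat B^{(n)}_k$ and $\hat Y_\infty=\sum_k\hat\Pi_k\hat B_k$, which carry the laws $\cL(Y^{(n)}_\infty)$ and $\cL(Y_\infty)$, it suffices to prove that $\hat Y^{(n)}_\infty\to\hat Y_\infty$ in probability, since then $(\hat A^{(n)}_0,\hat B^{(n)}_0,\hat Y^{(n)}_\infty)\to(\hat A_0,\hat B_0,\hat Y_\infty)$ in probability, hence in distribution, which is the claimed joint convergence.

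To estimate the difference I would split the series at a level $m$. The finitely many leading terms $\sum_{k<m}(\hat\Pi^{(n)}_k\hat B^{(n)}_k-\hat\Pi_k\hat B_k)$ tend to $0$ a.s.\ as $n\to\infty$ for fixed $m$, since each factor does; the tail $\sum_{k\ge m}\hat\Pi_k\hat B_k$ of the limiting series tends to $0$ a.s.\ as $m\to\infty$; and the remaining tail equals $\hat\Pi^{(n)}_m\hat R^{(n)}_m$, where $\hat R^{(n)}_m=\sum_{k\ge m}\big(\hat A^{(n)}_m\cdots\hat A^{(n)}_{k-1}\big)\hat B^{(n)}_k$ is independent of $\hat\Pi^{(n)}_m$ and has the same law as $Y^{(n)}_\infty$. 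The whole matter thus reduces to showing that $\hat\Pi^{(n)}_m\hat R^{(n)}_m\to0$ in probability uniformly in $n$ as $m\to\infty$, which via $P(|\hat\Pi^{(n)}_m\hat R^{(n)}_m|>\varepsilon)\le P(|\hat\Pi^{(n)}_m|>\varepsilon/M)+\sup_n P(|Y^{(n)}_\infty|>M)$ follows from: (a) a uniform-in-$n$ decay $\hat\Pi^{(n)}_m\to0$ in probability as $m\to\infty$, and (b) uniform tightness of $\{\cL(Y^{(n)}_\infty):n\in\NN\}$.

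For (a): a standard truncation argument shows that a uniform weak law of large numbers holds for the i.i.d.\ sums $\sum_{j<m}\log|\hat A^{(n)}_j|$ as soon as $\{\log|A^{(n)}_0|\}_n$ is uniformly integrable, and this uniform integrability follows from $\log|A^{(n)}_0|\todr\log|A_0|$ (note $P(A_0=0)=0$ since $E\log|A_0|>-\infty$) together with $E\bigl|\log|A^{(n)}_0|\bigr|=2E\log^+|A^{(n)}_0|-E\log|A^{(n)}_0|\to2E\log^+|A_0|-E\log|A_0|=E\bigl|\log|A_0|\bigr|<\infty$ and the classical fact that convergence in distribution plus convergence of first absolute moments forces uniform integrability. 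Since moreover $\sup_nE\log|A^{(n)}_0|\le c_0<0$ for some constant $c_0$ (by the convergence to $E\log|A_0|<0$), the uniform WLLN gives $\sup_nP(|\hat\Pi^{(n)}_m|>e^{mc_0/2})\to0$ with $e^{mc_0/2}\to0$, i.e.\ (a). Ingredient (b) is the technical core; as in Brandt's proof it is obtained from (a) together with the uniform integrability of $\{\log^+|B^{(n)}_0|\}_n$ --- which holds for the same reason, using $\log^+|B^{(n)}_0|\todr\log^+|B_0|$ and $E\log^+|B^{(n)}_0|\to E\log^+|B_0|<\infty$ --- by a truncation/contraction argument that exploits the fact that the recursion is contractive on average with a rate bounded uniformly away from $1$. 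The decisive observation is that throughout these estimates the coefficients $B^{(n)}$ enter only through $\log^+|B^{(n)}_0|$: the behaviour of $B$ near the origin is irrelevant for bounding large values of $Y_\infty$. Consequently the hypotheses on $E[\log|B^{(n)}_0|]$, $E[|\log B_0|]$ and their convergence are never invoked, and this is precisely the inspection of Brandt's proof that the statement refers to.

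The step I expect to be the main obstacle is (b), the uniform tightness. Under the stated assumptions no positive power of $Y^{(n)}_\infty$ need be integrable, so tightness cannot be extracted from a uniform moment bound; it must instead be derived by turning the contraction-on-average property $\sup_nE\log|A^{(n)}_0|\le c_0<0$ into a genuine $N$-step contraction of the random affine maps on a bounded complete metric for weak convergence, for instance $d(\mu,\nu)=\inf\{E[\min(|X-Y|,1)]:X\sim\mu,\ Y\sim\nu\}$, and controlling uniformly in $n$ how closely $\cL(A^{(n)}_0,B^{(n)}_0)$ approximates $\cL(A_0,B_0)$ in that metric. This is exactly Brandt's argument, and the only point that genuinely needs checking is that it goes through with the weakened hypotheses on $B$ --- which it does, by the observation of the preceding paragraph.
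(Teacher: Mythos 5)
Your proposal is correct and follows essentially the same route as the paper: the paper offers no proof beyond citing Brandt \cite[Thm.~2]{brandt} and asserting that the hypotheses on $E[\log|B_0^{(n)}|]$, $E[|\log B_0|]$ and their convergence can be dropped ``by an inspection of Brandt's proof'', and your argument is exactly that inspection carried out (backward series representation, Skorokhod coupling, uniform weak law for the partial products, uniform tightness of the stationary laws, with $B$ entering only through $\log^+|B_0^{(n)}|$). If anything you supply more detail than the paper does, although your step (b), the uniform tightness, is still deferred to Brandt's contraction argument rather than written out in full.
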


Due to the fact that generalized Ornstein-Uhlenbeck processes are
the conti\-nuous-time analogon of the solutions to random recurrence
equations with i.i.d. coefficients, we can use the above proposition
in our setting to obtain the following.

\begin{theorem} \label{thm-continuity}
 Let $(\xi^{(n)}, \eta^{(n)})$, $n\in\bN$, and $(\xi,\eta)$ be
bivariate L\'evy processes such that
$$(\xi_1^{(n)}, \eta_1^{(n)}) \stackrel{d}{\to} (\xi_1,\eta_1),
\quad n\to\infty.$$ Suppose there exists $\delta>0$ such that
\begin{eqnarray}
\label{contcond6} \sup_{n\in \bN} \int_{\bR \setminus [-1,1]}
(\log^+
|x|)^{1+\delta} \, \nu_{\eta^{(n)}}(dx) &<& \infty\\
 \label{contcond1}
\mbox{and}\quad  \sup_{n\in \bN}
E[|\xi_1^{(n)}|^{1+\delta}]&<&\infty.
\end{eqnarray}
Then $E \log^+ |\eta_1| < \infty$ and $E |\xi_1| < \infty$. Assume
further that
\begin{equation} \label{contcond3}
 E \xi_1>0 \quad \mbox{and}\quad  E \xi_1^{(n)}>0, \; n\in \bN.
\end{equation}
Then  $\int_0^\infty e^{-\xi_{s-}^{(n)}} \, d\eta_s^{(n)}$ converges
almost surely absolutely for each $n\in \bN$, as does $\int_0^\infty
e^{-\xi_{s-}} \, d\eta_s$, and
\begin{equation} \label{eq-8.16}
\int_0^\infty e^{-\xi_{s-}^{(n)}} \, d\eta_s^{(n)} \stackrel{d}{\to}
\int_0^\infty e^{-\xi_{s-}} \, d\eta_s, \quad n\to\infty.
\end{equation}
\end{theorem}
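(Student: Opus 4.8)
The idea is to reduce the continuous-time statement to the discrete-time result of Proposition~\ref{prop-brandt} by a suitable time-sampling of the GOU processes, together with convergence results for Lévy processes. First I would record what the two moment conditions \eqref{contcond6} and \eqref{contcond1} give us in the limit: by standard lower-semicontinuity/uniform-integrability arguments for weakly convergent infinitely divisible laws (e.g. via \cite[Thm.~8.7]{sato} and truncation), \eqref{contcond1} forces $\sup_n E|\xi_1^{(n)}|<\infty$ hence $E|\xi_1|<\infty$, and \eqref{contcond6} similarly gives $\int_{\bR\setminus[-1,1]}(\log^+|x|)^{1+\delta}\,\nu_\eta(dx)<\infty$, so in particular $E\log^+|\eta_1|<\infty$. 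Combined with \eqref{contcond3} and \cite[Thm.~2]{ericksonmaller05}, the exponential functionals $\int_0^\infty e^{-\xi_{s-}^{(n)}}\,d\eta_s^{(n)}$ and $\int_0^\infty e^{-\xi_{s-}}\,d\eta_s$ all converge a.s.\ absolutely.

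**Discretization.** Next I would introduce, for each $n$ (and for the limit), the embedded random recurrence equation at integer times. Writing $V_\infty^{(n)}=\int_0^\infty e^{-\xi_{s-}^{(n)}}\,d\eta_s^{(n)}$, the GOU structure (as in Example~\ref{example-start} and \cite{behme-et-al}, now at deterministic times $0,1,2,\dots$ rather than jump times) yields
\[
V_\infty^{(n)} \stackrel{d}{=} A_0^{(n)} (V_\infty^{(n)})' + B_0^{(n)},
\]
where $(A_i^{(n)},B_i^{(n)})_{i\in\bN_0}$ is the i.i.d.\ sequence with $A_i^{(n)}=e^{-(\xi_{i+1}^{(n)}-\xi_i^{(n)})}$ and $B_i^{(n)}=\int_{(i,i+1]}e^{-(\xi_{s-}^{(n)}-\xi_i^{(n)})}\,d\eta_s^{(n)}$, and $(V_\infty^{(n)})'$ is independent of $(A_0^{(n)},B_0^{(n)})$; and $V_\infty^{(n)}$ is the unique stationary marginal of $Y_{i+1}^{(n)}=A_i^{(n)}Y_i^{(n)}+B_i^{(n)}$. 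The same holds for the limiting process with $(A_0,B_0)$. Now I must verify the hypotheses of Proposition~\ref{prop-brandt} for these sequences: (i) $(A_0^{(n)},B_0^{(n)})\stackrel{d}{\to}(A_0,B_0)$; (ii) $E\log^+|A_0^{(n)}|\to E\log^+|A_0|$, $E\log^+|B_0^{(n)}|\to E\log^+|B_0|$, $E\log|A_0^{(n)}|\to E\log|A_0|$; and (iii) $-\infty<E\log|A_0^{(n)}|<0$, $-\infty<E\log|A_0|<0$. Point (iii) is immediate: $E\log|A_0^{(n)}|=-E\xi_1^{(n)}\in(-\infty,0)$ by \eqref{contcond3} and the finiteness of the first moment, and likewise for the limit.

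**The convergence of the coefficients.** The heart of the argument — and the step I expect to be the main obstacle — is (i) and (ii). For (i): $\xi_1^{(n)}\stackrel{d}{\to}\xi_1$ already gives convergence of $A_0^{(n)}=e^{-\xi_1^{(n)}}$, but joint convergence $(A_0^{(n)},B_0^{(n)})\stackrel{d}{\to}(A_0,B_0)$ requires controlling the stochastic integral $B_0^{(n)}=\int_{(0,1]}e^{-\xi_{s-}^{(n)}}\,d\eta_s^{(n)}$. Here I would invoke functional convergence: by \cite[Cor.~VII.3.6]{jacodshiryaev} (or an analogue), $(\xi_1^{(n)},\eta_1^{(n)})\stackrel{d}{\to}(\xi_1,\eta_1)$ implies convergence of the bivariate Lévy processes $(\xi^{(n)},\eta^{(n)})$ to $(\xi,\eta)$ in the Skorokhod topology on $D([0,1],\bR^2)$, and then stability of stochastic integration under weak convergence of the integrators — a semimartingale-convergence theorem (e.g.\ \cite[Thm.~VI.6.22]{jacodshiryaev} applied to the integrand $e^{-\xi_{s-}^{(n)}}$, which is itself a continuous functional of the driving path bounded on $[0,1]$ once we localize) — yields $(\xi^{(n)},\eta^{(n)},\int_{(0,\cdot]}e^{-\xi_{s-}^{(n)}}\,d\eta_s^{(n)})\stackrel{d}{\to}(\xi,\eta,\int_{(0,\cdot]}e^{-\xi_{s-}}\,d\eta_s)$; evaluating at time $1$ gives (i). For the moment convergences in (ii), $E\log|A_0^{(n)}|=-E\xi_1^{(n)}\to-E\xi_1$ follows from \eqref{contcond1}, which provides the uniform integrability of $(\xi_1^{(n)})$ needed to pass from $\stackrel{d}{\to}$ to convergence of means; $E\log^+|A_0^{(n)}|=E(\xi_1^{(n)})^-\to E\xi_1^-$ likewise. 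The genuinely delicate piece is $E\log^+|B_0^{(n)}|\to E\log^+|B_0|$: one needs a uniform-in-$n$ bound on $E(\log^+|B_0^{(n)}|)^{1+\delta'}$ for some $\delta'>0$ (to get uniform integrability of $\log^+|B_0^{(n)}|$), which should follow from \eqref{contcond6} and \eqref{contcond1} together with $\log^+$-moment estimates for exponential functionals over a finite horizon — essentially a version of the Erickson–Maller type bounds, or the moment bounds in \cite{behme2011}, applied on $[0,1]$ and made uniform using the two supremum conditions. Once (i)–(iii) are in place, Proposition~\ref{prop-brandt} delivers $V_\infty^{(n)}\stackrel{d}{\to}V_\infty$, which is exactly \eqref{eq-8.16}, completing the proof.
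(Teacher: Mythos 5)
Your overall strategy coincides with the paper's: reduce to the discrete-time skeleton at integer times, identify $(A_0^{(n)},B_0^{(n)})=(e^{-\xi_1^{(n)}},\int_0^1 e^{-\xi_{s-}^{(n)}}\,d\eta_s^{(n)})$, obtain the joint convergence $(A_0^{(n)},B_0^{(n)})\stackrel{d}{\to}(A_0,B_0)$ from Skorokhod convergence of the driving processes together with the stability theorem for stochastic integrals (the paper uses \cite[Thm.~VI.6.22]{JacodShiryaev} after verifying the P-UT condition, which is the hypothesis your "localize the integrand" remark is standing in for), and then invoke Proposition~\ref{prop-brandt}. The treatment of $E\log|A_0^{(n)}|$ and $E\log^+|A_0^{(n)}|$ via uniform integrability from \eqref{contcond1} is also exactly what the paper does.

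The genuine gap is the step you yourself flag as delicate: the uniform bound $\sup_n E\bigl(\log^+|B_0^{(n)}|\bigr)^{1+\delta}<\infty$ (and, by the same mechanism, $\sup_n E(\log^+|\eta_1^{(n)}|)^{1+\delta}<\infty$, which is how the paper deduces $E\log^+|\eta_1|<\infty$). You assert this "should follow" from Erickson--Maller type bounds or the moment estimates in \cite{behme2011}, but neither reference delivers it: Erickson--Maller gives convergence criteria, not quantitative moment bounds uniform over a family of triplets, and the estimates in \cite{behme2011} concern power moments $E|V_t|^p$, not fractional powers of $\log^+$. In the paper this estimate is the content of a separate result, Lemma~\ref{lem-8.5}, whose proof is not routine: one splits the integrator $L$ into a square-integrable martingale part $L^\sharp$ (small jumps plus Gaussian part, handled by Markov's inequality, Doob's maximal quadratic inequality, and a truncation of the integrand at level $\exp(x^{1/b}/2)$) and a compound-Poisson-plus-drift part $L^\flat$ (handled by the submultiplicativity of $x\mapsto(\log(x\vee e))^b$ and the argument of \cite[Thm.~25.3]{sato}), and the resulting bound is expressed explicitly in terms of $\sigma_L^2$, $\int_{|x|\leq 1}x^2\,\nu_L(dx)$, $|\gamma_L|$ and $\int_{|x|>1}(\log^+|x|)^b\,\nu_L(dx)$. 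To make the bound uniform in $n$ one then still needs $\sup_n\sigma^2_{\eta^{(n)}}<\infty$, $\sup_n\int_{|x|\leq 1}x^2\,\nu_{\eta^{(n)}}(dx)<\infty$ and $\sup_n|\gamma_{\eta^{(n)}}|<\infty$, which do not follow from \eqref{contcond6} alone but require the convergence of characteristic triplets implied by $\eta_1^{(n)}\stackrel{d}{\to}\eta_1$ (\cite[Thm.~VII.2.9]{JacodShiryaev}); your appeal to "standard lower-semicontinuity" for the first paragraph glosses over the same point. So the architecture of your proof is right, but the central quantitative estimate is missing and would have to be proved, not cited.
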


For the proof of Theorem \ref{thm-continuity} we need the following Lemma, which is of its own interest.

\begin{lemma} \label{lem-8.5}
Let $L = (L_t)_{t\geq 0}$ be a  L\'evy process in $\bR$ with
characteristic triplet $(\gamma_L, \sigma_L^2, \nu_L)$. Let $b>0$. Then
there exist universal constants $C_1, C_2, C_3 \in (0,\infty)$,
depending only on $b$, such that for every adapted c\`adl\`ag
process $H$ satisfying
$$E \left( \log^+ \sup_{0\leq s \leq 1} |H_s| \right)^b < \infty$$
the following estimate holds:
\begin{eqnarray}
\lefteqn{ E \left( \log^+ \sup_{0<s\leq 1} \left| \int_0^s H_{u-} \,
dL_u \right| \right)^b } \nonumber \\
& \leq &   C_1 \left( 1 + \sigma^2_L + \int_{|x|\leq 1} x^2 \, \nu_L(dx) +
\log^+ |\gamma_L| + \exp \left\{ C_2 \int_{|x|>1} (\log^+ |x|)^b \,
\nu_L(dx) \right\} \right) \nonumber \\
& & +  C_3 \, E\left( \log^+ \sup_{0\leq s\leq 1} |H_s| \right)^b .
\label{eq-8.12}
\end{eqnarray}
\end{lemma}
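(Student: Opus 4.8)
The plan is to prove Lemma~\ref{lem-8.5} by decomposing $L$ into its ``small jumps plus Gaussian plus drift'' part and its ``large jumps'' part, controlling the former via an $L^2$-type estimate and the latter by a direct pathwise bound. Write $L_t = G_t + J_t$, where $G$ is the L\'evy process with triplet $(\gamma_L, \sigma_L^2, \mathds{1}_{|x|\le 1}\nu_L)$ (so $G$ has finite second moment with $E[G_1^2] \le C(\sigma_L^2 + \int_{|x|\le1}x^2\,\nu_L(dx) + \gamma_L^2)$, using e.g. \cite[Thm.~25.18]{sato} and the fact that the truncated-mean drift is controlled by $\gamma_L$) and $J$ is a compound Poisson process with intensity $\lambda := \nu_L(|x|>1)$ and jump distribution $\nu_L|_{|x|>1}/\lambda$. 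By subadditivity of $x\mapsto (\log^+ x)^b$ up to a multiplicative constant $c_b$ (since $\log^+(x+y)\le \log 2 + \log^+ x + \log^+ y$), it suffices to bound $E(\log^+\sup_{0<s\le1}|\int_0^s H_{u-}\,dG_u|)^b$ and $E(\log^+\sup_{0<s\le1}|\int_0^s H_{u-}\,dJ_u|)^b$ separately.

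First I would treat the Gaussian-plus-small-jumps part. Here $t\mapsto \int_0^t H_{u-}\,dG_u$ is a local martingale plus a drift term $\int_0^t H_{u-}\gamma^0\,du$ where $\gamma^0$ is the drift of $G$ (finite since $G$ has finite variation small-jump part plus the Gaussian part). For the martingale part, by the Burkholder--Davis--Gundy inequality and the fact that $[G]_t \le t\cdot E[G_1^2]$ in expectation via the quadratic-variation formula $E[G_1^2] = \sigma_L^2 + \int_{|x|\le1}x^2\,\nu_L(dx)$, one gets $E\sup_{0<s\le1}|\int_0^s H_{u-}\,dM_u|^2 \le C\, E[G_1^2]\cdot E\sup_{0\le s\le1}|H_s|^2$. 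This is an $L^2$-bound on the supremum, which unfortunately involves $E\sup|H_s|^2$, not $E(\log^+\sup|H_s|)^b$. To convert, I would use the elementary inequality $(\log^+ x)^b \le C_b + C_b \log^+ x \le C_b + C_b \cdot \varepsilon^{-1} x^{\varepsilon}$ for any $\varepsilon\in(0,1]$: apply Markov/Jensen with a small exponent. More precisely, $E(\log^+ Y)^b \le C_b(1 + \log^+ EY^{\varepsilon}/\varepsilon)$ is false in general, so instead I would argue: split on whether $\sup|H_s|$ is large or small. On $\{\sup_{0\le s\le1}|H_s|\le 1\}$ the integral is an $L^2$ object with variance $\le C\,E[G_1^2]$, giving $E(\log^+\cdot)^b \le C(1+\log^+ E[G_1^2])$ by Jensen since $(\log^+\sqrt{x})^b$ is concave for $x$ large. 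On $\{\sup|H_s|>1\}$ we factor out: conditioning is awkward because $H$ is only adapted, but we can use the rough bound $|\int_0^s H_{u-}\,dG_u| \le \sup_{0\le u\le 1}|H_u|\cdot V_s$ where $V_s$ is the total variation of $G$ — except $G$ need not have finite variation. The clean route is: $\log^+|\int_0^s H_{u-}\,dG_u|$ — here the honest tool is that for a fixed deterministic multiplier the stochastic integral scales, but $H$ is random and adapted.

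The genuine fix, and what I expect to be the technical heart of the proof, is a \emph{stopping-time / localization argument}: for $K>0$ let $\sigma_K = \inf\{t: |H_t| > K\}$; on $[0,\sigma_K)$ the integrand is bounded by $K$, so $E\sup_{s<\sigma_K\wedge1}|\int_0^s H_{u-}\,dG_u|^2 \le C K^2 E[G_1^2]$, and then one sums over dyadic $K = 2^j$ using that $\sup|H_s| \in [2^j, 2^{j+1})$ with controlled probability weighted by $(\log^+\sup|H_s|)^b \approx (j\log 2)^b$. Summing the contributions $E\big[(\log^+(2^j\sqrt{C E[G_1^2]}))^b \mathds{1}_{\sup|H|\in[2^{j-1},2^j)}\big]$ and using $(\log^+(2^j a))^b \le C_b(j^b + (\log^+ a)^b)$ together with $\sum_j j^b P(\sup|H|\ge 2^{j-1}) \le C_b E(\log^+\sup|H|)^b$ yields exactly a bound of the form $C(1+\log^+E[G_1^2]) + C\,E(\log^+\sup_{0\le s\le1}|H_s|)^b$, which is of the required shape after noting $\log^+ E[G_1^2] \le C(1 + \sigma_L^2 + \int_{|x|\le1}x^2\,\nu_L(dx) + \log^+|\gamma_L|)$.

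Finally, the large-jumps part: $\int_0^s H_{u-}\,dJ_u = \sum_{i: \tau_i \le s} H_{\tau_i-}\Delta J_{\tau_i}$ where $0<\tau_1<\tau_2<\cdots$ are the jump times of $J$ (a Poisson process of rate $\lambda$) and $\Delta J_{\tau_i}$ are i.i.d.\ with law $\nu_L|_{|x|>1}/\lambda$. Let $N = N_1$ be the number of jumps in $[0,1]$; then $|\int_0^s H_{u-}\,dJ_u| \le N \cdot \sup_{0\le u\le 1}|H_u| \cdot \max_{i\le N}|\Delta J_{\tau_i}|$, so $\log^+$ of it is $\le \log 2 + \log^+ N + \log^+\sup|H| + \max_{i\le N}\log^+|\Delta J_{\tau_i}|$ and, raising to the $b$-th power with the $c_b$-subadditivity, $E(\log^+\sup_{0<s\le1}|\int H\,dJ|)^b \le C_b + C_b E(\log^+ N)^b + C_b E(\log^+\sup|H|)^b + C_b E\big[\sum_{i\le N}(\log^+|\Delta J_{\tau_i}|)^b\big]$. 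The first term is a constant (Poisson moments), the third is already in the desired form, and the last equals $C_b\, E[N]\cdot E[(\log^+|\Delta J_1|)^b] = C_b \int_{|x|>1}(\log^+|x|)^b\,\nu_L(dx)$ by Wald's identity — this sits inside an $\exp\{C_2\,\cdot\}$ in \eqref{eq-8.12}, which only makes the bound weaker, so it is harmless. (The exponential shape in the statement is presumably there to absorb the large-jump contribution with room to spare; a linear bound in $\int_{|x|>1}(\log^+|x|)^b\,\nu_L(dx)$ certainly implies the exponential one.) Combining the Gaussian-plus-small-jump estimate with the large-jump estimate and collecting constants gives \eqref{eq-8.12}. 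The main obstacle is the first part — converting the $L^2$ supremum bound from BDG into a $(\log^+)^b$ bound without extra integrability hypotheses on $H$ — and the dyadic decomposition over the size of $\sup_{0\le s\le 1}|H_s|$ described above is the device that resolves it.
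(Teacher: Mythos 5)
Your overall strategy is essentially the paper's: split $L$ into a square\mbox{-}integrable part and a big\mbox{-}jump compound Poisson part, control the former by a Doob/$L^2$ maximal inequality combined with truncating $H$ at a level tied to the scale being considered, and control the latter by a crude pathwise product bound plus subadditivity of $(\log^+)^b$. The paper's structural differences are minor: it puts the drift $\gamma_L$ with the big-jump part, so that $L^\sharp$ is a genuine zero-mean square-integrable martingale and Doob applies directly (note that your parenthetical that $G$ ``has finite variation small-jump part'' is wrong --- $\int_{|x|\leq 1}|x|\,\nu_L(dx)$ may be infinite, so $G$ has no drift in the finite-variation sense and you must center by the mean $EG_1=\gamma_L$ instead); and it implements the truncation of $H$ continuously inside the tail integral (split $P(\cdots>x)$ according to whether $\sup|H|\leq\exp(x^{1/b}/2)$, apply Markov and Doob to $H\wedge\exp(x^{1/b}/2)$ to get a tail of order $e^{-x^{1/b}}\,\mathrm{Var}(L_1^\sharp)$, then integrate in $x$) rather than dyadically.

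Two of your steps would fail as written. First, in the big-jump part you assert that $E(\log^+N)^b$ ``is a constant (Poisson moments)''. It is not universal: for $N\sim\mathrm{Poisson}(\lambda)$ with $\lambda=\nu_L(|x|>1)$ it grows like $(\log\lambda)^b$, and $\lambda$ is not dominated by $\int_{|x|>1}(\log^+|x|)^b\,\nu_L(dx)$, since $\log^+|x|$ vanishes as $|x|\downarrow 1$ (take $\nu_L=n\delta_{1+1/n}$). The paper avoids introducing the jump count by bounding the big-jump integral by $R_1\sup_{s}|H_s|$ with $R_1=|\gamma_L|+\sum_{s\leq 1}|\Delta L_s^\flat|$ and invoking submultiplicativity of $x\mapsto(\log(e\vee x))^b$ as in Sato's Thm.~25.3; because that submultiplicative function is bounded below by $1$, the count of big jumps is absorbed into the exponential term --- the exponential in \eqref{eq-8.12} is not ``room to spare'' over a linear bound, it is what carries $\nu_L(|x|>1)$. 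So your claim that a linear bound suffices is incorrect, and the missing $(\log^+\lambda)^b$ contribution has to be accounted for. Second, in the dyadic summation for the martingale part you sum terms $E\bigl[(\log^+(2^j\sqrt{C\,E[G_1^2]}))^b\mathds{1}_{\sup|H|\in[2^{j-1},2^j)}\bigr]$ as if the stopped integral were almost surely bounded by $2^j\sqrt{C\,E[G_1^2]}$ on that event; it is only bounded in $L^2$, so each level $j$ contributes an additional overshoot term, and you must verify that these overshoots are summable over $j$ (they are, but this is exactly the delicate point, and the paper's single global tail integral disposes of it in one stroke). Neither issue is unfixable, but both are genuine holes in the argument as proposed.
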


\begin{proof}
Write $L_t = L_t^\sharp + L_t^\flat$, where $L^\sharp =
(L_t^\sharp)_{t\geq 0}$ has characteristic triplet
$$(\gamma_L^\sharp := 0, (\sigma_L^\sharp)^2 := \sigma^2_L, \nu_L^\sharp := {\nu_L}|_{[-1,1]})$$ and $L^\flat = (L^\flat_t)_{t\geq 0}$ has characteristic triplet
$$(\gamma_L^\flat := \gamma_L, (\sigma_L^\flat)^2 := 0 , \nu_L^\flat := {\nu_L|}_{ \bR \setminus
[-1,1]}).$$ Then $L^\sharp$ has expectation zero (e.g.~\cite[Ex. 25.12]{sato}) and is a square integrable martingale, and $L^\flat$ is a
compound Poisson process together with drift $\gamma_L$. Observe
that for proving \eqref{eq-8.12} it is obviously sufficient to prove
it for $L^\sharp$ and $L^\flat$ separately, which we shall do.

For the estimate for $L^\sharp$, let $x>0$. Then
\begin{eqnarray}
\lefteqn{ P \left( \left( \log^+ \sup_{0<s\leq 1} \left| \int_0^s
H_{u-} \, dL_u^\sharp \right| \right)^b > x \right) } \nonumber \\
& = & P \left( \sup_{0<s\leq 1} \left| \int_0^s H_{u-} \,
dL_u^\sharp \right| > \exp (x^{1/b}) \right) \nonumber \\
& \leq & P \left( \sup_{0<s\leq 1}  \left| \int_0^s H_{u-} \,
dL_u^\sharp \right| > \exp (x^{1/b}), \, \sup_{0\leq s \leq 1} |H_s|
\leq \exp (x^{1/b}/2) \right) \nonumber \\
& & + P \left( \sup_{0\leq s \leq 1} |H_s|
> \exp (x^{1/b} / 2) \right). \label{eq-8.13}
\end{eqnarray}
Denote $H_s^{(x)} := H_s \wedge \exp (x^{1/b}/2)$. Then on
$\{\sup_{0\leq s\leq 1} |H_s| \leq \exp (x^{1/b}/2)\}$, $\int_0^s
H_{u-} \, dL_u^\sharp = \int_0^s H_{u-}^{(x)} \, dL_u^\sharp$ for
all $0\leq s \leq 1$, so that by Markov's inequality and Doob's
maximal quadratic inequality, we obtain
\begin{eqnarray}
\lefteqn{P \left( \sup_{0<s\leq 1}  \left| \int_0^s H_{u-} \,
dL_u^\sharp \right| > \exp (x^{1/b}), \, \sup_{0\leq s \leq 1} |H_s|
\leq \exp (x^{1/b}/2) \right)} \nonumber \\
& \leq & P \left( \sup_{0 \leq s \leq 1} \left| \int_0^1
H_{u-}^{(x)} \, dL_u^\sharp\right| > \exp (x^{1/b}) \right) \nonumber \\
& \leq & \exp (- 2 x^{1/b}) \, E \sup_{0\leq s \leq 1} \left|
\int_0^1 H_{u-}^{(x)} \, dL_u^\sharp \right|^2 \nonumber \\
& \leq & 4  \exp (- 2 x^{1/b}) \, E \left| \int_0^1 H_{u-}^{(x)} \,
dL_u^\sharp \right|^2 \nonumber \\
& = & 4  \exp (- 2 x^{1/b}) \,  \int_0^1 E |H_{u-}^{(x)}|^2 \,
dL_u^\sharp \; \mbox{\rm Var}(L_1^\sharp) \nonumber \\
& \leq & 4  \exp (- 2 x^{1/b}) \exp (x^{1/b})\; \mbox{\rm
Var}(L_1^\sharp)
\nonumber \\
& = & 4  \exp (-  x^{1/b}) \,(\sigma^2_L + \int_{|y|\leq 1} y^2\,
\nu_L(dy)), \nonumber
\end{eqnarray}
where we used  \cite[Ex. 25.12]{sato} to express the
variance $\mbox{\rm Var}(L_1^\sharp)$ in terms of the characteristic
triplet. Combining this with \eqref{eq-8.13}, we obtain
\begin{eqnarray*}
\lefteqn{ E \left( \log^+ \sup_{0<s\leq 1} \left| \int_0^s H_{u-} \,
dL_u^\sharp \right|\right)^b } \\
& \leq & 4 \left(\sigma^2_L + \int_{|y|\leq 1} y^2 \, \nu_L(dy)\right)
\int_0^\infty \exp (-x^{1/b}) \, dx + \int_0^\infty P \left( \left(
\log^+
\sup_{0\leq s \leq 1} |H_s|\right)^b > x 2^{-b} \right) \, dx \\
& = & 4 \left(\sigma^2_L + \int_{|y|\leq 1} y^2 \, \nu_L(dy)\right)
\int_0^\infty \exp (-x^{1/b}) \, dx + 2^b \, E \left( \log^+
\sup_{0\leq s \leq 1} |H_s| \right)^b,
\end{eqnarray*}
establishing \eqref{eq-8.12} for $L^\sharp$.

In order to obtain \eqref{eq-8.12} for $L^\flat$, denote
$$R_t := |\gamma_L| t + \sum_{0<s \leq t} |\Delta L_s^\flat|.$$
Then $R=(R_t)_{t\geq 0}$ is a subordinator and
\begin{eqnarray}
\lefteqn{ \left( \log^+ \sup_{0\leq s \leq 1} \left| \int_0^s H_{u-}
\, dL_u^\flat \right| \right)^b} \nonumber \\
& \leq & \left( \log^+ \left( R_1 \sup_{0\leq s \leq 1} |H_s|
\right) \right)^b \nonumber \\
& \leq & \left( \log^+ R_1 + \log^+ \sup_{0<s\leq 1} |H_s|\right)^b
\nonumber \\
& \leq & (2^{b-1} \vee 1) (\log^+ R_1)^b + (2^{b-1} \vee 1) \left(
\log^+ \sup_{0\leq s \leq 1} |H_s| \right)^b . \label{eq-8.14}
\end{eqnarray}
Since the function $x\mapsto (\log (x\vee e))^b$ is
submultiplicative (cf. Sato~\cite[Prop. 25.4]{sato}), it follows
from the proof of Theorem~25.3 in Sato~\cite{sato} that there is a
constant $C_2 = C_2(b)$, depending only on $b$, such that
\begin{equation*}
E \left( \log \left( e \vee \sum_{0<s\leq 1} |\Delta L_s^\flat|
\right)\right)^b \leq \exp \left\{ C_2 \int_{|x|>1} (\log^+ |x|)^b
\, \nu_L(dx) \right\}.
\end{equation*}
Hence, there is a constant $C_4 = C_4(b) \in (0,\infty)$ such that
\begin{eqnarray}
\lefteqn{ E (\log^+ R_1)^b } \nonumber \\
& \leq & 1 + E (\log (e \vee R_1))^b \nonumber  \\
& \leq & C_4 \left( 1 + (\log^+ |\gamma_L|)^b + \exp \left\{ C_2
\int_{|x|>1} (\log^+ |x|)^b \, \nu_L(dx) \right\} \right). \nonumber
\end{eqnarray}
Together with \eqref{eq-8.14} this gives \eqref{eq-8.12} for
$L^\flat$.
\end{proof}

\begin{proof}[Proof of Theorem \ref{thm-continuity}]
Recall that for any real numbers $a$ and $b$ and $\delta>0$ it holds
$$|a+b|^{1+\delta}\leq C_\delta (|a|^{1+\delta}+|b|^{1+\delta})$$
for some constant $C_\delta$. Using this together with Doob's martingale inequality (c.f. \cite[Eq. (25.16)]{sato}) and Jensen's inequality we obtain
\begin{eqnarray*}
 E[\sup_{0<s\leq 1} |\xi_s^{(n)}|^{1+\delta} ]
&\leq& C_\delta \left( E[\sup_{0<s\leq 1} |\xi_s^{(n)}-sE[\xi_1^{(n)}] |^{1+\delta}] +  |E[\xi_1^{(n)}]|^{1+\delta}\right)\\
&\leq& C_\delta \left( 8 E[|\xi_1^{(n)}-E[\xi_1^{(n)}] |^{1+\delta}] +  |E[\xi_1^{(n)}]|^{1+\delta}\right)\\
&\leq& 8 C_\delta^2 E[|\xi_1^{(n)}|^{1+\delta}] + (8C_\delta^2+C_\delta) |E[\xi_1^{(n)}]|^{1+\delta}\\
&\leq& (16C_\delta^2+C_\delta) E[|\xi_1^{(n)}|^{1+\delta}].
\end{eqnarray*}
Hence from \eqref{contcond1} we conclude
\begin{equation} \label{contcond5b}
 \sup_{n\in\bN} E [(\sup_{0<s\leq 1} |\xi_s^{(n)}|)^{1+\delta} ]<\infty
\end{equation}
and therefore also
\begin{equation} \label{contcond5}
 \sup_{n\in\bN} E [(\sup_{0<s\leq 1} |\xi_s^{(n)}\vee 0|)^{1+\delta} ]<\infty .
\end{equation}
Denote by $(\gamma_{\eta^{(n)}}, \sigma^2_{\eta^{(n)}},
\nu_{\eta^{(n)}})$ and $(\gamma_\eta,\sigma_\eta^2,\nu_\eta)$ the
characteristic triplets of $\eta^{(n)}$ and $\eta$, respectively.
Denote by $h$ the continuous truncation function $h(x) = x
\mathbf{1}_{|x|\leq 1} + (2-|x|) \mbox{sgn}(x) \, \mathbf{1}_{|x|\in
(1,2]}$. Set
$$\beta_{\eta^{(n)}} := \gamma_{\eta^{(n)}} + \int_{[-2,2]} (h(x) - x
\mathbf{1}_{|x| \leq 1}) \, \nu_{\eta^{(n)}} (dx) =
\gamma_{\eta^{(n)}} + \int_{[-2,2]} x\left(\frac{h(x)}{x} -
\mathbf{1}_{|x| \leq 1}\right) \, \nu_{\eta^{(n)}} (dx),$$ i.e. the
constant term in the L\'evy-Khintchine triplet of $\eta^{(n)}$ with
respect to the truncation function $h$ (c.f. \cite[Eqs. (8.5),
(8.6)]{sato}). Define $\beta_\eta$ similarly. Since $\eta^{(n)}_1
\stackrel{d}{\to} \eta_1$, it follows from \cite[Thm. VII.2.9, p.396]{JacodShiryaev} that $\beta_{\eta^{(n)}} \to
\beta_\eta$,
\begin{eqnarray*}
 \lefteqn{\sigma^2_{\eta^{(n)}} + \int_{|x|\leq 1} x^2 \nu_{\eta^{(n)}} (dx)
+ \int_{1< |x| \leq 2} (2-|x|)^2 \, \nu_{\eta^{(n)}} (dx) }\\
&\to& \sigma^2_{\eta} + \int_{|x|\leq 1} x^2 \nu_{\eta} ( dx) + \int_{1<
|x| \leq 2} (2-|x|)^2 \, \nu_{\eta} (dx)
\end{eqnarray*}
and $\int_{\bR} f(x) \,
\nu_{\xi^{(n)}} (dx) \to \int_{\bR} f(x) \, \nu_\xi (dx)$ as
$n\to\infty$ for every continuous bounded function $f$ vanishing in
a neighbourhood of zero. In particular,
\begin{equation} \label{eq-8.17}
\sup_{n\in \bN} \sigma^2_{\eta^{(n)}} < \infty,
\quad \sup_{n\in \bN} \int_{[-1,1]} x^2 \, \nu_{\eta^{(n)}} (dx) <
\infty \quad \mbox{and} \quad \sup_{n\in \bN} |\gamma_{\eta^{(n)}}|
< \infty.
\end{equation}
Applying Lemma~\ref{lem-8.5} with $b=1+\delta$ and $H_s = 1$ and
using \eqref{contcond6} then shows that $\sup_{n\in \bN} E (\log^+
|\eta_1^{(n)}|)^{1+\delta}<\infty$, and hence that $E (\log^+
|\eta_1|)^{1+\delta} < \infty$ by Fatou's lemma for weak convergence
(cf. Kallenberg~\cite[Lem.~4.11]{kallenberg}), and similarly we
obtain $E |\xi_1|^{1+\delta} < \infty$ from \eqref{contcond5b}.

Since $E \log^+|\eta_1| < \infty$, $E \xi_1 > 0$, $E \log^+
|\eta_1^{(n)}| < \infty$
and $E \xi_1^{(n)} > 0$, the integrals $\int_0^\infty e^{-\xi_{s-}}
\, d\eta_s$ and
 $\int_0^\infty e^{-\xi_{s-}^{(n)}} \, d\eta_s^{(n)}$ converge
almost surely absolutely (cf. \cite[Thm. 2]{ericksonmaller05}).
Writing
$$\int_0^\infty e^{-\xi_{s-}^{(n)}} \, d\eta_s^{(n)} = \int_0^1
e^{-\xi_{s-}^{(n)}} \, d\eta_s^{(n)} + e^{-\xi_1^{(n)}}
\int_1^\infty e^{-(\xi_{s-}^{(n)} - \xi_1^{(n)})} \, d(\eta_s^{(n)}
- \eta_1^{(n)}),$$ we have
$$\int_0^\infty e^{-\xi_{s-}^{(n)}} \, d\eta_s^{(n)} \stackrel{d}{=}
\sum_{k=0}^\infty \left( \prod_{i=0}^{k-1} A_i^{(n)} \right)
B_k^{(n)}$$ with some i.i.d. sequences $(A_k^{(n)}, B_k^{(n)})_{k\in
\bN_0}$ such that
$$(A_0^{(n)}, B_0^{(n)}) {=} (e^{-\xi_1^{(n)}}, \int_0^1
e^{-\xi_{s-}^{(n)}} \, d\eta_s^{(n)}),$$ and a similar statement
holds for $\int_0^\infty e^{-\xi_{s-}} \, d\eta_s$ with
$(A_k,B_k)_{k\in \bN_0}$ i.i.d. such that $(A_0,B_0) = (e^{-\xi_1},
\int_0^1 e^{-\xi_{s-}} \, d\eta_s)$.

Now, to apply Proposition \ref{prop-brandt}, we have to check its
conditions on the sequences $(A_0^{(n)})_{n\in\NN}$ and
$(B_0^{(n)})_{n\in\NN}$ which we shall do in the following.

Since $(\xi_1^{(n)}, \eta_1^{(n)}) \stackrel{d}{\to}
(\xi_1,\eta_1)$, $n\to\infty,$ it follows from
\cite[Cor. VII.3.6, p. 415]{JacodShiryaev} that $(\xi^{(n)},\eta^{(n)})
\stackrel{\cL}{\to} (\xi,\eta)$, where ``$\stackrel{\cL}{\to}$''
denotes convergence in the Skorokhod topology. Additionally, the
sequences $(\xi^{(n)}), n\in\bN$, and $(\eta^{(n)}), n\in \bN$,
satisfy the P-UT condition (cf.
\cite[Def.~VI.6.1, p.~377]{JacodShiryaev}). To see this, let $h:\bR^2 \to \bR^2$
be a continuous bounded function satisfying $h(x) = x$ in a
neighbourhood of 0. Then if $(\gamma_h,A,\nu)_h$ is the
L\'evy-Khintchine triplet of $(\xi,\eta)$ with respect to $h$ (we
use the notations here as in \cite[Eq. II.4.21,
p.~107]{JacodShiryaev}), then $(\gamma_h t, At, dt \, \nu(dx))$,
$t\geq 0$, is the semimartingale characteristic of $(\xi,\eta)$ with
respect to $h$, cf.  \cite[Cor.~II.4.19, p. 107]{JacodShiryaev}. A
similar statement holds for $(\xi^{(n)},\eta^{(n)})$. Since
$(\xi^{(n)}, \eta^{(n)}) \stackrel{\cL}{\to} (\xi,\eta)$ as
$n\to\infty$, the sequence $(\xi^{(n)}, \eta^{(n)})$, $n\in\bN$, is
tight. Furthermore, since again by \cite[Cor. VII.3.6, p.
415]{JacodShiryaev}, $\gamma^{(n)}_h \to \gamma_h$ as $n\to\infty$,
and since the total variation of $s\mapsto \gamma^{(n)}_h s$ on
$[0,t]$ is $|\gamma^{(n)}_h| t$, condition (iii) of \cite[Thm. VI.6.15, p. 380]{JacodShiryaev} is satisfied, and it follows from
 \cite[Thm. VI.6.21, p. 382]{JacodShiryaev} that
$(\xi^{(n)},\eta^{(n)})$, $n\in \bN$, is P-UT. Then also
$(\eta^{(n)})_{n\in \bN}$ is P-UT (cf. \cite[Eq. VI.6.3, p.
377]{JacodShiryaev}).\\
From \cite[Thm.~VI.6.22, p. 383]{JacodShiryaev} it now follows
that
\begin{equation} \label{eq-8.8}
(\xi^{(n)}, \eta^{(n)}, \int_0^{\cdot} e^{-\xi_{s-}^{(n)}} \,
d\eta_s^{(n)}) \stackrel{\cL}{\to} (\xi,\eta,\int_0^{\cdot}
e^{-\xi_{s-}} \, d\eta_s), \quad n\to\infty,
\end{equation}
in the Skorokhod topology. Since none of the components has a
discontinuity at fixed $t\geq 0$ with positive probability, this
implies
\begin{equation}\label{brandtcond1}
 (A_0^{(n)}, B_0^{(n)})\overset{d}\to (A_0,B_0), \quad n\to \infty.
\end{equation}

By assumption we have $\log|A_0^{(n)}| = -\xi_1^{(n)}
\stackrel{d}\to -\xi_1 = \log|A_0|$. Since additionally the sequence
$(\log|A_0^{(n)}|)_{n\in\NN}$ is uniformly integrable by
\eqref{contcond1} (see e.g. \cite[Condition (3.18)]{billingsley}),
this yields by \cite[Thm. 3.5]{billingsley}
\begin{equation}\label{brandtcond2}
 E[\log |A_0^{(n)}|] \to E[\log|A_0|], \quad n\to \infty.
\end{equation}
Since \eqref{contcond5} implies $\sup_n E[|\log^+
|A_0^{(n)}||^{1+\delta}]<\infty$ we obtain similarly
\begin{equation}\label{brandtcond3}
 E[\log^+ |A_0^{(n)}|] \to E[\log^+|A_0|], \quad n\to \infty.
\end{equation}
Also, it is obvious that \eqref{contcond3} and \eqref{contcond1}
yield
\begin{equation}\label{brandtcond4}
-\infty<E[\log |A_0|]<0 \quad \mbox{and} \quad -\infty<E[\log
|A_0^{(n)}|]<0.
\end{equation}
Finally, observe that \eqref{contcond5} implies
\begin{equation*} \label{eq-cont1}
 \sup_{n\in \bN}  E[\log^+ \sup_{0<s\leq 1} |e^{-\xi_s^{(n)}}|]^{1+\delta}<\infty
\end{equation*}
which, together with \eqref{contcond6} and \eqref{eq-8.17}, yields
by Lemma \ref{lem-8.5}
$$\sup_{n\in \bN} E [\log^+ |B_0^{(n)}|]^{1+\delta} < \infty.$$
Again, this gives $E [\log^+ |B_0|]^{1+\delta} < \infty$ and
\begin{equation}\label{brandtcond5}
 E[\log^+ |B_0^{(n)}|] \to E[\log^+|B_0|]<\infty, \quad n\to \infty.
\end{equation}
Now, by Proposition \ref{prop-brandt} we obtain the stated result
from \eqref{brandtcond1}, \eqref{brandtcond2}, \eqref{brandtcond3},
\eqref{brandtcond4} and \eqref{brandtcond5}.
\end{proof}

From the above theorem we immediately obtain the following corollary
on injectivity of $\Phi_\xi$ and $\tilde{\Phi}_\eta$. Observe that
the conditions in part $(i)$ have been violated in Example
\ref{ex-2}.

\begin{corollary}\begin{enumerate}
\item Let $(\xi_t)_{t\geq 0}$ be a L\'evy process such that $E[\xi_1]>0$ and $E[|\xi_1|^{1+\delta}]<\infty$ for some $\delta>0$.
Let $(\eta^{(n)})_{n\in\NN}$ be a sequence of L\'evy processes such
that $\eta_1^{(n)}\overset{d}\to \eta_1$ as $n\to\infty$,
$\cL(\eta^{(n)}_1)\in D_\xi$, $\cL(\eta_1)\in D_\xi$ and
$$\sup_{n\in\NN} \int_{|x|>1} (\log^+|x|)^{1+\delta} \nu_{\eta^{(n)}}(dx)<\infty.$$
Then $\Phi_\xi(\cL(\eta^{(n)}_1)) \overset{w}\to \Phi_\xi(\cL(\eta_1))$ as
$n\to\infty$.
\item Let $(\eta_t)_{t\geq 0}$ be a L\'evy process such that $E[\log^+|\eta_1|^{1+\delta}]<\infty$ for some $\delta>0$.
Let $(\xi^{(n)})_{n\in\NN}$ be a sequence of L\'evy processes such
that $\xi_1^{(n)}\overset{d}\to \xi_1$ as $n\to\infty$,
$\cL(\xi^{(n)}_1)\in \tilde{D}_\eta$, $\cL(\xi_1)\in \tilde{D}_\eta$, $E[\xi_1^{(n)}]>0$, $E[\xi_1]>0$
and
$$\sup_{n\in\NN} E[|\xi_1^{(n)}|^{1+\delta}]<\infty.$$
Then $\tilde{\Phi}_\eta(\cL(\xi^{(n)}_1)) \overset{w}\to
\tilde{\Phi}_\eta(\cL(\xi_1))$ as $n\to\infty$.
\end{enumerate}
\end{corollary}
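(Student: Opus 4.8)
The plan is to obtain both assertions as immediate specializations of Theorem~\ref{thm-continuity}, taking one of the two driving sequences to be constant. For part (i) I would set $\xi^{(n)} := \xi$ for every $n$. Realizing $\xi$ and $\eta^{(n)}$ (resp.\ $\xi$ and $\eta$) as independent L\'evy processes on a common probability space --- which is implicit in the definition of $D_\xi$ --- the bivariate one-dimensional marginal law is the product $\cL(\xi_1^{(n)},\eta_1^{(n)}) = \cL(\xi_1)\otimes\cL(\eta_1^{(n)})$, and since the product of a fixed probability measure with a weakly convergent sequence of probability measures converges weakly, we get $(\xi_1^{(n)},\eta_1^{(n)})\stackrel{d}{\to}(\xi_1,\eta_1)$ from $\eta_1^{(n)}\stackrel{d}{\to}\eta_1$. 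Condition \eqref{contcond6} of Theorem~\ref{thm-continuity} is exactly the assumed bound $\sup_n\int_{|x|>1}(\log^+|x|)^{1+\delta}\,\nu_{\eta^{(n)}}(dx)<\infty$ (note that $\bR\setminus[-1,1]=\{|x|>1\}$), while \eqref{contcond1} and \eqref{contcond3} reduce to $E[|\xi_1|^{1+\delta}]<\infty$ and $E\xi_1>0$, both of which are assumed. Theorem~\ref{thm-continuity} then yields the almost sure convergence of all the exponential functionals involved together with $\int_0^\infty e^{-\xi_{s-}}\,d\eta_s^{(n)}\stackrel{d}{\to}\int_0^\infty e^{-\xi_{s-}}\,d\eta_s$, which is precisely $\Phi_\xi(\cL(\eta_1^{(n)}))\overset{w}\to\Phi_\xi(\cL(\eta_1))$.

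For part (ii) I would symmetrically set $\eta^{(n)} := \eta$ for all $n$; the joint convergence $(\xi_1^{(n)},\eta_1)\stackrel{d}{\to}(\xi_1,\eta_1)$ follows in the same way from $\xi_1^{(n)}\stackrel{d}{\to}\xi_1$ and independence. Conditions \eqref{contcond1} and \eqref{contcond3} are then the hypotheses $\sup_n E[|\xi_1^{(n)}|^{1+\delta}]<\infty$, $E\xi_1^{(n)}>0$ and $E\xi_1>0$. For \eqref{contcond6} one has to pass from the assumed $\log^+$-moment of $\eta_1$ to the corresponding integrability of $\nu_\eta$: since $x\mapsto(\log(e\vee|x|))^{1+\delta}$ is submultiplicative, \cite[Thm.~25.3]{sato} gives $E[(\log^+|\eta_1|)^{1+\delta}]<\infty\iff\int_{|x|>1}(\log^+|x|)^{1+\delta}\,\nu_\eta(dx)<\infty$, so that $\sup_n\int_{|x|>1}(\log^+|x|)^{1+\delta}\,\nu_{\eta^{(n)}}(dx)<\infty$. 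Theorem~\ref{thm-continuity} then delivers $\tilde{\Phi}_\eta(\cL(\xi_1^{(n)}))\overset{w}\to\tilde{\Phi}_\eta(\cL(\xi_1))$.

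I do not expect a genuine obstacle beyond bookkeeping here; the only two points that deserve a line of care are (a) putting the two driving processes on a common probability space as \emph{independent} processes, so that the bivariate one-dimensional marginals are products and the joint weak convergence demanded by Theorem~\ref{thm-continuity} is automatic, and (b) the equivalence just mentioned between the $\log^+$-moment hypothesis on $\eta_1$ in part (ii) and the integrability condition on $\nu_\eta$ appearing in \eqref{contcond6}. Once these are noted, both parts are direct corollaries of Theorem~\ref{thm-continuity}.
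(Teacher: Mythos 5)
Your proposal is correct and coincides with the paper's own (essentially unstated) argument: the paper derives the corollary directly from Theorem~\ref{thm-continuity} by freezing one of the two driving sequences, exactly as you do. The two points you flag — joint weak convergence of the product laws under independence, and the equivalence via \cite[Thm.~25.3]{sato} between the $\log^+$-moment of $\eta_1$ and the integrability condition \eqref{contcond6} on $\nu_\eta$ — are precisely the bookkeeping the paper leaves implicit.
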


\section*{Acknowledgement}
The authors would like to thank Makoto Maejima for fruitful
discussions which initiated the investigations of this paper.

\end{document}